\newtheorem{theorem}{Theorem}
\newtheorem{lemma}{Lemma}
\newtheorem{corollary}{Corollary}
\newtheorem{conjecture}{Conjecture}
\newtheorem{proposition}[theorem]{Proposition}%
\DeclareMathOperator{\tr}{tr}
\newtheorem{Remark}{Remark}
\newtheorem{example}{Example}%
\newtheorem{definition}{Definition}%
\begin{document}

\title{Exploring unique dynamics in a predator-prey model with generalist predator and group defence in prey}

\author[1]{\fnm{Vaibhava} \sur{Srivastava}}\email{vaibhava@iastate.edu}
\equalcont{These authors contributed equally to this work.}

\author*[2]{\fnm{Kwadwo } \sur{Antwi-Fordjour}}\email{kantwifo@samford.edu}
\equalcont{These authors contributed equally to this work.}

\author[1]{\fnm{Rana D.} \sur{ Parshad}}\email{rparshad@iastate.edu}
\equalcont{These authors contributed equally to this work.}

\affil*[1]{\orgdiv{Department of Mathematics}, \orgname{Iowa State University}, \orgaddress{\street{Ames}, \postcode{50011}, \state{IA}, \country{USA}}}

\affil[2]{\orgdiv{Department of Mathematics and Computer Science}, \orgname{Samford University}, \orgaddress{ \city{Birmingham}, \postcode{35229}, \state{AL}, \country{USA}}}


\abstract{In the current manuscript we consider a predator-prey model where the predator is modeled as a generalist using a modified Leslie-Gower scheme, and the prey exhibits group defence via a generalised response. We show that the model could exhibit finite time blow-up, contrary to the current literature (Eur. Phys. J. Plus 137, 28). 
We also propose a new concept via which the predator population blows up in finite time while the prey population \emph{quenches} in finite time. The blow-up and quenching times are proved to be one and the same. Our analysis is complemented by numerical findings. This includes a numerical description of the basin of attraction for large data blow-up solutions, as well as several rich bifurcations leading to multiple limit cycles, both in co-dimension one and two. 
Lastly, we posit a delayed version of the model with globally existing solutions for any initial data.}

\keywords{Predator-prey model, Finite time blowup, Quenching, Basin of attraction}

\maketitle

\section{Introduction}\label{sec1}

The current work aims to comment on the dynamics of the predator-prey model introduced in \cite{patra2022effect}. This model takes the following form,

\begin{equation}
	\left\{\begin{array}{l}
		\displaystyle\frac{d X}{d t}=R X\left(1-\displaystyle\frac{X(t-\tau)}{K}\right) - \dfrac{M X Y}{X^p + C}\vspace{2ex}\\
		\displaystyle\frac{d Y}{d t}=\left( D- \dfrac{E}{X + A} \right) Y^2
	\end{array}\right.
	\label{model}
\end{equation}
This system of equations represents the dynamics of an interacting predator and prey species, where $X(t)$ and $Y(t)$ are a prey and predator population, respectively. The prey population obeys logistic growth, with a delay term representing gestation effect \cite{Kuang93}. The prey population's functional response is non-standard, taking the form $\frac{M X }{X^p + C}$, where $p\geq 1$. If $p=1$, this is the classical Holling type II response. If $p = 2$, it is the Holling type IV response. If $1<p<2$, it is called the Cosner response \cite{Cosner99}. The predator population is modeled according to the modified Leslie-Gower scheme \cite{Upa-Iye-Rai}. The other parameters used in the model have the following interpretations: $R$ and $K$ are the intrinsic growth rate and carrying capacity of the prey population, $M$ is the maximum predation rate, $C$ represents the protection provided to the prey by the environment, $D$ is the reproduction rate of the generalist predator by sexual reproduction, $E$ is the maximum rate of death of predator population, and $A$ represents the residual loss of the predator population due to the scarcity of prey species $X$.

 In the event that there is no delay, or $\tau = 0$, the model is a subsystem of the 
well-known Upadhyay-Rai food chain model that models the interaction between three trophic groups, a prey species $x$ is depredated on by a middle predator species $y$ that in turn is depredated on by a top predator species $z$, \cite{Upa-Iye-Rai}. The top predator ($Y(t)$ here) is assumed to be a generalist. That is, it can switch its food source in the absence of its favorite food $X$. The growth of this top predator $Y$ is modeled in a nonstandard way via the modified Leslie--Gower scheme. Here the $Y$ population grows due to sexual reproduction - essentially modeled as a direct product between the males and females in the population, $\frac{Y}{2} \times \frac{Y}{2} = \frac{1}{4}Y^{2}$, which is controlled by a density-dependent, growth rate $\left( D- \frac{E}{X + A} \right) $. This term is positive if the food source, $X$, is above a certain threshold, and in this case, the top predator population grows, and if below this threshold, it decays. 

The Upadhyay-Rai model has a long-standing history in the mathematical ecology literature. It is the first model to show chaotic dynamics when the top predator is a generalist - and the second after the Hastings-Powell model \cite{HP91}, to show chaos in a three-species food chain model. Boundedness for the model was first shown in \cite{Ala}. However, in 2015, the Upadhyay-Rai model was shown to possess solutions that blow up in finite time for sufficiently large initial data \cite{Par-Kum-Kou}, and then subsequently even for small initial data \cite{Par-Qua-Bea-Kou}. Since then, many works have appeared in the literature that has attempted to either dampen the blow-up dynamic present in the basic model via  ecological mechanisms such as time delay, fear effects, or refuge \cite{PQB16, VAP21}, while others have attempted to show that blow-up does not occur altogether, via mechanisms, such as toxins, Allee effects, and fractional derivatives \cite{MR16, DS20}.

The following results concerning the properties of system \eqref{model} are proved in \cite{patra2022effect}. 

\begin{lemma}[Lemma 3.1 \cite{patra2022effect}]
\label{lem:bdd_patra}
The positive quadrant $\mathbb{R}^{2}$ is invariant for system \eqref{model}.
\end{lemma}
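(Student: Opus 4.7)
The plan is to verify positive invariance by checking that the vector field is non-outward on the boundary of the first quadrant, and more concretely, by rewriting each equation so that the component is expressed as its initial value multiplied by a strictly positive factor. This is the standard Kamke/flow-based argument for positivity of ODE (and DDE) systems.

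First I would fix an interval $[0,T]$ on which a (classical) solution is assumed to exist, together with a continuous, nonnegative initial history $X(s)\geq 0$ for $s\in[-\tau,0]$ and $Y(0)\geq 0$. The prey equation can be written as
\begin{equation*}
\frac{dX}{dt} = X(t)\,\phi(t),\qquad \phi(t):=R\!\left(1-\frac{X(t-\tau)}{K}\right) - \frac{MY(t)}{X(t)^{p}+C},
\end{equation*}
and since $\phi$ is continuous on $[0,T]$, an integrating factor (or Gr\"onwall-type identity) yields
\begin{equation*}
X(t) = X(0)\exp\!\left(\int_{0}^{t}\phi(s)\,ds\right),
\end{equation*}
so $X(t)\geq 0$ for all $t\in[0,T]$, with strict positivity whenever $X(0)>0$.

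Second, I would handle the predator equation $\dot Y = Y^{2}\big(D - E/(X+A)\big)$ in the same spirit. Since the right-hand side vanishes at $Y=0$, the constant function $Y\equiv 0$ is a solution, and by uniqueness for the ODE in $Y$ (with $X$ treated as a given continuous coefficient, which is locally Lipschitz in $Y$), no trajectory starting from $Y(0)>0$ can cross the line $Y=0$. Equivalently, separating variables gives
\begin{equation*}
\frac{1}{Y(t)} = \frac{1}{Y(0)} - \int_{0}^{t}\!\!\left(D - \frac{E}{X(s)+A}\right)\,ds,
\end{equation*}
which is well defined (and positive) on the maximal interval where the right-hand side stays positive, confirming $Y(t)>0$ there. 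Combining the two cases, any trajectory that starts in the open first quadrant remains there, and the closed axes are individually invariant, giving invariance of $\mathbb{R}^{2}_{+}$.

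The main obstacle, such as it is, is essentially bookkeeping rather than analysis: one must use the delayed-initial-data formulation so that $\phi(t)$ is well defined on $[0,\tau]$, and one should be careful to note that this lemma only asserts \emph{invariance} of the positive quadrant on the interval of existence, not global existence itself, because the quadratic factor $Y^{2}$ in the predator equation leaves open the possibility of blow-up in finite time (which is precisely what the authors later exploit in the main results).
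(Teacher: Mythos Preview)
The paper does not supply a proof of this lemma: it is quoted verbatim from \cite{patra2022effect} and then \emph{contradicted}. Immediately after Theorem~\ref{thm:thm2} (finite-time blow-up of $Y$), the authors state Corollary~\ref{cor:bdd_patra11}: ``The positive quadrant $\mathbb{R}^{2}$ is not invariant for system \eqref{model}.'' So there is nothing in the paper for you to match; the paper's stance is that the lemma, as stated, is false.

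Your argument is the standard and correct proof that $\mathbb{R}^{2}_{+}$ is positively invariant \emph{on the maximal interval of existence}: the factorisation $\dot X = X\,\phi$ gives $X(t)=X(0)\exp\bigl(\int_0^t\phi\bigr)>0$, and the $Y$-equation has $Y\equiv 0$ as an invariant line which trajectories cannot cross by uniqueness. You even flag the relevant caveat in your last paragraph --- invariance on the interval of existence says nothing about whether that interval is infinite. The discrepancy with the paper is therefore semantic. The authors evidently read ``invariant'' as entailing global-in-time existence of solutions within the quadrant, so that finite-time blow-up counts as a failure of invariance; under the conventional definition you use, blow-up to $+\infty$ does not take the trajectory out of $\mathbb{R}^{2}_{+}$, and the lemma holds. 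There is no mathematical error in your proposal --- just be aware that in this paper the statement is being cited as a foil to be overturned, not as a result to be established.
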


\vspace{2mm}

\begin{theorem}[Theorem 3.2 \cite{patra2022effect}]\label{thm:bdd_patra}
    The solution of system \eqref{model}, originating in $\mathbb{R}^2_{+}$ are bounded, provided the conditions
    \begin{align}\label{res}
        \mu < \dfrac{D X^*}{(X^{*})^p + C} \quad \& \quad D- \dfrac{E}{X^* + A} < \dfrac{D}{M}\Big( \dfrac{X^*}{Y^*}\Big)^2
    \end{align}
    holds, where $(X^*,Y^*)$ is the interior equilibrium point and $\mu=\min \{ M,E\}.$
\end{theorem}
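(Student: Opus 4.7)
The plan is to combine the invariance result of Lemma~\ref{lem:bdd_patra} with a two-step bounding argument: first control the prey population $X(t)$ using a comparison principle, and then bound the predator population $Y(t)$ via a Lyapunov-type functional that exploits both hypotheses in \eqref{res}.

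For the prey, I would discard the nonpositive predation term and reduce the $X$-equation to the logistic inequality $\dot X \leq R X (1 - X(t-\tau)/K)$. In the nondelayed case $\tau = 0$ this yields, by direct comparison with the logistic ODE, that $\limsup_{t\to\infty} X(t) \leq K$; consequently the prey enters and remains in an interval $[0, \overline{X}]$ with $\overline{X} = \max\{X(0), K\}$. For $\tau > 0$ the same conclusion should follow from a delayed-logistic comparison argument, provided the initial history is bounded (which is the standing hypothesis).

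With an a~priori upper bound on $X$ in hand, the next step is to bound $Y$. A pointwise estimate alone is hopeless because $\dot Y = (D - E/(X+A)) Y^2$ is of Riccati type and admits $\dot Y \leq D Y^2$ whenever $X$ is large; one must use a genuine \emph{coupling} with the prey. The natural candidate is a linear functional $V(t) = \alpha X(t) + \beta Y(t)$, or a logarithmic variant such as $X + \beta \ln Y$, with weights $\alpha,\beta > 0$ tuned so that the prey-consumption term $-\alpha M X Y / (X^p + C)$ absorbs a multiple of the quadratic term $\beta (D - E/(X+A)) Y^2$. Here the first condition $\mu < D X^*/((X^*)^p + C)$ compares the effective functional response at $X^*$ to the growth coefficient $D$, while the second condition $D - E/(X^*+A) < (D/M)(X^*/Y^*)^2$ is engineered exactly so that, evaluated at the interior equilibrium, the $Y^2$ contribution to $\dot V$ is strictly dominated by the interaction term. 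If one can then close a differential inequality of the form $\dot V + \lambda V \leq C_0$, Gronwall yields $\limsup_{t\to\infty} V(t) \leq C_0/\lambda$, and hence both $X$ and $Y$ are bounded.

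The step I expect to be the principal obstacle is precisely the control of the $Y^2$ term. Because $D - E/(X+A)$ is not sign-definite globally, being strictly positive once $X > E/D - A$, the cancellation provided by the predation term in any linear Lyapunov combination is only \emph{local}: far from the interior equilibrium, and in particular in the large-$Y$ regime, the quadratic growth in $Y$ threatens to overwhelm the dissipation supplied by the prey equation. Upgrading an estimate that holds near $(X^*, Y^*)$ to one valid on all of $\mathbb{R}^2_+$ is therefore the delicate point, and it is not obvious that a condition formulated purely in terms of equilibrium values can suffice; one should inspect carefully whether \eqref{res} genuinely closes the global estimate or whether an unstated restriction on the initial data is implicitly being used.
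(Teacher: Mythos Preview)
Your suspicion in the final paragraph is exactly right, and it is the whole story: the theorem as stated is \emph{false}, and the paper does not prove it. Theorem~\ref{thm:bdd_patra} is quoted from \cite{patra2022effect} only so that the present manuscript can refute it. The paper's Theorem~\ref{thm:thm2} establishes finite-time blow-up of $Y(t)$ for sufficiently large initial data regardless of any parametric restriction of the form \eqref{res}, and the Remark immediately following Corollary~\ref{cor:bdd_patra11} exhibits an explicit parameter set ($R=1$, $K=1$, $M=1.2$, $p=2$, $D=0.5$, $E=0.2$, $A=0.2$, $C=0.3$) that satisfies both inequalities in \eqref{res} yet yields blow-up from the initial data $(X_0,Y_0)=(78,30)$.

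The mechanism is precisely the one you flagged as the principal obstacle. The predator equation $\dot Y = (D - E/(X+A))Y^2$ is Riccati in $Y$ with a coefficient that is strictly positive whenever $X > E/D - A$. For large $X_0$ this coefficient stays bounded below by a positive constant on a short time interval, and the resulting inequality $\dot Y \geq \delta Y^2$ forces blow-up before the prey can decay enough to flip the sign. No linear or logarithmic Lyapunov functional $V = \alpha X + \beta Y$ can absorb this: the predation term contributes at most $O(XY)$ to $\dot V$, which is $o(Y^2)$ along trajectories with bounded $X$ and growing $Y$. The conditions \eqref{res} are stated purely in terms of the equilibrium $(X^*,Y^*)$ and carry no information about the global phase portrait, so they cannot prevent large-data blow-up. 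Any attempt to complete the argument you sketched will therefore fail at exactly the step you identified; there is no proof to find.
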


In the current manuscript, we show the following:

\begin{itemize}
\item Solutions to \eqref{model} are unbounded. In fact, they can blow up in finite time for sufficiently large data, even under the parametric restrictions of Theorem 3.2 \cite{patra2022effect}. This is shown via Theorem \ref{thm:thm2}.

\item The positive quadrant $\mathbb{R}^{2}$ is not invariant for system \eqref{model}. This is shown via Corollary \ref{cor:bdd_patra11}.

\item Introducing delay (motivated by gestation period) in the predator yields global in-time existence. This is shown via Theorem \ref{delay_ode_blowup}. 

\item A new concept of quenching in finite time is introduced for the prey species. Subsequently, we proved that the predator's blow-up time and the prey's quenching time are one and the same. For the non-delayed model, this is shown via Theorem \ref{conj:quench1} and numerically validated through Remark \ref{rem:quen1} and Figure \ref{fig:quench}. Moreover, for the delayed model, it is proposed via Theorem \ref{conj:quench2} and numerically validated through Remark \ref{rem:quen2} and Figure \ref{fig:quench_delay}.

\item Several numerical simulations are performed to corroborate all of our analytical findings. See Figs. [\ref{fig:blowup},\ref{fig:blowup_ic},\ref{fig:quench},\ref{fig:blowup_delay},\ref{fig:quench_delay},\ref{fig:blowup_non_delay_f1},\ref{fig:blowup_non_delay_f1_domain},\ref{fig:blowup_delay_f1},\ref{fig:blowup_phi}].

\item It is numerically validated that the linear feedback control proposed in \cite{patra2022effect} does not prevent the blow-up for both delayed and non-delayed models. This is numerically shown via Remarks \ref{rem:feed1} and \ref{rem:feed2}, and Figs [\ref{fig:blowup_non_delay_f1},\ref{fig:blowup_delay_f1}].

\item In \cite{patra2022effect}, the asymptotic stability of the interior equilibrium for the delayed linear feedback control model (under some parametric restrictions) is proposed and proved. This is disproved by providing a counterexample. This is numerically shown via Remark \ref{rem:feed2} and Figure \ref{fig:blowup_delay_f1}.

\item The basin of attraction is thoroughly discussed, supported by several theorems (refer to Theorems \ref{thm:stable_int} and \ref{thm:hopf-bif}) and illustrated through Example \ref{exam1}.

\item In \cite{patra2022effect}, the authors observed the presence of multiple concurrent limit cycles. Expanding on their findings, we propose several conjectures (refer to Conjectures \ref{con3}, \ref{con4}, and \ref{con5}) and provide numerical validations (refer to Remark \ref{remark:LPC}, and Figures \ref{fig:2limit-cycles} and \ref{fig:generalized-hopf}) regarding the occurrence of two concurrent limit cycles. These validations are obtained by systematically varying either one or two parameters.

\item A detailed discussion is carried out with applications of our results to ecological scenarios. Furthermore, several open questions are proposed and discussed.

\end{itemize}

\section{Preliminaries}

Given a system of ODE, depending on the non-linearity in the equations,
one might not expect a solution to always exist globally in time. In
particular, solutions of some ODE may blow up in finite time \cite{Qui-Soup}. Recall,

\begin{definition}
(Finite time blow-up for ODE) We say that a solution $u(t)$, of a given ODE, with
suitable initial conditions, blows up at a finite time if%
\begin{equation*}
\underset{t \rightarrow T^{\ast } < \infty}{\lim }\left\vert u\left( t\right)
\right\vert =+\infty ,
\end{equation*}%
where $T^{\ast } < \infty$ is the blow-up time.
\end{definition}

\bigskip We need the following alternative (See \cite{Fri,Hen,Paz,Smo,Rot}).

\begin{proposition}
\label{prop.exis.blow.2.2} The two-component system \eqref{model}
admits a unique local in time, classical solution $\left( X, Y \right) $
on an interval $[0,T_{\max }]$, and either\newline
(i) \textit{The solution} \textit{is bounded on} $[0,T_{\max })$ \textit{and
it is global ( i.e.} $T_{\max }=+\infty $).\newline
\textit{(ii) Or }%
\begin{equation}
\underset{t\nearrow T_{\max }}{\lim }\max \hspace{.02in} \left\vert X(t)\right\vert
+\left\vert Y(t)\right\vert  =+\infty ,
\label{2.D.C.R.2.3}
\end{equation}%
\textit{in this case, the solution is not global, and we say that it blows up in finite
time} $T_{\max }$, \textit{or it ceases to exist}, where $T_{\max } < \infty$
denotes the eventual blowing-up time.
\end{proposition}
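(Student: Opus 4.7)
The plan is to derive the proposition as a direct instance of the standard local-existence-plus-continuation machinery for ODE systems (see Friedman, Henry, Pazy, Smoller, Rothe, as cited in the statement). The strategy has two stages: first establish local well-posedness, then bootstrap the maximality argument to obtain the stated dichotomy.

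First I would treat the non-delayed case $\tau=0$ explicitly, viewing \eqref{model} as $(X,Y)' = F(X,Y)$ with
\[
F_1(X,Y) = RX\!\left(1-\tfrac{X}{K}\right) - \tfrac{MXY}{X^p+C}, \qquad F_2(X,Y) = \!\left(D-\tfrac{E}{X+A}\right)\!Y^2.
\]
Since $A>0$ and $C>0$, the denominators $X+A$ and $X^p+C$ are bounded away from zero on any compact subset of a suitable open neighborhood $\Omega$ of the initial datum $(X_0,Y_0)\in\mathbb{R}^2_+$; on such a neighborhood $F$ is of class $C^1$, hence locally Lipschitz. The Picard--Lindel\"of theorem then supplies a unique classical solution on some interval $[0,\delta]$, and by Zorn-type extension one obtains a maximal interval of existence $[0,T_{\max})$. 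For the delayed case $\tau>0$, I would invoke the method of steps: on $[0,\tau]$ the term $X(t-\tau)$ is given by the prescribed continuous history, so the first equation reduces to a non-autonomous but still locally Lipschitz ODE; iterating on $[k\tau,(k+1)\tau]$ yields the same local existence statement.

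Second I would prove the dichotomy by contradiction. Suppose $T_{\max}<\infty$ and, contrary to \eqref{2.D.C.R.2.3}, that
\[
M_\ast := \limsup_{t\nearrow T_{\max}}\bigl(|X(t)|+|Y(t)|\bigr) < \infty.
\]
Then the orbit $\Gamma = \{(X(t),Y(t)) : t\in[0,T_{\max})\}$ is contained in the compact set $\overline{B}_{M_\ast+1}(0)$, on which $F$ is uniformly Lipschitz with some constant $L$ and uniformly bounded by some $M$. A standard uniform-existence statement (Picard--Lindel\"of with a priori bounds) then guarantees that from every point of this compact set one may continue the solution for at least a fixed time $\delta=\delta(M,L)>0$. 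Choosing $t_0\in[0,T_{\max})$ with $T_{\max}-t_0<\delta/2$ and solving the Cauchy problem with datum $(X(t_0),Y(t_0))$, one produces an extension of the solution on $[t_0,t_0+\delta]\supsetneq[0,T_{\max})$, contradicting the maximality of $T_{\max}$. Hence either $T_{\max}=+\infty$ (case (i)), or $T_{\max}<\infty$ and \eqref{2.D.C.R.2.3} must hold (case (ii)).

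The main obstacle I anticipate is purely bookkeeping rather than analytical: one must verify that the singular loci $X=-A$ and (for non-integer $p$) $X\le 0$ are never reached on the compact set captured by the contradiction hypothesis, so that the Lipschitz estimate used in the continuation step is genuinely uniform. This follows because the hypothesis $M_\ast<\infty$ alone does not exclude $X\to -A$; however, invariance-type considerations on the $X$-equation (or a direct appeal to the smoothness of $F$ on a slightly enlarged open neighborhood of $\Gamma$, using that $\Gamma$ is closed in that neighborhood by continuity of $(X,Y)$) sidestep this issue. For the delayed case, the analogous continuation step uses that, on each step-interval, the history function is a $C^0$ function already constructed on the previous interval, so that the Lipschitz constant of the effective vector field is again controlled by the local bounds on $\Gamma$.
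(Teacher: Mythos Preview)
Your sketch is correct and is precisely the standard Picard--Lindel\"of plus maximal-continuation argument that the cited references (Friedman, Henry, Pazy, Smoller, Rothe) contain; the paper itself does not supply a proof but merely states the alternative as a known result with those citations. In that sense your proposal goes further than the paper, which treats the proposition as black-box input.
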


\begin{definition}
(Finite time quenching for ODE) We say that a solution $u(t)$ of a given ODE, with
suitable initial conditions, quenches in finite time if%
\begin{equation*}
\underset{t \rightarrow T^{\ast \ast} < \infty}{\lim }\left\vert \frac{du\left( t\right)}{dt}
\right\vert =+\infty ,
\end{equation*}%
where $T^{\ast \ast} < \infty$ is the quenching time.
\end{definition}

The idea of quenching and it's several applications can be found in  \cite{Levine83,Beauregard22}.

\begin{figure}[ht]
	{\scalebox{0.49}[0.49]{
			\includegraphics[width=\linewidth,height=4in]{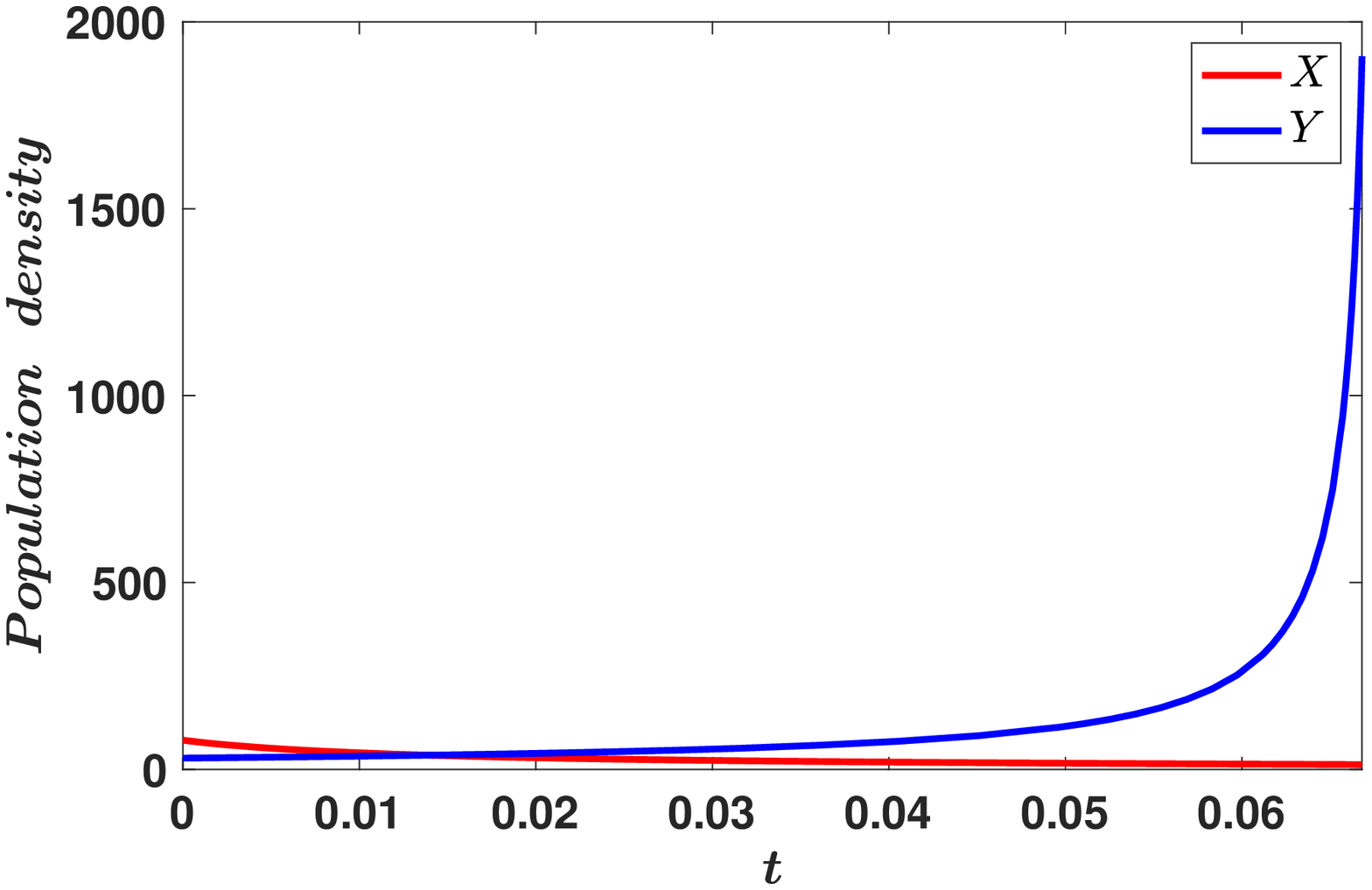}}}
	{\scalebox{0.49}[0.49]{
			\includegraphics[width=\linewidth,height=4in]{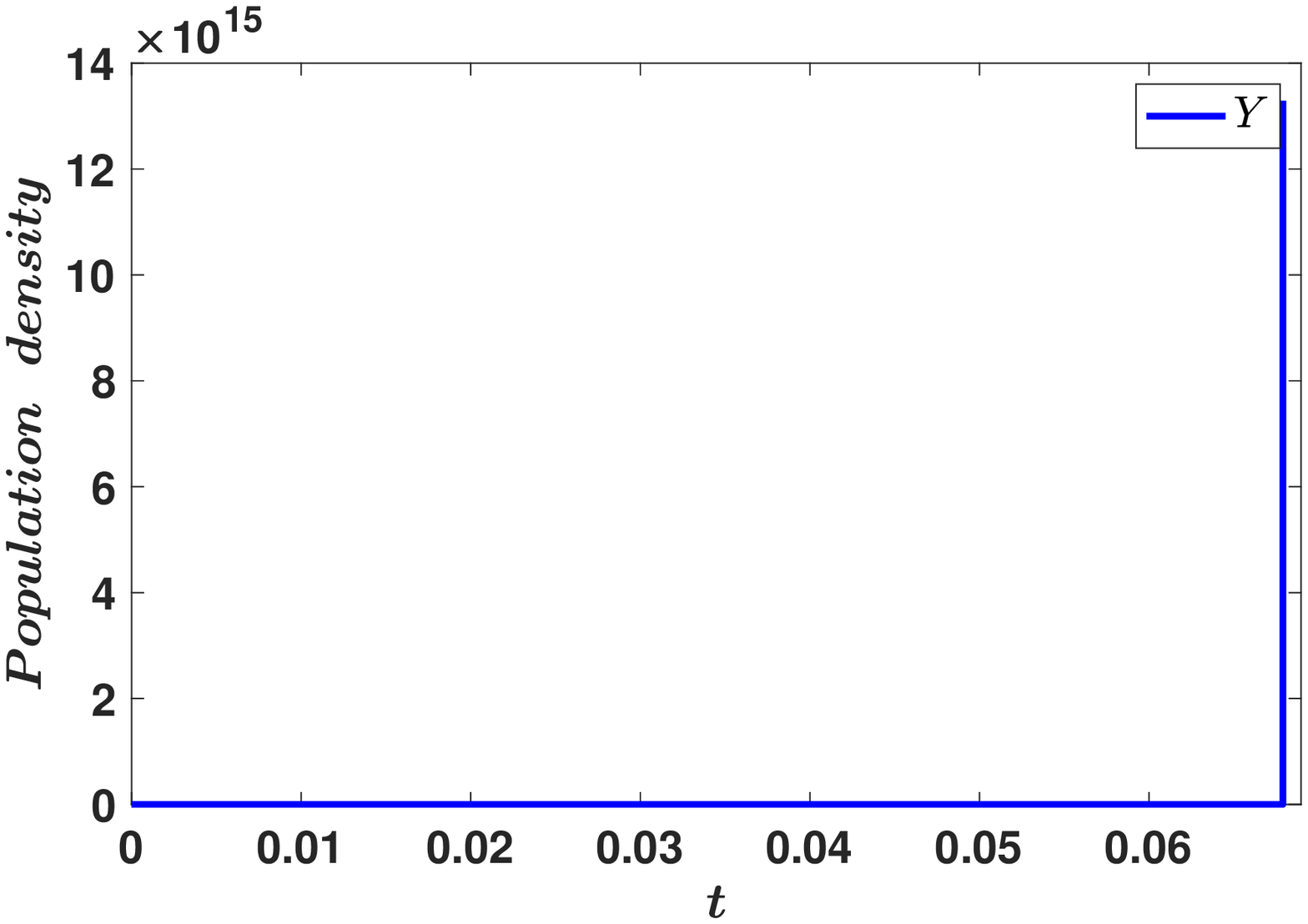}}}
		\caption{Time series simulation of the non-delayed predator-prey dynamics given by the mathematical model in \eqref{model}. The simulation used specific parameter values: $R=1$, $K=1$, $M=1.2$, $p=2$, $D=0.5$, $E=0.2$, $A=0.2$, and $C=0.3$, with initial data $[X(0),Y(0)]=[78,30]$. The parameter choices ensure that the model satisfies the requirements for a blow-up, as stated by Theorem \ref{ode_blowup}.}
	\label{fig:blowup}
\end{figure}

\begin{figure}[ht]
    {\scalebox{0.49}[0.49]{
			\includegraphics[width=\linewidth,height=4in]{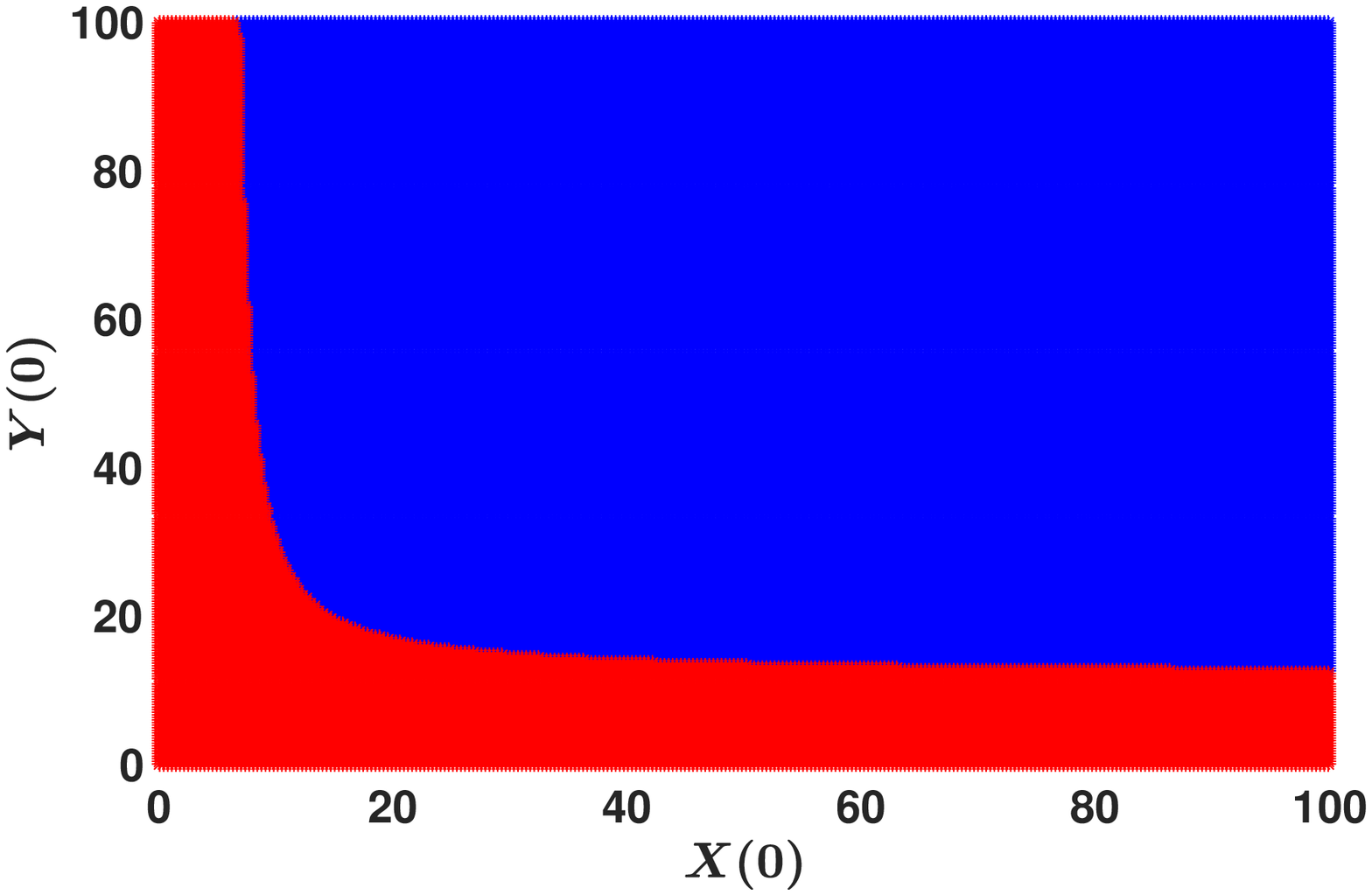}}}
	{\scalebox{0.49}[0.49]{
		\includegraphics[width=\linewidth,height=4in]{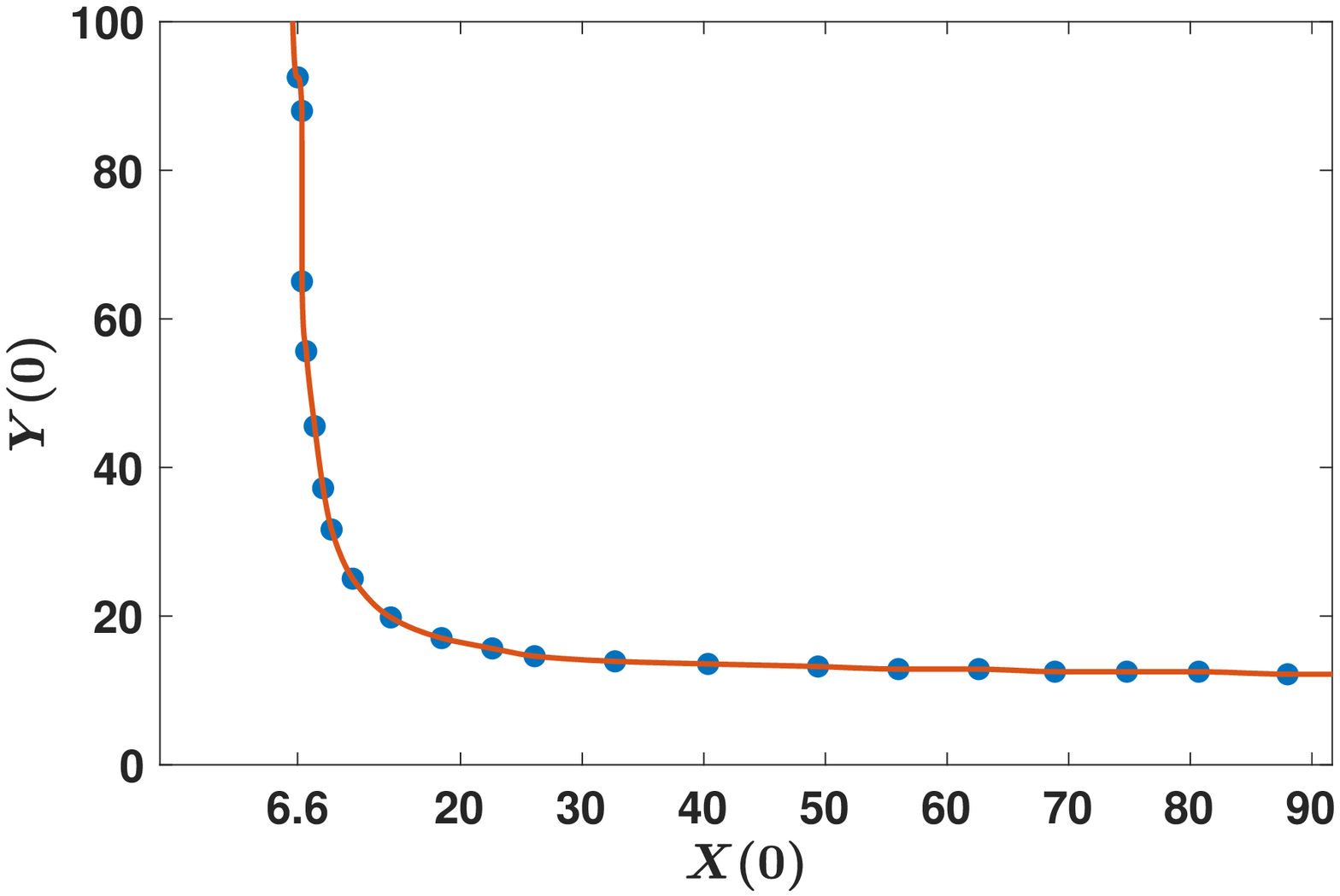}}}
    \caption{ (A) Dynamics of non-delayed predator-prey dynamics given by the mathematical model in \eqref{model} when certain initial conditions are applied. The colored regions indicate two possible outcomes: the red region represents a non-blowup case, while the blue region represents a blow-up case if we choose initial conditions from that region. The model was simulated using specific parameter values: $R=1$, $K=1$, $M=1.2$, $p=2$, $D=0.5$, $E=0.2$, $A=0.2$, and $C=0.3$. (B) Shape-preserving interpolation of the boundary of the region of attraction. The blow-up tolerance parameter is assumed as $10^8.$}
	\label{fig:blowup_ic}
\end{figure}


\begin{theorem}
\label{thm:thm2}
Consider the model \eqref{model}. Then $Y(t)$ blows up in finite time, that is

\begin{equation}
\label{eq:bu}
\mathop{\lim}_{t \rightarrow T^{*} < \infty} |Y(t)| \rightarrow \infty,
 \end{equation}

as long as the initial data $(X_{0},Y_{0})$ is sufficiently large.


\end{theorem}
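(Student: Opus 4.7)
The plan is to exploit the Riccati-type structure of the $Y$-equation. Since
\begin{equation*}
Y' = \bigl(D - E/(X+A)\bigr) Y^{2},
\end{equation*}
scalar comparison forces finite-time blow-up of $Y$ as soon as the coefficient $D - E/(X+A)$ can be bounded below by a positive constant on a sufficiently long time interval. The whole argument therefore reduces to maintaining a suitable lower bound on $X(t)$ during a short blow-up window.

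Concretely, I would first fix $X_{0}$ large enough that $D - E/(X_{0}+A) \geq 3D/4$ and set $X^{\sharp} := X_{0}/2$, which (for $X_{0}$ large enough) gives $D - E/(X^{\sharp}+A) \geq c$ for some $c > 0$ close to $D$. Introduce the first-exit time $T^{\sharp} := \inf\{\,t \geq 0 : X(t) \leq X^{\sharp}\,\}$. On $[0,T^{\sharp})$ one has $Y' \geq cY^{2}$ and hence $Y(t) \geq Y_{0}/(1 - cY_{0}t)$; this lower envelope blows up at $\tau := 1/(cY_{0})$. Proving $T^{\sharp} \geq \tau$ then suffices, since the envelope forces $Y(t) \to \infty$ at some $T^{*} \leq \tau < \infty$.

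To establish $T^{\sharp} \geq \tau$, I would integrate the logarithmic derivative of $X$ to obtain
\begin{equation*}
\ln\!\bigl(X(t)/X_{0}\bigr) \;\geq\; -\frac{R X_{0}\,t}{K} - \frac{M}{(X^{\sharp})^{p}}\int_{0}^{t} Y(s)\,ds \qquad \text{on } [0, T^{\sharp}),
\end{equation*}
and control the integral through the companion bound $Y' \leq DY^{2}$, which yields $Y(s) \leq Y_{0}/(1-DY_{0}s)$ and hence $\int_{0}^{t} Y\,ds \leq D^{-1}\ln\bigl(1/(1-DY_{0}t)\bigr)$ on $[0, 1/(DY_{0}))$. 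Choosing $Y_{0}$ much larger than $X_{0}$ makes the first term negligible, while $X_{0}$ large (together with $p \geq 1$) makes the prefactor $M/(X^{\sharp})^{p}$ small, so the right-hand side stays strictly above $\ln(X^{\sharp}/X_{0}) = -\ln 2$, forcing $X(t) > X^{\sharp}$ on the subinterval.

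The central technical obstacle is that the single-shot estimate is only clean on $[0, (1-\eta)/(DY_{0})]$ for a fixed buffer $\eta > 0$, whereas $\tau = 1/(cY_{0})$ lies slightly beyond $1/(DY_{0})$; at $s = 1/(DY_{0})$ the upper envelope for $Y$ becomes singular and the $\int Y\,ds$ estimate fails. To close the gap I would use a bootstrap: apply the estimate first on $[0, s_{1}]$ with $s_{1} := (1-\eta)/(DY_{0})$, then restart with the new initial data $(X(s_{1}), Y(s_{1}))$; since $Y(s_{1}) \gg Y_{0}$, the restarted blow-up horizon is proportionally shorter, and iterating produces a geometric sequence of sub-intervals along which the cumulative decrease of $X$ sums to a convergent series that can be made arbitrarily small by taking $X_{0}, Y_{0}$ large. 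This yields $T^{\sharp} \geq \tau$ and hence $Y(t) \to \infty$ at some $T^{*} \leq \tau$.
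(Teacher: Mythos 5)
Your overall strategy is the same as the paper's: both arguments reduce blow-up of $Y$ to keeping the Riccati coefficient $D - E/(X+A)$ bounded below by a positive constant $c$ on a time window of length at least $1/(cY_{0})$, and both therefore stand or fall on controlling the decay of $X$ while $Y$ is exploding. The paper simply asserts that $X+A>E/D$ persists on $[0,\delta)$ for $X_{0}$ large and then estimates $\int_{0}^{\delta} E/(X(s)+A)\,ds < \delta E/X_{0}$ as if $X(s)$ stayed of order $X_{0}$ throughout; you are more candid in recognizing that this persistence is exactly the nontrivial step, and you try to prove it via the logarithmic derivative of $X$ together with the companion bound $Y'\leq DY^{2}$.

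However, the bootstrap you propose to close the gap does not work, and the failure is structural rather than technical. Over the $k$-th sub-interval $[s_{k}, s_{k}+(1-\eta)/(DY(s_{k}))]$ the two envelopes give
\[
\frac{1}{c}\ln\frac{D}{D-c(1-\eta)} \;\leq\; \int_{s_{k}}^{s_{k+1}} Y\,ds \;\leq\; \frac{1}{D}\ln\frac{1}{\eta},
\]
so each restart contributes to the drop of $\ln X$ an amount bounded \emph{below} by a positive constant independent of $k$, of $X_{0}$ and of $Y_{0}$; only the prefactor $M/((X^{\sharp})^{p}+C)$ shrinks when $X_{0}$ grows, not the number of restarts required to reach the putative blow-up time, which is infinite. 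Hence the cumulative decrease $\sum_{k}\Delta_{k}$ diverges rather than converging. Equivalently: at any finite Riccati blow-up time $T^{*}$ one has $Y(t)\geq 1/(D(T^{*}-t))$, so $\int_{0}^{T^{*}}Y\,ds=+\infty$, and the very inequality you integrate then forces $\ln X(t)\to-\infty$ as $t\to T^{*}$. No largeness assumption on $(X_{0},Y_{0})$ can keep $X$ above a fixed threshold all the way to $T^{*}$ by this route; indeed, when $E>AD$ the same computation shows $X$ must cross the critical level $E/D-A$ (where the Riccati coefficient changes sign) strictly before $Y$ can become infinite, so the obstruction you located appears to be a feature of the dynamics and not merely of the estimate. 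The paper's own proof glosses over precisely this point, so your attempt, while incomplete, exposes the real difficulty rather than resolving it.
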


\begin{proof}

Consider \eqref{model}, with positive initial conditions $%
(X_{0},Y_{0})$. By integrating the equation for the predator in time, we obtain%
\begin{equation*}
Y=\frac{1}{\dfrac{1}{Y_{0}}-\displaystyle\int_{0}^{t}\left( D-\frac{E}{%
X(s)+A}\right) ds}.
\end{equation*}%
Now consider the continuous function:
\begin{equation}
\psi \left( t\right) =\frac{1}{Y_{0}}-\int_{0}^{t}\left( D -\frac{%
E}{X(s)+A}\right) ds.
\end{equation}%
Our goal is to show $\psi(T) = 0$, at some time $T>0$. Now we know from Proposition \ref{prop.exis.blow.2.2} since solutions are classical and so continuous $\forall \hspace{.051in} T$, where $t < T_{max}$ and $T < T_{max}$, where $T_{max}$ is the maximal interval of existence of the local solutions,
that for a given $\delta >0$, $\delta < T_{max}$,
we can choose $X_{0}$ sufficiently large such that%
\begin{equation}
X+A> \dfrac{E}{D},\ \forall \hspace{.05in} t\in \lbrack 0,\delta ),
\end{equation}%

and so
\begin{equation}
\frac{E}{X+A}<D\ \forall \hspace{.05in} t\in \lbrack
0,\delta ).
\end{equation}%
This implies
\begin{equation*}
\int_{0}^{t}\left( \frac{E}{X+A}\right) ds<\int_{0}^{t}Dds=
D t, \  \text{\ for all }t \in \lbrack 0,\delta ).
\end{equation*}%

This entails, that for some $T^{*} < \delta$,
\begin{eqnarray}
\psi \left( T^{*}\right) &=& \frac{1}{Y_{0}}-\int_{0}^{T}\left(D-\frac{E}{X(s)+A}\right) ds  \nonumber \\
&=& \frac{1}{Y_{0}} + \int_{0}^{T}\left(\frac{E}{X(s)+A}\right) ds  - DT \nonumber \\
&<&  \frac{1}{Y_{0}} + \int_{0}^{\delta}\left(\frac{E}{X_{0}}\right) ds  - D\delta   \nonumber \\
&=& \frac{1}{Y_{0}} + \left(\frac{\delta E}{X_{0}}\right) - D\delta \leq 0. \nonumber \\
\end{eqnarray}

The last inequality follows by choosing $(X_{0},Y_{0})$ appropriately large. Thus $\psi \left( T^{*}\right) \leq 0$, since $ \psi \left(0 \right) = Y_{0 }> 0$, a simple application of the intermediate value theorem tells us that there exists a $T^{**} < T^{*}$, where $\psi \left( T^{**}\right)= 0$, and so $Y$ has blown-up at the finite time $T^{**}$, by a standard comparison argument \cite{Hen}.
\end{proof}

A corollary to this is,

\begin{corollary}
\label{cor:bdd_patra11}
The positive quadrant $\mathbb{R}^{2}$ is not invariant for system \eqref{model}.
\end{corollary}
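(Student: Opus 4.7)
The plan is to derive the corollary immediately from Theorem \ref{thm:thm2}. A set $S$ is positively invariant under the flow of an ODE if every trajectory launched from $S$ remains in $S$ for all $t\ge 0$; in particular, such a trajectory must exist globally in time. Theorem \ref{thm:thm2} exhibits initial data $(X_0,Y_0)\in\mathbb{R}^2_+$ for which the solution ceases to exist at a finite time $T^{**}$ with $Y(t)\to+\infty$. Hence the trajectory is only defined on $[0,T^{**})$ and cannot remain in $\mathbb{R}^2_+$ for all non-negative times, directly contradicting Lemma \ref{lem:bdd_patra}.

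To make the failure of invariance visible at the level of the trajectory itself (and not merely its interval of existence), I would revisit the closed-form representation
\begin{equation*}
Y(t)=\frac{1}{\psi(t)}, \qquad \psi(t)=\frac{1}{Y_0}-\int_0^t\left(D-\frac{E}{X(s)+A}\right)ds,
\end{equation*}
obtained in the proof of Theorem \ref{thm:thm2}. The continuous function $\psi$ is strictly positive at $t=0$ and vanishes at $T^{**}$. Because $X$ satisfies a logistic-type equation with a bounded predation term and therefore stays bounded on a neighborhood of $T^{**}$, the integrand $D-E/(X(s)+A)$, which is strictly positive on $[0,T^{**}]$ by the choice of large $X_0$, remains strictly positive on a right-neighborhood of $T^{**}$ as well. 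Thus $\psi$ is strictly decreasing through zero at $T^{**}$, so $\psi(t)<0$ immediately past $T^{**}$ and $Y(t)=1/\psi(t)$ becomes negative: any natural continuation of the solution crosses out of $\mathbb{R}^2_+$.

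The only subtlety, and the nearest thing to an obstacle, is that the notion of invariance in Lemma \ref{lem:bdd_patra} is not pinned down precisely in \cite{patra2022effect}. Under the standard global convention (invariance $\Rightarrow$ global existence in $S$) the first paragraph is already conclusive; the sign-change argument in the second paragraph closes the gap under any weaker, purely local reading. No analytic input beyond Theorem \ref{thm:thm2} is required.
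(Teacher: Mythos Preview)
Your first paragraph is exactly the paper's argument: the corollary is stated immediately after Theorem~\ref{thm:thm2} with no separate proof, so the intended reasoning is simply that blow-up in finite time precludes global existence in $\mathbb{R}^2_+$, contradicting the invariance claim of Lemma~\ref{lem:bdd_patra}. On that level your proposal and the paper coincide.

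Your second paragraph, however, contains a slip and is in any case unnecessary. You write that ``$X$ satisfies a logistic-type equation with a bounded predation term and therefore stays bounded on a neighborhood of $T^{**}$''; but the predation term $MXY/(X^p+C)$ is \emph{not} bounded near $T^{**}$, since $Y\to\infty$ there (indeed this is precisely the quenching phenomenon of Theorem~\ref{conj:quench1}). More fundamentally, once $Y$ blows up at $T^{**}$ the classical solution ceases to exist, so speaking of $\psi$ or $X$ on a right-neighborhood of $T^{**}$ has no rigorous content: there is no ``natural continuation'' through $+\infty$ in the ODE sense. Since the standard reading of invariance already entails global existence, the first paragraph is complete on its own and you should simply drop the continuation argument.
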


\begin{Remark}
We next provide a counter-example to the boundedness claim via Theorem 3.2 \cite{patra2022effect}.  Pick $R=1, K=1, M=1.2 ,p=2, D=0.5, E=0.2, A=0.2$ and $C=0.3$, then $X^*=0.2$ and $ Y^*=0.226667$. Let's calculate the fractions involved in parametric restrictions given by \eqref{res},
\[ 0.2 = \min \{1.2,0.2\} < \dfrac{0.5 \times 0.2}{(0.2)^2 + 0.3}=0.294118,\]
and
\[ 0=0.5- \dfrac{0.2}{0.2 + 0.2} < \dfrac{0.5}{1.2}\Big( \dfrac{0.2}{0.226667}\Big)^2=0.324585\]

This choice of parameter satisfies the constraints of Theorem \ref{thm:bdd_patra}, but from the time-series plot Figure~\ref{fig:blowup}, the predator $Y$ blows up to infinity in finite time $t=T^*=6.789603 \times 10^{-2}.$
\end{Remark}

\begin{Remark}
    For the parameters: $R=1, K=1, M=1.2 ,p=2, D=0.5, E=0.2, A=0.2$ and $C=0.3$, the parameters satisfies the parametric restriction given in Theorem \ref{thm:stable_int}:
    \[ 0.3=C>\frac{\left(\frac{E}{D}-A\right)^p (A D p+A D+D K p-E p-E)}{E-A D}=0.28.\]
   The interior equilibrium $(X^*,Y^*)=(0.2,0.226667)$ is locally asymptotic stable.

\end{Remark}

\begin{Remark}

Note, via a simple comparison to the problem $\frac{dY}{dt}=DY^{2}$, the blow-up time $T^{**}$ for \eqref{model}, has a lower estimate given by, $\frac{1}{D Y_{0}} < T^{*}$.
Also via standard theory,
$T_{max} < T^{**}$. Here $T_{max}$ was the maximal interval of existence assumed on $%
Y(t) $, so that we could make the formal estimates.

\end{Remark}

\begin{figure}[ht]
	{\scalebox{0.49}[0.49]{
			\includegraphics[width=\linewidth,height=4in]{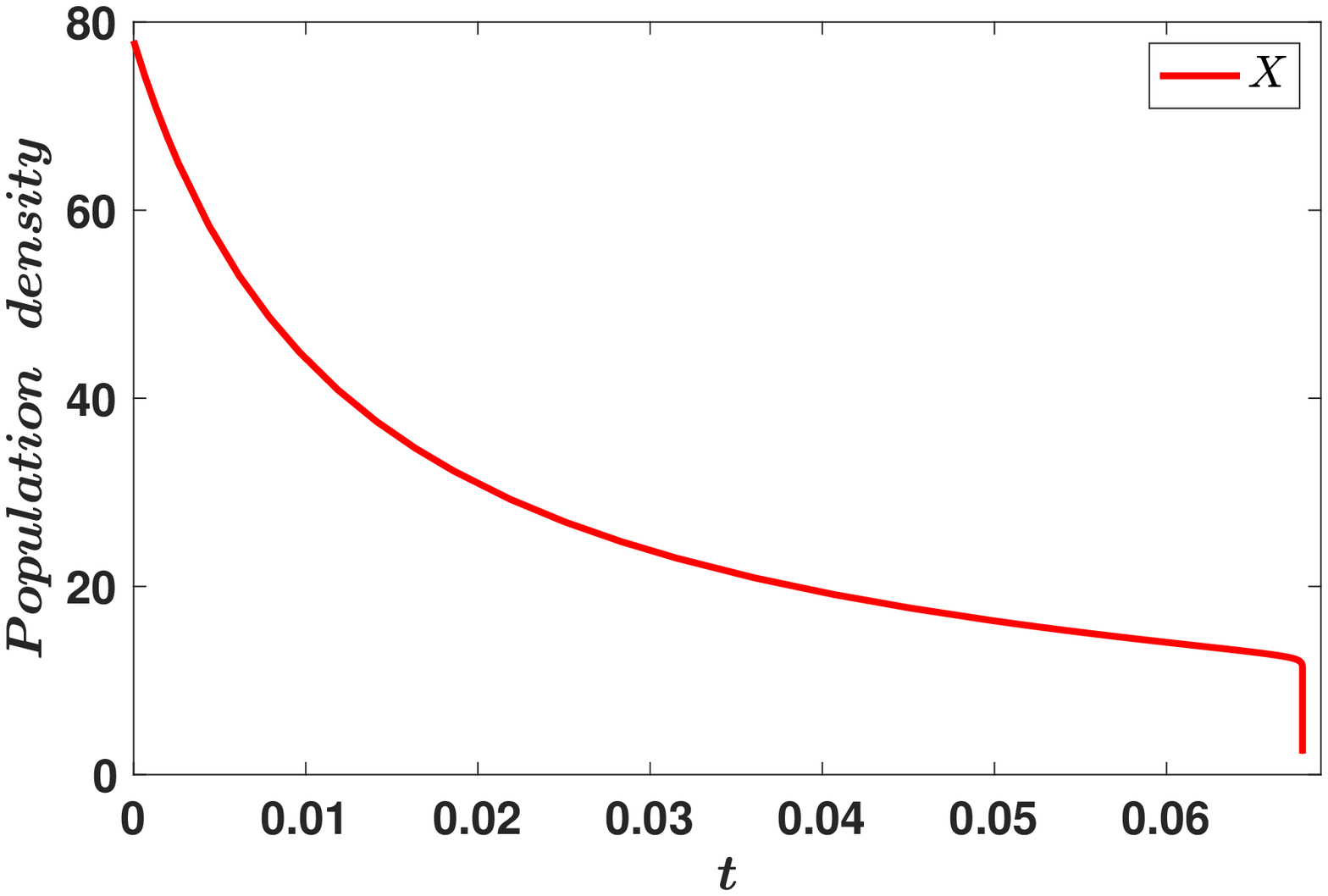}}}
	{\scalebox{0.49}[0.49]{
			\includegraphics[width=\linewidth,height=4in]{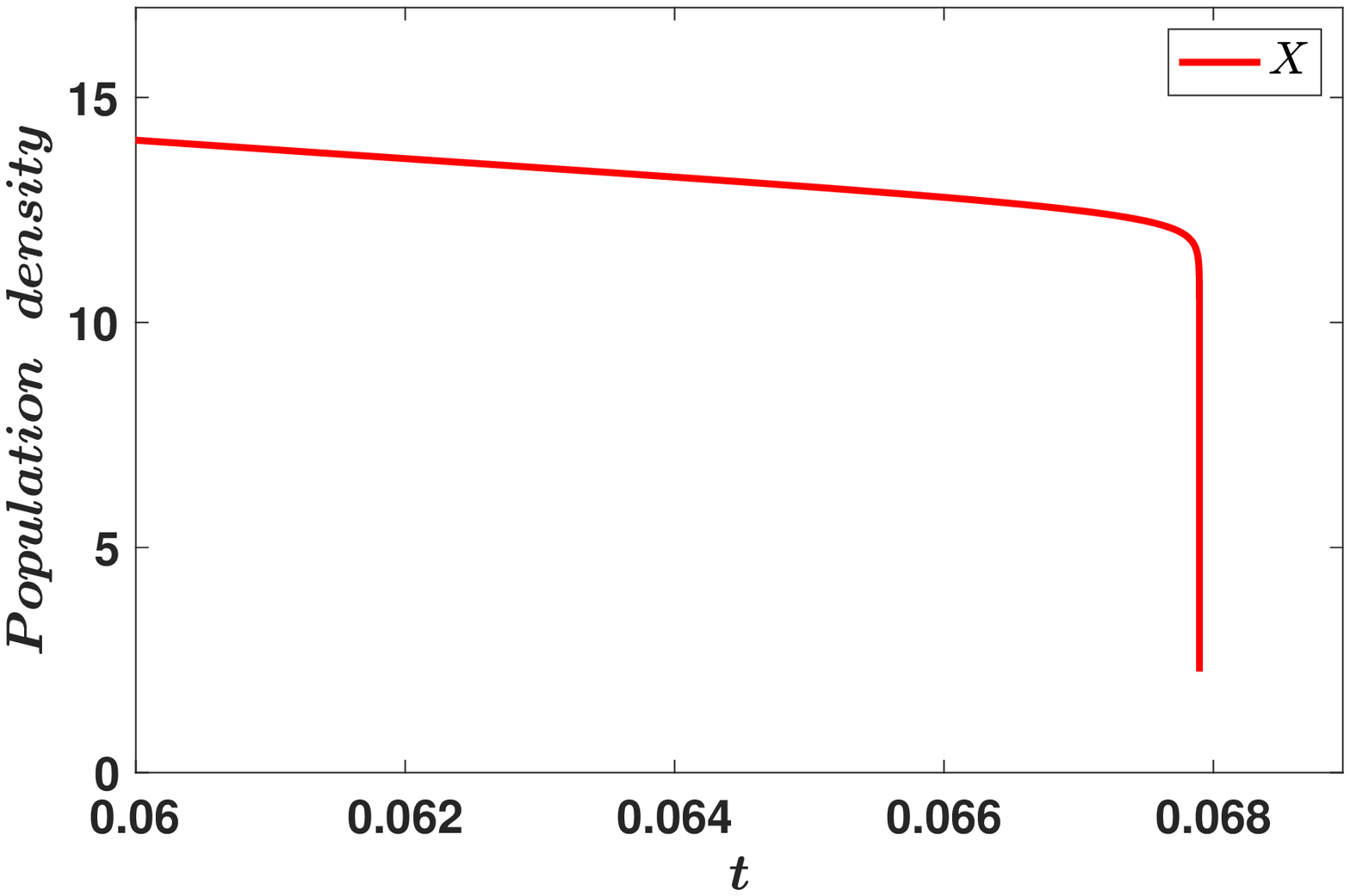}}}
		\caption{Time series simulation of the predator-prey dynamics given by the mathematical model in \eqref{model}. The simulation used specific parameter values: $R=1$, $K=1$, $M=1.2$, $p=2$, $D=0.5$, $E=0.2$, $A=0.2$, and $C=0.3$, with initial data $[X(0),Y(0)]=[78,30]$. }
	\label{fig:quench}
\end{figure}

\begin{Remark}\label{rem:quen1}
Let's numerically validate that predator-prey dynamics given by the mathematical model in \eqref{model} quenches in finite time. Consider the parameters as $R=1, K=1, M=1.2 ,p=2, D=0.5, E=0.2, A=0.2$ and $C=0.3$. From the simulation, the blow up time $T^*=6.789603 \times 10^{-2}$, and $X(T^*)=2.2434$ and $Y(T^*)=1.328 \times 10^{16}$. Hence,
\[ \displaystyle\frac{d X}{d t} = -6.7078 \times 10^{15} \quad \& \quad \displaystyle\frac{d Y}{d t} =7.3834 \times 10^{31}.\]
\end{Remark}

This motivates the following Theorem:

\begin{theorem}
\label{conj:quench1}
    Consider the system \eqref{model}, for any choice of parameters and initial conditions chosen sufficiently large, the prey population $X(t)$ will quench at a finite time $T^{*}$, which is also a blow-up time for predator population $Y(t)$, i.e.,
    \begin{equation*}
\underset{t \rightarrow T^{\ast} < \infty}{\lim }\left\vert \frac{dX\left( t\right)}{dt}
\right\vert =+\infty \quad \& \quad \mathop{\lim}_{t \rightarrow T^{*} < \infty} |Y(t)| \rightarrow \infty,
\end{equation*}%
\end{theorem}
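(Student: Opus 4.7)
The plan is to transfer the finite-time blow-up of $Y$ established in Theorem \ref{thm:thm2} into an explosion of $|dX/dt|$ at the very same time, by combining a simple upper bound on $X$ with a structural lower bound extracted from the predator equation. From Theorem \ref{thm:thm2} one fixes a finite $T^{*}$ with $Y(t)\to\infty$, and by Proposition \ref{prop.exis.blow.2.2} this is also the maximal time of existence of the classical solution.

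First I would show that $X$ lies in a compact subset of $(0,\infty)$ on a left neighborhood of $T^{*}$. An upper bound is immediate: the predation term is non-positive, hence $dX/dt\leq RX$ gives $X(t)\leq X_{0}e^{RT^{*}}=:X_{\max}$. For the lower bound I would revisit the integral representation $Y(t)=1/\psi(t)$ with $\psi(t)=1/Y_{0}-\int_{0}^{t}(D-E/(X(s)+A))\,ds$ used in Theorem \ref{thm:thm2}. Since $\psi$ is Lipschitz (its integrand is bounded), $\psi>0$ on $[0,T^{*})$, and $\psi(T^{*})=0$, its left derivative at $T^{*}$ must be non-positive. A direct computation on the prey equation shows that for $Y$ large enough the predation term dominates the logistic term, so $X$ is eventually strictly decreasing and converges to some $X^{\sharp}\geq 0$ as $t\to T^{*}$. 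The non-positivity of $\psi'(T^{*-})=-(D-E/(X^{\sharp}+A))$ then forces $X^{\sharp}\geq E/D-A$, which under the natural regime $E>AD$ yields a strictly positive lower bound $\eta:=E/D-A>0$.

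With $X\in[\eta,X_{\max}]$ on $[T^{*}-\delta,T^{*})$, the ratio $X/(X^{p}+C)$ is bounded below by a positive constant, and the prey equation yields
\begin{equation*}
\left|\frac{dX}{dt}\right|\;\geq\;\frac{MXY}{X^{p}+C}\;-\;\left|RX\Bigl(1-\frac{X}{K}\Bigr)\right|\;\geq\;c_{1}\,Y(t)-c_{2}.
\end{equation*}
Since $Y(t)\to\infty$, this forces $|dX/dt|\to\infty$ as $t\to T^{*}$, which is exactly the quenching condition. Because the divergence is driven entirely by the blow-up of $Y$, the quenching time of $X$ must coincide with $T^{*}$, and neither singularity can develop before the other.

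The main obstacle is the strictly positive lower bound on $X$ near $T^{*}$. A direct Gronwall-type estimate fails, because $\int_{0}^{T^{*}}Y\,ds$ diverges logarithmically and the naive bound $X(t)\geq X_{0}\exp(-\tfrac{M}{C}\int_{0}^{t}Y\,ds)$ gives nothing. The feedback argument through $\psi$ sketched above is the right workaround, but it is cleanest in the regime $E>AD$; in the complementary regime one must rule out $X^{\sharp}=0$ by a finer rate analysis, comparing the power-law decay $X\sim(T^{*}-t)^{\alpha}$ with exponent $\alpha=M/(C(D-E/A))$ against the $1/(T^{*}-t)$ blow-up of $Y$, and verifying that $XY/(X^{p}+C)$ still diverges.
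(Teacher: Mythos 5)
Your proposal is correct and shares the paper's overall architecture: pin $X$ inside a compact subset of $(0,\infty)$ on a left neighborhood of the blow-up time $T^{*}$ of $Y$, so that the predation term $\frac{MXY}{X^{p}+C}\geq c_{1}Y\to\infty$ overwhelms the bounded logistic term, and then identify the quenching time with $T^{*}$ because the solution is classical before $T^{*}$ and ceases to exist after (the paper isolates this last point as Lemma \ref{lem:quench12}). Where you genuinely differ is the key step of ruling out $X\to 0$. The paper (Lemma \ref{lem:quench11}) argues by contradiction that if $X$ vanished near $T^{*}$ then $\frac{dY}{dt}=\bigl(D-\frac{E}{A}\bigr)Y^{2}<0$, so $Y$ could not blow up; as written this passes through the unjustified step ``$\lim_{t\to T^{*}}X=0\Rightarrow X\equiv 0$ on $(T^{*}-\epsilon,T^{*})$''. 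You instead read the same sign condition off the left derivative of $\psi(t)=\frac{1}{Y_{0}}-\int_{0}^{t}\bigl(D-\frac{E}{X(s)+A}\bigr)\,ds$ at its zero, which is cleaner, avoids that gap, and buys the quantitative bound $X^{\sharp}\geq \frac{E}{D}-A$ rather than a bare nonvanishing statement. Note that both arguments tacitly require $E>AD$ (the paper's contradiction needs $D-\frac{E}{A}<0$, your lower bound needs $\frac{E}{D}-A>0$), a restriction absent from the theorem's ``any choice of parameters''; you are more honest in flagging this and sketching the rate analysis needed in the complementary regime, which the paper does not address at all. One small point to tighten: your claim that $X$ is ``eventually strictly decreasing'' should be justified by noting that $X\leq X_{\max}$ forces $\frac{MY}{X^{p}+C}\geq \frac{MY}{X_{\max}^{p}+C}\to\infty$, so the bracket $R(1-X/K)-\frac{MY}{X^{p}+C}$ is eventually negative; with that, the monotone limit $X^{\sharp}$ exists and your $\psi$-argument closes.
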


This is proved via a series of two lemmas.

\begin{lemma}
\label{lem:quench11}
    Consider the system \eqref{model}, for any choice of parameters and initial conditions chosen sufficiently large, the prey population $X(t)$ will quench at a finite time $T^{**}$, which is greater than or equal to the blow-up time $T^*$ for predator population $Y(t)$.
\end{lemma}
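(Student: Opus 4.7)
The plan is to proceed by contradiction: suppose the prey quenches at some time $T^{**}<T^{*}$, strictly before the predator blow-up. I will then show that the right-hand side of the prey equation in \eqref{model} is uniformly bounded on $[0,T^{**}]$, which forces $|dX/dt|$ to remain bounded on that interval and contradicts the definition of quenching. This delivers $T^{**}\ge T^{*}$.

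First I would establish that the prey itself cannot cease to exist before the predator. Dropping the nonnegative predation term in the prey equation of \eqref{model} gives
\begin{equation*}
\frac{dX}{dt}\le RX\Bigl(1-\frac{X}{K}\Bigr),
\end{equation*}
so a standard comparison with the logistic ODE yields the uniform bound $X(t)\le M_X:=\max(X_0,K)$ on the entire interval of classical existence. Consequently the maximal existence time $T_{\max}$ of Proposition \ref{prop.exis.blow.2.2} must coincide with $T^{*}$: the solution can only lose regularity when $Y$ blows up. Thus $[0,T^{**}]\subset[0,T^{*})$ sits inside the regularity interval, and by continuity $M_Y:=\max_{[0,T^{**}]}Y$ is finite. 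Positivity $X(t)>0$ is inherited from the invariance of the line $X\equiv 0$ for the prey equation.

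Now I would substitute both bounds into the full prey equation to obtain
\begin{equation*}
\left|\frac{dX}{dt}\right|\;\le\; RX\Bigl(1+\frac{X}{K}\Bigr)+\frac{MXY}{X^{p}+C}\;\le\; RM_X\Bigl(1+\frac{M_X}{K}\Bigr)+\frac{M\,M_X\,M_Y}{C},
\end{equation*}
a constant independent of $t\in[0,T^{**}]$. This directly contradicts the quenching condition $\lim_{t\to T^{**}}|dX/dt(t)|=+\infty$, so no quenching time strictly less than $T^{*}$ can exist, and therefore $T^{**}\ge T^{*}$.

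The main obstacle is actually the very first step: ruling out that $X$ itself blows up before $Y$. Without that, the bound on $Y$ over $[0,T^{**}]$ would be meaningless, since $T^{**}$ could lie outside the regularity interval. The logistic comparison removes this cleanly, reflecting the intuition that the prey is self-limited by its carrying capacity $K$ irrespective of how large $Y$ becomes. A secondary worry, that $X$ might approach zero and generate some singular behaviour in $dX/dt$, is vacuous here because every term on the right-hand side of the prey equation carries a factor of $X$; a small $X$ only helps, and no positive lower bound on $X$ is required.
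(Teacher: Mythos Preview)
Your contradiction argument is correct for what it establishes: if a finite quenching time $T^{**}$ exists, then $T^{**}\ge T^{*}$. The logistic comparison bounds $X$ uniformly, so the maximal existence time is $T^{*}$; on any compact subinterval of $[0,T^{*})$ both $X$ and $Y$ stay bounded, and hence so does $|dX/dt|$. Quenching strictly before $T^{*}$ is therefore impossible. For the bare inequality $T^{**}\ge T^{*}$ this is arguably more transparent than the paper's route.

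However, the lemma asserts two things: (i) $X$ \emph{does} quench at some finite time, and (ii) that time satisfies $T^{**}\ge T^{*}$. You address only (ii). Nothing in your argument forces $|dX/dt|$ to diverge anywhere; you have shown quenching cannot happen early, not that it happens at all. The paper supplies exactly this missing piece, and by a different mechanism: it argues that $X$ cannot tend to $0$ as $t\to T^{*}$ (for then $dY/dt\approx (D-E/A)Y^{2}<0$ near $T^{*}$, contradicting the blow-up of $Y$), so $X(t)\to\hat x>0$ while $Y(t)\to\infty$, and the predation term $MXY/(X^{p}+C)$ drives $|dX/dt|\to\infty$ at $t=T^{*}$. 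That exhibits a finite quenching time. Your closing remark that ``no positive lower bound on $X$ is required'' is true for your upper-bound estimate on $|dX/dt|$, but a positive lower bound on $X$ near $T^{*}$ is precisely what is needed to force $|dX/dt|\to\infty$ and close the existence gap.
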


\begin{proof}
    From the Theorem \ref{thm:thm2}, we have the $Y(t)$ blows up in finite time for sufficiently large initial data $(X_{0},Y_{0})$, that is
\[ \mathop{\lim}_{t \rightarrow T^{*} < \infty} |Y(t)| \rightarrow \infty.\]

We now claim that $T^{\ast \ast}$ the finite-time quenching for prey population is greater than or equal to the blow-up time $T^*$ for predator population $Y(t)$, i.e., $T^{\ast \ast} \ge T^\ast$. To claim this it suffices that 
$\underset{t \rightarrow T^{\ast} < \infty}{\lim}X(t) \neq 0.$ Let's on contrary assume that 
\[\mathop{\lim}_{t \rightarrow T^{*} < \infty} X(t)= 0 \implies \text{From Proposition } \ref{prop.exis.blow.2.2}, \text{ we have}  \quad X(t) \equiv 0 ,\quad\forall \hspace{.05in} \Omega_\epsilon=(T^\ast -\epsilon,T^\ast). \]

Consider, the $Y$ nullclines in $\Omega_\epsilon$, we have
\[ \frac{d Y}{d t}=\left( D- \dfrac{E}{A} \right) Y^2 <0.\]
Therefore, $Y$ is decreasing in $\Omega_\epsilon$, which contradicts the fact that $Y(t)$ blows up in finite time.

Hence, 
\[ \mathop{\lim}_{t \rightarrow T^{*} < \infty} X(t)= \widehat{x} (\neq 0) \quad \& \quad \mathop{\lim}_{t \rightarrow T^{*} < \infty} Y(t) = +\infty.\]
Therefore, we get 
\[ \underset{t \rightarrow T^{\ast} < \infty}{\lim }\left\vert \frac{dX\left( t\right)}{dt}
\right\vert = \underset{t \rightarrow T^{\ast} < \infty}{\lim } \Big\{ R X\left(1-\displaystyle\frac{X(t)}{K}\right) - \dfrac{M X Y}{X^p + C} \Big\}= +\infty. \]
Hence, we have that $T^{\ast \ast}$ the finite-time quenching for prey population is greater than or equal to the blow-up time $T^*$ for predator population $Y(t)$.
\end{proof}
Next we show,

\begin{lemma}
\label{lem:quench12}
    Consider the system \eqref{model}, for any choice of parameters and initial conditions chosen sufficiently large, the prey population $X(t)$ will quench at a finite time $T^{**}$, which is less than or equal to the blow-up time $T^*$ for predator population $Y(t)$.
\end{lemma}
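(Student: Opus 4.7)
The plan is to prove $T^{**} \le T^{*}$ by directly showing $|dX/dt| \to +\infty$ as $t \nearrow T^{*}$; combined with Lemma~\ref{lem:quench11}, this yields the sandwich $T^{**} = T^{*}$ asserted in Theorem~\ref{conj:quench1}. So the argument is really a short calculation whose only subtle prerequisite has already been handled in Lemma~\ref{lem:quench11}.

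First, I would collect the ingredients already at hand. From Theorem~\ref{thm:thm2}, $Y(t) \to \infty$ as $t \to T^{*}$, and from the key step inside the proof of Lemma~\ref{lem:quench11}, $\lim_{t \to T^{*}} X(t) = \widehat{x} \neq 0$, so that $X(t) \ge \widehat{x}/2 > 0$ on some left neighborhood $(T^{*}-\epsilon, T^{*})$. A simple comparison also produces a uniform upper bound on $X$ throughout $[0, T^{*})$: whenever $X \ge K$ the logistic factor is non-positive and the predation term is non-positive, so $dX/dt \le 0$ there; hence $X(t) \le \max(X_{0}, K)$ on $[0, T^{*})$.

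Next, I would read off the divergence of the derivative directly from the right-hand side of the $X$-equation. With $X$ pinched between $\widehat{x}/2$ and $\max(X_{0}, K)$ near $T^{*}$, the logistic term $RX(1 - X/K)$ remains uniformly bounded, while the predation term diverges since $X$ is bounded below away from zero and $Y \to +\infty$:
$$\frac{dX}{dt} = RX\left(1 - \frac{X}{K}\right) - \frac{M X Y}{X^{p}+C} \longrightarrow -\infty \quad \text{as } t \to T^{*}.$$
Therefore $|dX/dt| \to +\infty$ as $t \to T^{*}$, which by the definition of finite-time quenching forces $T^{**} \le T^{*}$, completing the lemma.

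The only genuine obstacle has already been discharged in Lemma~\ref{lem:quench11}, namely ruling out $\lim_{t \to T^{*}} X(t) = 0$, which would have converted the predation term into an indeterminate $0 \cdot \infty$ rather than a clean divergence. With the positive lower limit $\widehat{x}$ secured, and the elementary logistic-type upper bound on $X$, the right-hand side of the $X$-equation hands us the quenching at once.
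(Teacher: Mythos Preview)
Your argument is correct but proceeds differently from the paper. The paper argues by contradiction: assuming $T^{**} > T^{*}$, the quenching condition $|dX/dt|\to\infty$ at $T^{**}$ together with the upper bound on $X$ forces $Y(t)\to\infty$ as $t\to T^{**}$, contradicting the fact that the solution already ceases to exist past $T^{*}$. You instead argue directly, observing that the very computation carried out at the end of Lemma~\ref{lem:quench11}---namely that $X(t)\to\widehat{x}>0$ and $Y(t)\to\infty$ force $|dX/dt|\to\infty$ as $t\to T^{*}$---already exhibits quenching at $T^{*}$, hence $T^{**}\le T^{*}$ immediately. Your route is more economical, essentially noting that the final display of Lemma~\ref{lem:quench11} does double duty and renders a separate contradiction argument unnecessary; the paper's approach, by contrast, keeps the two inequalities logically independent and does not revisit the positivity of $\widehat{x}$. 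Both are valid, and your added remark on the uniform upper bound $X(t)\le\max(X_0,K)$ makes the boundedness of the logistic term explicit where the paper simply asserts it.
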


\begin{proof}
We prove this via contradiction. Assume to the contrary that the quenching time of the prey, $T^{**} > T^{*}$, where $T^{*}$ is the blow-up time of the predator.
   Since $T^{\ast \ast}$ is the finite quenching time, we have $\underset{t \rightarrow T^{\ast \ast} < \infty}{\lim }\left\vert \frac{dX\left( t\right)}{dt}
\right\vert =+\infty$. From Proposition \ref{prop.exis.blow.2.2}, and using the fact that the prey population is bounded from above, we have
\[\underset{t \rightarrow T^{\ast \ast} < \infty}{\lim }  R X\left(1-\displaystyle\frac{X(t)}{K}\right) < \infty.\]
Upon using this information in the prey equation equation, and passing the limit, we get
\[\underset{t \rightarrow T^{\ast \ast} < \infty}{\lim }  \dfrac{X Y}{X^p + C} = +\infty \quad \implies \underset{t \rightarrow T^{\ast \ast} < \infty}{\lim } \quad Y(t) = +\infty .\]
This gives us a contradiction, because if $T^{**} > T^{*}$, where we assume that $T^\ast$ is the finite time of blowup, then the solution will cease to exist past $T^\ast$.

This shows that the prey population $X(t)$ will quench at a finite time $T^{**}$, which is less than or equal to the blow-up time $T^*$ for predator population $Y(t)$.
\end{proof}

The Lemmas \ref{lem:quench11} and \ref{lem:quench12}, used in conjunction prove Theorem \ref{conj:quench1}, giving the equivalence of $T^{**}$ and $T^{*}$.

\section{Delayed ODE}

Adding the delay parameter $\tau$, to the system \eqref{model}, the delayed model is given by 
\begin{equation}
	\left\{\begin{array}{l}
		\displaystyle\frac{d X}{d t}=X\left(1-\displaystyle X(t-\tau)\right) - \dfrac{M X Y}{X^p + C}\vspace{2ex}\\
		\displaystyle\frac{d Y}{d t}=\left( D- \dfrac{E}{X + A} \right) Y^2
	\end{array}\right.
	\label{delay_model}
\end{equation}
subject to initial conditions
\begin{align*}
    X(\theta)&=\phi_1 (\theta) >0, \\
    Y(\theta)&=\phi_2 (\theta) >0, \\
    \theta & \in [-\tau,0); \quad \phi_i (0)>0, \quad i=1,2.
\end{align*}

\begin{theorem}
\label{thm2dm}
Consider the model \eqref{delay_model}. Then


 $Y(t)$ blows up in finite time, that is

\begin{equation}
\label{eq:bu1}
\mathop{\lim}_{t \rightarrow T^{*} < \infty} |Y(t)| \rightarrow \infty,
 \end{equation}

as long as the initial data $(X_{0},Y_{0})$ is sufficiently large.
\end{theorem}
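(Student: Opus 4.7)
The plan is to mirror the proof of Theorem \ref{thm:thm2} essentially verbatim, exploiting the key structural observation that the delay in \eqref{delay_model} appears only in the logistic growth term of the prey and does \emph{not} enter the predator equation at all. Since the $Y$-equation $\frac{dY}{dt} = (D - E/(X+A))Y^2$ depends on $X$ pointwise at the current time (not on $X(t-\tau)$), the formal integration of the predator equation and the subsequent comparison argument carry over without any structural change, provided we have local existence and continuity of $X$.

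First I would invoke standard local existence and uniqueness theory for delay differential equations applied to \eqref{delay_model} with continuous initial history $(\phi_1, \phi_2)$ on $[-\tau, 0]$, yielding a unique classical solution $(X(t), Y(t))$ on some maximal interval $[0, T_{\max})$, together with a blow-up alternative completely analogous to Proposition \ref{prop.exis.blow.2.2}. Then I would integrate the predator equation to obtain
\begin{equation*}
Y(t) = \frac{1}{\dfrac{1}{Y_0} - \displaystyle\int_0^t \left(D - \frac{E}{X(s)+A}\right) ds},
\end{equation*}
and define the continuous auxiliary function $\psi(t) = \frac{1}{Y_0} - \int_0^t (D - E/(X(s)+A))\,ds$. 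By continuity of $X$ at $t=0$ together with $X(0) = \phi_1(0) = X_0$, for $X_0$ taken sufficiently large there exists $\delta > 0$, $\delta < T_{\max}$, such that $X(t) + A > E/D$ on $[0, \delta)$, equivalently $E/(X(t)+A) < D$ on that interval. Copying the chain of inequalities from the proof of Theorem \ref{thm:thm2} then yields
\begin{equation*}
\psi(T^*) \leq \frac{1}{Y_0} + \frac{\delta E}{X_0} - D\delta \leq 0
\end{equation*}
for a suitable $T^* < \delta$ and for $(X_0, Y_0)$ sufficiently large. Combined with $\psi(0) = 1/Y_0 > 0$, the intermediate value theorem produces a zero of $\psi$ at some time $T^{**} \leq T^*$, and a standard comparison argument delivers $|Y(t)| \to \infty$ as $t \to T^{**}$.

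The only genuinely new issue compared with the non-delayed proof is verifying that the continuity window $[0, \delta)$ can be secured uniformly enough for the final inequality to close. Since the delayed prey contribution $X(t-\tau)$ is controlled on $[0, \min\{\tau,\delta\})$ entirely by the prescribed history $\phi_1$, and since $X_0$ may be taken arbitrarily large so that $X_0 + A$ exceeds $E/D$ by an arbitrarily large margin, the required continuity on a fixed short interval persists regardless of $\tau$. This bookkeeping step is where I expect the only mild technical work to live; the rest of the argument is a direct transcription of the non-delayed case.
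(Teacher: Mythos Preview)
Your proposal is correct and matches the paper's own approach exactly: the paper simply states that the proof follows similarly as in the proof of Theorem~\ref{thm:thm2}, and your write-up is precisely that transcription, with the added (and correct) observation that the delay appears only in the prey's logistic term and not in the predator equation, so the integration of the $Y$-equation and the $\psi$-argument carry over unchanged.
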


\begin{proof}
The proof follows similarly as in the proof of Theorem \ref{thm:thm2}.
    
\end{proof}

\begin{figure}[ht]
	{\scalebox{0.49}[0.49]{
			\includegraphics[width=\linewidth,height=4in]{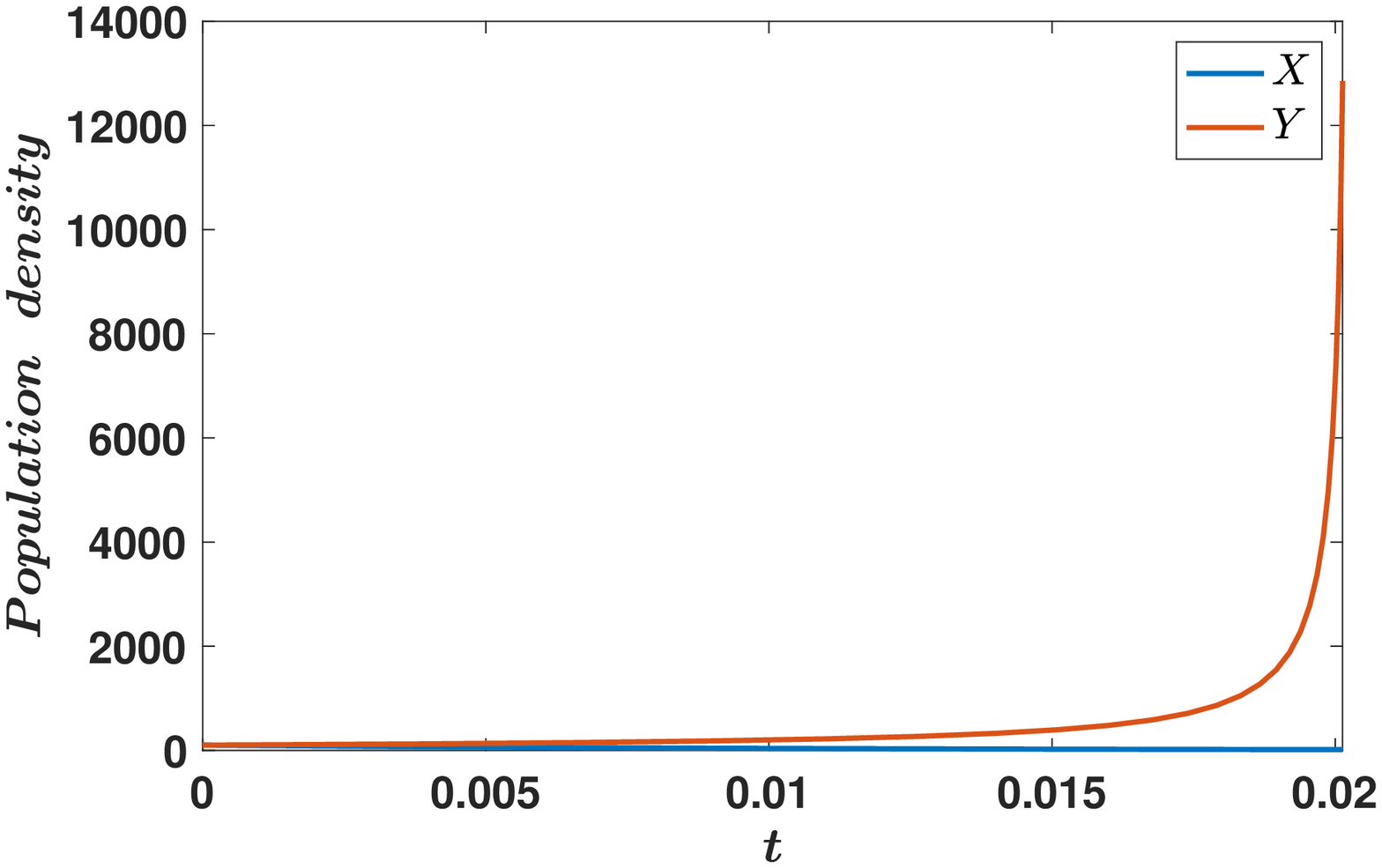}}}
	{\scalebox{0.49}[0.49]{
			\includegraphics[width=\linewidth,height=4in]{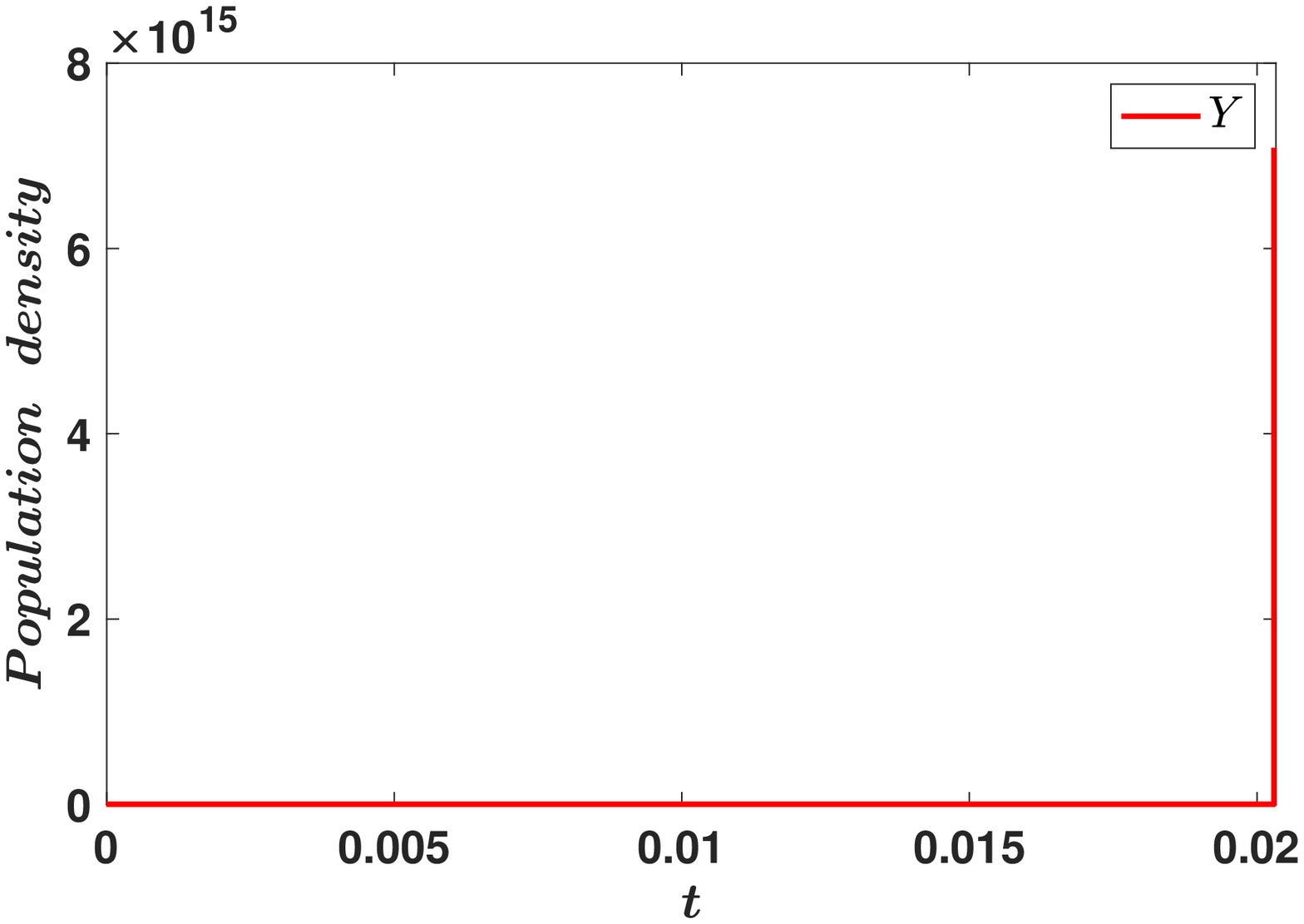}}}
		\caption{Time series simulation of the delayed predator-prey dynamics given by the mathematical model in \eqref{delay_model}. The simulation used specific parameter values: $R=1$, $K=1$, $M=1.2$, $p=2$, $D=0.5$, $E=0.2$, $A=0.2, \tau=1$, and $C=0.3$, with initial data $[X(0),Y(0)]=[100,100]$. The parameter choices ensure that the model satisfies the requirements for a blow-up, as stated by Theorem \ref{delay_ode_blowup}.}
	\label{fig:blowup_delay}
\end{figure}

We now derive a delayed version of \eqref{model} s.t. there is global in-time existence of solutions for all initial conditions. We add the delay parameter $\tau$, to the system \eqref{model}, by considering a gestation effect in the predator, 
\begin{equation}
	\left\{\begin{array}{l}
		\displaystyle\frac{d X}{d t}=X\left(1-\displaystyle X\right) - \dfrac{M X Y}{X^p + C}\vspace{2ex}\\
		\displaystyle\frac{d Y}{d t}= DY(t-\tau)^2 - \dfrac{E}{X + A}  Y^2
	\end{array}\right.
	\label{delay_model1}
\end{equation}
subject to initial conditions
\begin{align*}
    X(\theta)&=\phi_1 (\theta) >0, \\
    Y(\theta)&=\phi_2 (\theta) >0, \\
    \theta & \in [-\tau,0); \quad \phi_i (0)>0, \quad i=1,2.
\end{align*}

\begin{theorem}\label{delay_ode_blowup}
    Consider the system (\ref{delay_model1}), for any choice of a parameter such that $\delta_1>0$, and any initial condition  the predator population will remain bounded for all time.
\end{theorem}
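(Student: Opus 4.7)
The plan is to exploit the structural difference between (\ref{delay_model1}) and the non-delayed system (\ref{model}): in the new predator equation the quadratic reproduction term is now $D\,Y(t-\tau)^{2}$, driven by the delayed state, whereas the self-limitation $-\frac{E}{X+A}Y^{2}$ remains instantaneous. On any time slab $[(n-1)\tau,n\tau]$ the reproduction term is therefore determined by data from the previous slab, and the $Y$-equation becomes a Riccati-type differential inequality with a strictly negative leading quadratic. A method-of-steps argument then propagates an a priori bound forward in time.

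First I would derive an $X$-bound that is independent of $Y$. Since the predation term in the prey equation is non-negative on $\mathbb{R}_{+}^{2}$, one has $\frac{dX}{dt}\le X(1-X)$, and comparison with the scalar logistic equation yields $X(t)\le X_{\max}:=\max\{X(0),1\}$ for every $t\ge 0$. Consequently the instantaneous damping coefficient in the $Y$-equation satisfies $\frac{E}{X(t)+A}\ge \beta:=\frac{E}{X_{\max}+A}>0$ uniformly in $t$; this is the key estimate that drives the Riccati iteration.

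Next, I would set $M_{0}:=\sup_{\theta\in[-\tau,0]}\phi_{2}(\theta)$ and inductively $M_{n}:=\sup_{t\in[(n-1)\tau,n\tau]}Y(t)$ for $n\ge 1$. On the $n$-th slab, $Y(t-\tau)^{2}\le M_{n-1}^{2}$, and combined with the lower bound on the damping this gives
\begin{equation*}
\frac{dY}{dt}\le D\,M_{n-1}^{2}-\beta\,Y^{2}.
\end{equation*}
The comparison ODE $\dot z=D\,M_{n-1}^{2}-\beta z^{2}$ has equilibrium $z^{*}=M_{n-1}\sqrt{D/\beta}$, and any trajectory above $z^{*}$ is strictly decreasing, so the standard comparison principle yields
\begin{equation*}
M_{n}\le\max\bigl\{Y((n-1)\tau),\;M_{n-1}\sqrt{D/\beta}\bigr\}\le M_{n-1}\max\bigl\{1,\sqrt{D/\beta}\bigr\}.
\end{equation*}
The hypothesis $\delta_{1}>0$ is exactly the quantitative condition that forces the contraction factor $\sqrt{D/\beta}$ to be at most $1$ (i.e.\ $D\le\beta$). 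With this in force, the recursion collapses to $M_{n}\le M_{n-1}$, so $(M_{n})$ is non-increasing and $Y(t)\le M_{0}$ uniformly on $[0,\infty)$. Together with the prey bound this rules out alternative (ii) of Proposition \ref{prop.exis.blow.2.2} and gives the claimed global boundedness.

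The principal obstacle is translating the abstract hypothesis $\delta_{1}>0$ into the concrete inequality $D(X_{\max}+A)\le E$ that the Riccati contraction demands; since $X_{\max}$ depends on the initial datum $X(0)$, one has to verify that the definition of $\delta_{1}$ already absorbs this dependence, or else restrict the parameter regime so that the contraction factor remains in $[0,1]$. A secondary technical point is positivity of $Y$, which follows immediately from the lower estimate $\frac{dY}{dt}\ge -\beta Y^{2}$ by comparison with $\dot w=-\beta w^{2}$, ensuring that the sup norms $M_{n}$ are indeed attained by positive values and the induction is well-posed.
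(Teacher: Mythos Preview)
Your method-of-steps framework is sound and in fact already delivers what the paper actually proves, but you have over-read both the conclusion and the hypothesis. The paper's argument is a short contradiction: assume $Y$ blows up at a finite time $T^{*}$, integrate the predator equation over $[T^{*}-1,T^{*}]$, discard the non-positive damping term, and change variables in the delayed integral to obtain
\[
Y(T^{*})\;\le\; D\int_{T^{*}-1-\tau}^{T^{*}-\tau}Y(s)^{2}\,ds\;+\;Y(T^{*}-1),
\]
which is finite because the integrand and the extra term live strictly before the putative blow-up time. No parametric restriction is invoked; the clause ``$\delta_{1}>0$'' in the statement is vestigial and plays no role in the proof, and the conclusion is only \emph{global existence} (no finite-time blow-up), not a uniform-in-time bound.

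Your recursion $M_{n}\le M_{n-1}\max\{1,\sqrt{D/\beta}\}$ already yields exactly this: $M_{n}<\infty$ for every $n$ by induction, irrespective of whether the factor exceeds $1$, so $Y$ is finite on every bounded time interval and blow-up is impossible. The genuine gap in your write-up is the sentence asserting that ``$\delta_{1}>0$ is exactly the quantitative condition that forces $\sqrt{D/\beta}\le 1$''. In the paper $\delta_{1}$ is introduced (see Conjecture~\ref{ode_blowup}) through the inequality $D-\frac{E}{X+A}>\delta_{1}$, which points in the \emph{opposite} direction, and in any case $\beta=E/(X_{\max}+A)$ depends on $X(0)$, so no purely parametric hypothesis can guarantee $D\le\beta$ for arbitrary initial data. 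Drop the contraction claim and state the conclusion as global existence; with that correction your slab-by-slab Riccati comparison is a correct and more constructive alternative to the paper's contradiction argument, with the added benefit of an explicit growth estimate $M_{n}\le M_{0}\bigl(\max\{1,\sqrt{D/\beta}\}\bigr)^{n}$.
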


\begin{proof}
We proceed by contradiction. Assume the predator population blows up at a finite time $t=T^{*}$. Let us integrate the predator equation in the time interval $[T^{*}-1,T^{*}]$. Thus we obtain

\begin{eqnarray}
&& Y(T^{*}) = D\int^{T^{*}}_{T^{*}-1}Y(t-\tau)^2dt - \int^{T^{*}}_{T^{*}-1}\dfrac{E}{X + A}  Y^2dt +  Y(T^{*}-1)\nonumber \\
&& < D\int^{T^{*}}_{T^{*}-1}Y(t-\tau)^2 +Y(T^{*}-1)\nonumber \\
&& = D\int^{T^{*}-\tau}_{T^{*}- 1 - \tau}Y(t)^2 dt +Y(T^{*}-1) \nonumber \\
\end{eqnarray}
this follows via the positivity of solutions and a simple change of time variable. Now using standard estimates, we obtain,

\begin{equation}
Y(T^{*}) \leq D \left(\sup_{T^{*}- 1 - \tau \leq t \leq T^{*}-\tau} Y(t)\right) [(T^{*}-\tau)-(T^{*}-\tau-1)]
    \end{equation}

Via the assumption that blow-up occurs at $t=T^{*}$, we have that 
\begin{equation}
Y(T^{*}) = \infty \leq D \left( \sup_{T^{*}- 1 - \tau \leq t \leq T^{*}-\tau}Y(t) \right) < \infty
    \end{equation}
Thus we have a contradiction. This proves the theorem.
    
\end{proof}

\begin{figure}[ht]
	{\scalebox{0.49}[0.49]{
			\includegraphics[width=\linewidth,height=4in]{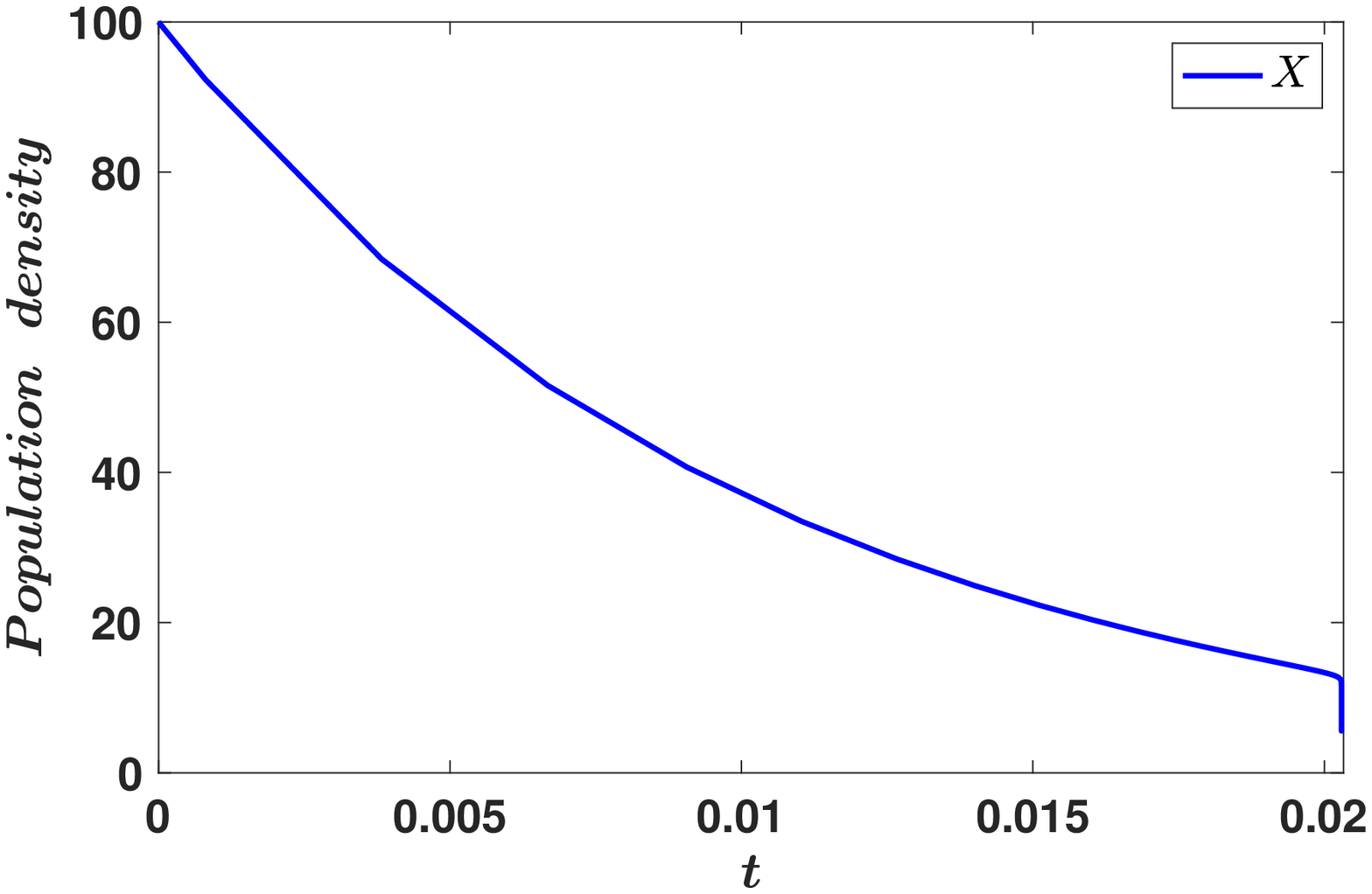}}}
	{\scalebox{0.49}[0.49]{
			\includegraphics[width=\linewidth,height=4in]{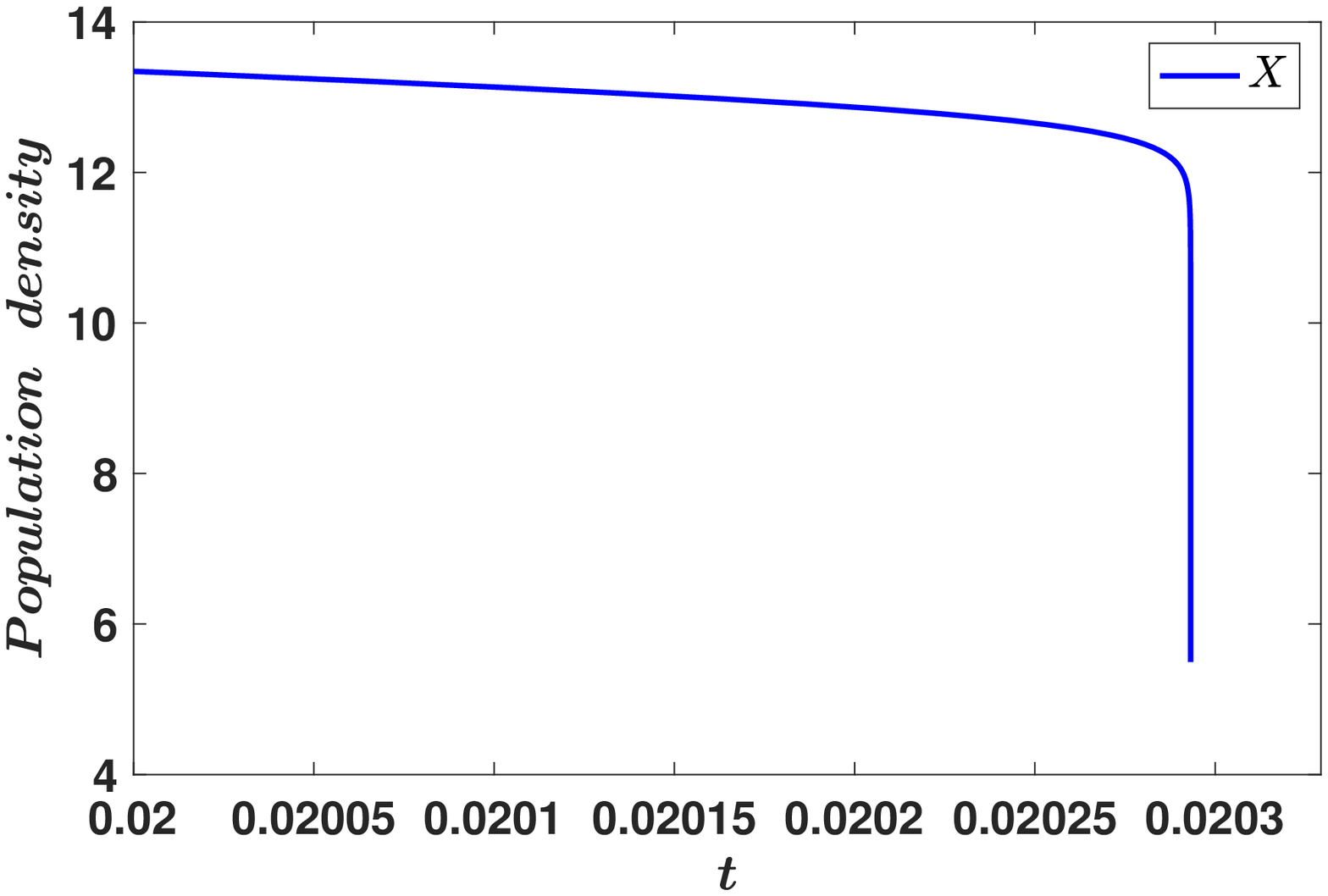}}}
		\caption{Time series simulation of the predator-prey dynamics given by the mathematical model in \eqref{delay_model}. The simulation used specific parameter values: $R=1$, $K=1$, $M=1.2$, $p=2$, $D=0.5$, $E=0.2$, $A=0.2, \tau=1$, and $C=0.3$, with initial data $[X(0),Y(0)]=[100,100]$. }
	\label{fig:quench_delay}
\end{figure}

\begin{Remark}\label{rem:quen2}
Let's numerically validate that predator-prey dynamics given by the mathematical model in \eqref{model} quenches in finite time. Consider the parameters as: $R=1$, $K=1$, $M=1.2$, $p=2$, $D=0.5$, $E=0.2$, $A=0.2, \tau=1$, and $C=0.3$. From the simulation, the blow up time $T^*=2.029316 \times 10^{-2}$, and $X(T^*)=5.4949$ and $Y(T^*)=7.086 \times 10^{15}$. Hence,
\[ \displaystyle\frac{d X}{d t} = -1.532 \times 10^{15} \quad \& \quad \displaystyle\frac{d Y}{d t} =2.3347\times 10^{31}.\]
\end{Remark}

This motivated us to present a Theorem:

\begin{theorem}
\label{conj:quench2}
    Consider the system \eqref{delay_model}, for any choice of a parameter such that the initial condition is sufficiently large, then the prey population $X(t)$ will quench in finite time $T^*$, which is also a blow-up time for predator population $Y(t)$, i.e.,
    \begin{equation*}
\underset{t \rightarrow T^{\ast} < \infty}{\lim }\left\vert \frac{dX\left( t\right)}{dt}
\right\vert =+\infty \quad \& \quad \mathop{\lim}_{t \rightarrow T^{*} < \infty} |Y(t)| \rightarrow \infty,
\end{equation*}%
\end{theorem}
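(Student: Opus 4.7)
The plan is to follow the same two-lemma decomposition used to prove Theorem \ref{conj:quench1}, establishing separately that $T^{**} \geq T^*$ and $T^{**} \leq T^*$, so that the two coincide. The blow-up of the predator at a finite time $T^*$ is already guaranteed by Theorem \ref{thm2dm}, and since the predator equation in \eqref{delay_model} is identical to the one in \eqref{model} (the delay appearing only in the prey logistic term), both lemmas should transfer with only minor modifications.

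For the direction $T^{**} \geq T^*$, I would argue by contradiction, assuming $\lim_{t \to T^*} X(t) = 0$. Then on a left-neighborhood $\Omega_\epsilon = (T^* - \epsilon, T^*)$ the predator equation reduces asymptotically to
\begin{equation*}
\frac{dY}{dt} \approx \left(D - \frac{E}{A}\right) Y^2,
\end{equation*}
which, in the parametric regime relevant to blow-up (where $DA < E$), forces $Y$ to be decreasing on $\Omega_\epsilon$, contradicting $Y(t) \to \infty$ as $t \to T^*$. Therefore $\lim_{t \to T^*} X(t) = \widehat{x} \neq 0$, and substituting into the prey equation
\begin{equation*}
\frac{dX}{dt} = X\bigl(1 - X(t-\tau)\bigr) - \frac{M X Y}{X^p + C}
\end{equation*}
and passing to the limit, the unboundedness of $Y$ forces $|dX/dt| \to \infty$ at $T^*$. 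This gives $T^{**} \leq T^*$, and the same contradiction step rules out any earlier quenching, yielding $T^{**} \geq T^*$ as desired.

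For the reverse inequality $T^{**} \leq T^*$, I would again argue by contradiction: suppose $T^{**} > T^*$. Since the prey is bounded from above on its existence interval and $X(t-\tau)$ is controlled by the history, the logistic term $X(1 - X(t-\tau))$ remains bounded as $t \to T^{**}$. Hence the only way to have $|dX/dt| \to \infty$ is through the functional response, forcing
\begin{equation*}
\lim_{t \to T^{**}} \frac{M X Y}{X^p + C} = +\infty,
\end{equation*}
and therefore $\lim_{t \to T^{**}} Y(t) = +\infty$. But by Proposition \ref{prop.exis.blow.2.2} the solution ceases to exist past $T^*$, so $Y$ cannot be meaningfully evaluated at $T^{**} > T^*$, producing the required contradiction.

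The principal obstacle is the delay term $X(t-\tau)$ in the prey equation: \emph{a priori} it is not clear that $X(t-\tau)$ stays bounded as $t$ approaches $T^*$, and its unboundedness would invalidate the argument that forces the divergence of $|dX/dt|$ to come solely from the functional response. The hardest step is therefore to establish an upper bound on $X$ on $[-\tau, T^*)$ independently of the blow-up of $Y$; this should follow from the differential inequality $dX/dt \leq X(1 - X(t-\tau))$ (using the non-negativity of the predation term) together with standard comparison results for the delayed logistic equation. Once this \emph{a priori} control on $X$ is in hand, the two estimates above combine to give $T^{**} = T^*$.
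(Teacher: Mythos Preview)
Your proposal is correct and takes essentially the same approach as the paper: the paper's own proof is the single line ``The proof follows similarly as in the proof of Theorem~\ref{conj:quench1},'' and you have reproduced exactly that two-lemma decomposition (Lemmas~\ref{lem:quench11} and~\ref{lem:quench12}) adapted to the delayed prey equation. Your explicit identification of the delay term $X(t-\tau)$ as requiring an a~priori bound (via comparison with the delayed logistic inequality $dX/dt \le X(1 - X(t-\tau))$) is a detail the paper leaves implicit, but it does not alter the structure of the argument.
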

\begin{proof}
The proof follows similarly as in the proof of Theorem \ref{conj:quench1}.
    
\end{proof}
\section{Feedback Control}
This section shows that the linear feedback model proposed in  \cite{patra2022effect} to control the system instability is not enough to prevent blow-up for both the delayed and non-delayed models. Consider the linear feedback model proposed in  \cite{patra2022effect}.

\begin{equation}\label{f1model}
	\left\{\begin{array}{l}
		\displaystyle\frac{d X}{d t}= X\left(1- X (t-\tau ) \right) - \dfrac{M X Y}{X^p + C} - u (X-X^*)\vspace{2ex}\\
		\displaystyle\frac{d Y}{d t}=\left( D- \dfrac{E}{X + A} \right) Y^2 - u (Y-Y^*)
	\end{array}\right.
\end{equation}

where $(X^*,Y^*)$ are the interior equilibrium point of system \eqref{f1model} when linear feedback control parameter $u$ is set to be zero. 

The authors in  \cite{patra2022effect} justified that the interior equilibrium for non-delayed \eqref{f1model} is stable if the parameters satisfy
\begin{align}\label{45}
    u > \dfrac{1}{2} \Big( A_{11} - X^* \Big) \quad \& \quad u^2 - (A_{11} -X^* ) u + A_{12} A_{21} >0,
\end{align}
where 
\begin{align}\label{Jaco_param}
    A_{11} = \dfrac{M p Y^* (X^*)^p}{((X^*)^p + C)^2}, \quad A_{12}=\dfrac{m X^*}{(X^*)^p +C} ,\quad \& \quad A_{21} = \dfrac{D^2 (Y^*)^2}{E}.  
\end{align}

\begin{Remark}\label{rem:feed1}
    Let's consider a few parameters that satisfy the parametric restriction given by \eqref{45}, i.e.,
    \[ M=1.2, \quad p=2, \quad C=0.3, \quad D=0.4, \quad E=0.2, \quad A=0.2 \quad \& \quad u=0.02.\]
    Figure~\ref{fig:blowup_non_delay_f1} shows that the solution blows up in a finite time.
\end{Remark}

Moreover, the authors in \cite{patra2022effect} also argued about asymptotic stability of interior for delayed \eqref{f1model} is stable if the parameters for $\lambda=i \omega $ simultaneously satisfies
\begin{equation}\label{48}
	\begin{array}{l}
		u^2 - A_{11} u + A_{12} A_{21} + X^* u \cos (\omega_0 \tau) - \omega_0^2 + X^* \omega_0 \sin (\omega_0 \tau) =0,\\
    (2u-A_{11}) \omega_0 + X^* \omega_0 \cos (\omega_0 \tau) - X^* u \sin (\omega_0 \tau)>0,
	\end{array}
\end{equation}

where $A_{11},A_{12},A_{21},$ and $A_{22}$ are given by \eqref{Jaco_param}.
\begin{Remark}\label{rem:feed2}
    Let's consider a few parameters that satisfy the parametric restriction given by \eqref{48}, i.e.,
    \[ M=1.2, \quad p=2, \quad C=0.3, \quad D=0.4, \quad E=0.2, \quad A=0.2, \quad \tau=2 \quad \& \quad u=0.06.\]
    From Figure~\ref{fig:blowup_delay_f1}, it is evident that the solution blows up in finite time.
\end{Remark}

\begin{figure}[ht]
	{\scalebox{0.49}[0.49]{
			\includegraphics[width=\linewidth,height=4in]{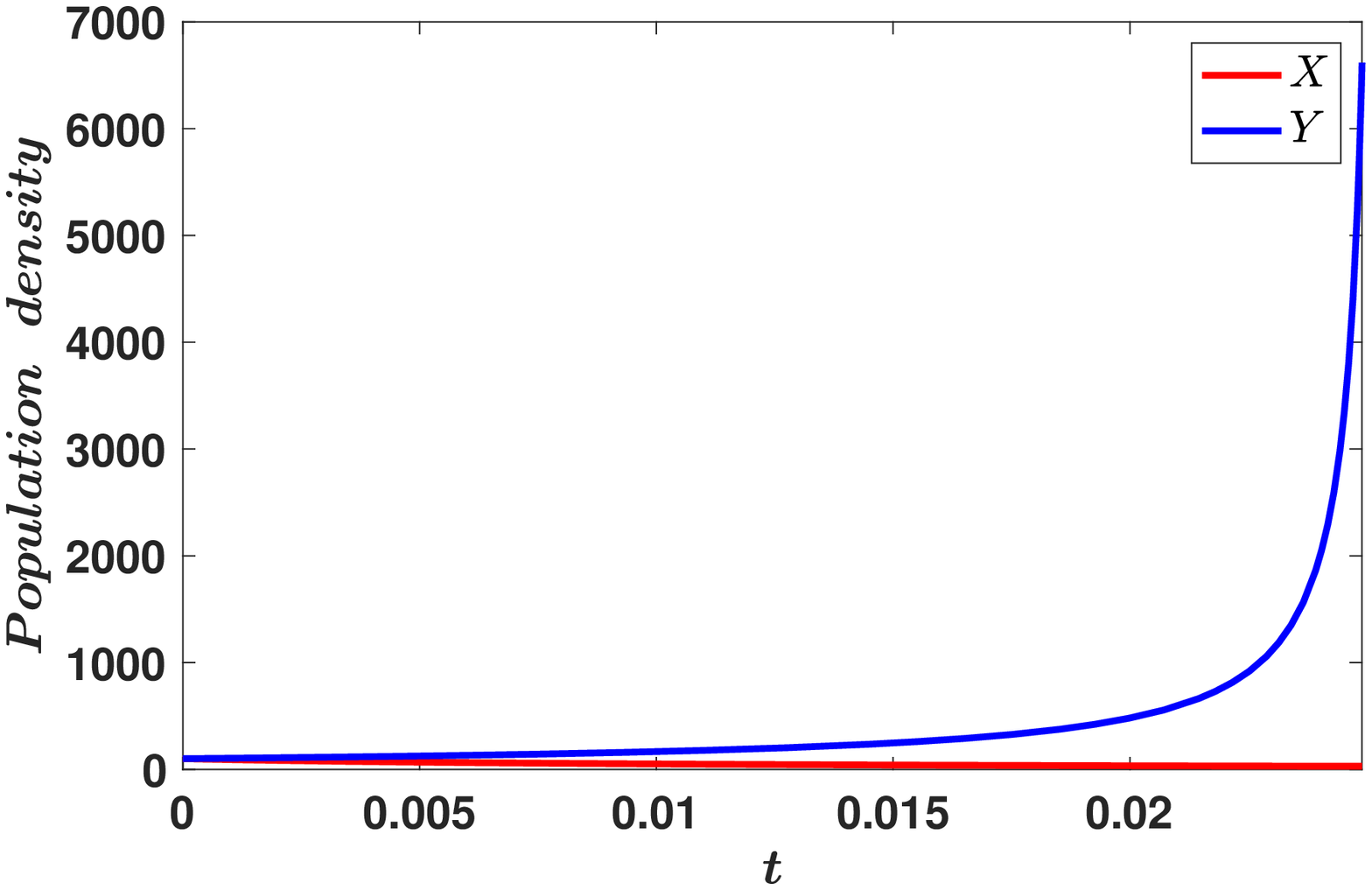}}}
	{\scalebox{0.49}[0.49]{
			\includegraphics[width=\linewidth,height=4in]{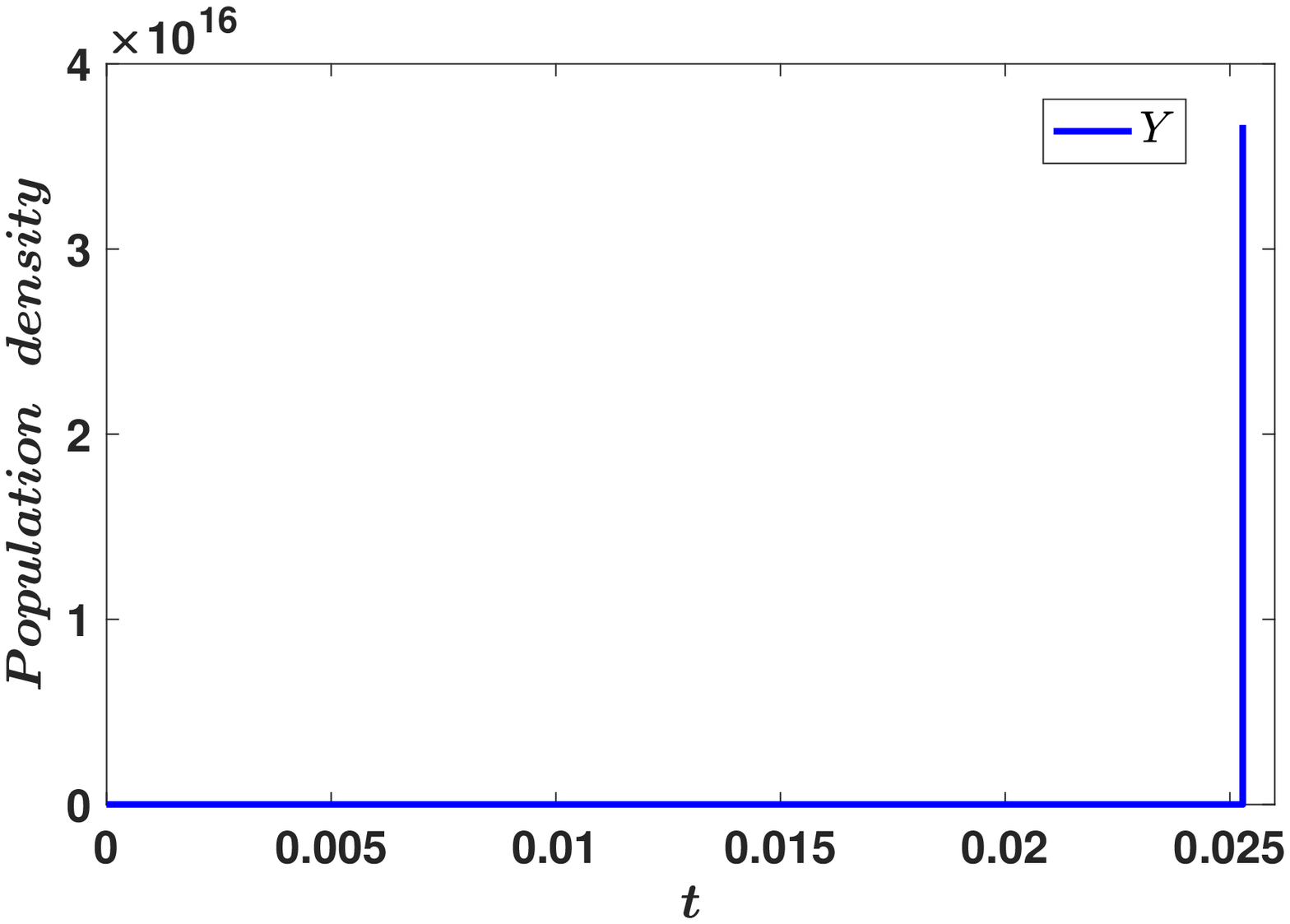}}}
		\caption{Time series simulation of the non-delayed predator-prey dynamics given by the mathematical model in \eqref{f1model}. The simulation used specific parameter values: $M=1.2$, $p=2$, $D=0.4$, $E=0.2$, $A=0.2, u =.02, \tau=0$ and $C=0.3$, with initial data $[X(0),Y(0)]=[100,100]$.}
	\label{fig:blowup_non_delay_f1}
\end{figure}

\begin{figure}[ht]
   { \scalebox{0.49}[0.49]{
			\includegraphics[width=\linewidth,height=4in]{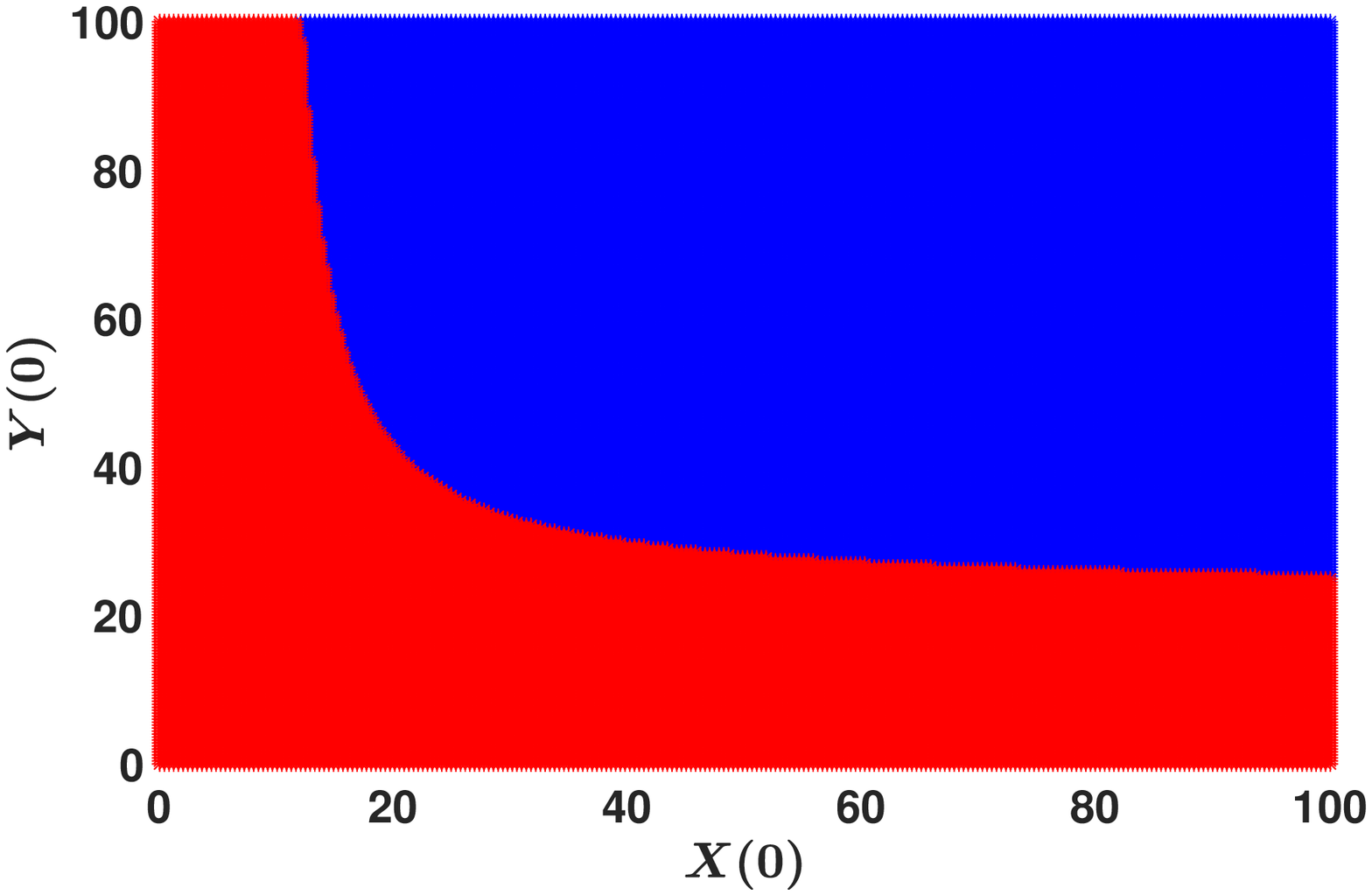}}}
   {\scalebox{0.49}[0.49]{
			\includegraphics[width=\linewidth,height=4in]{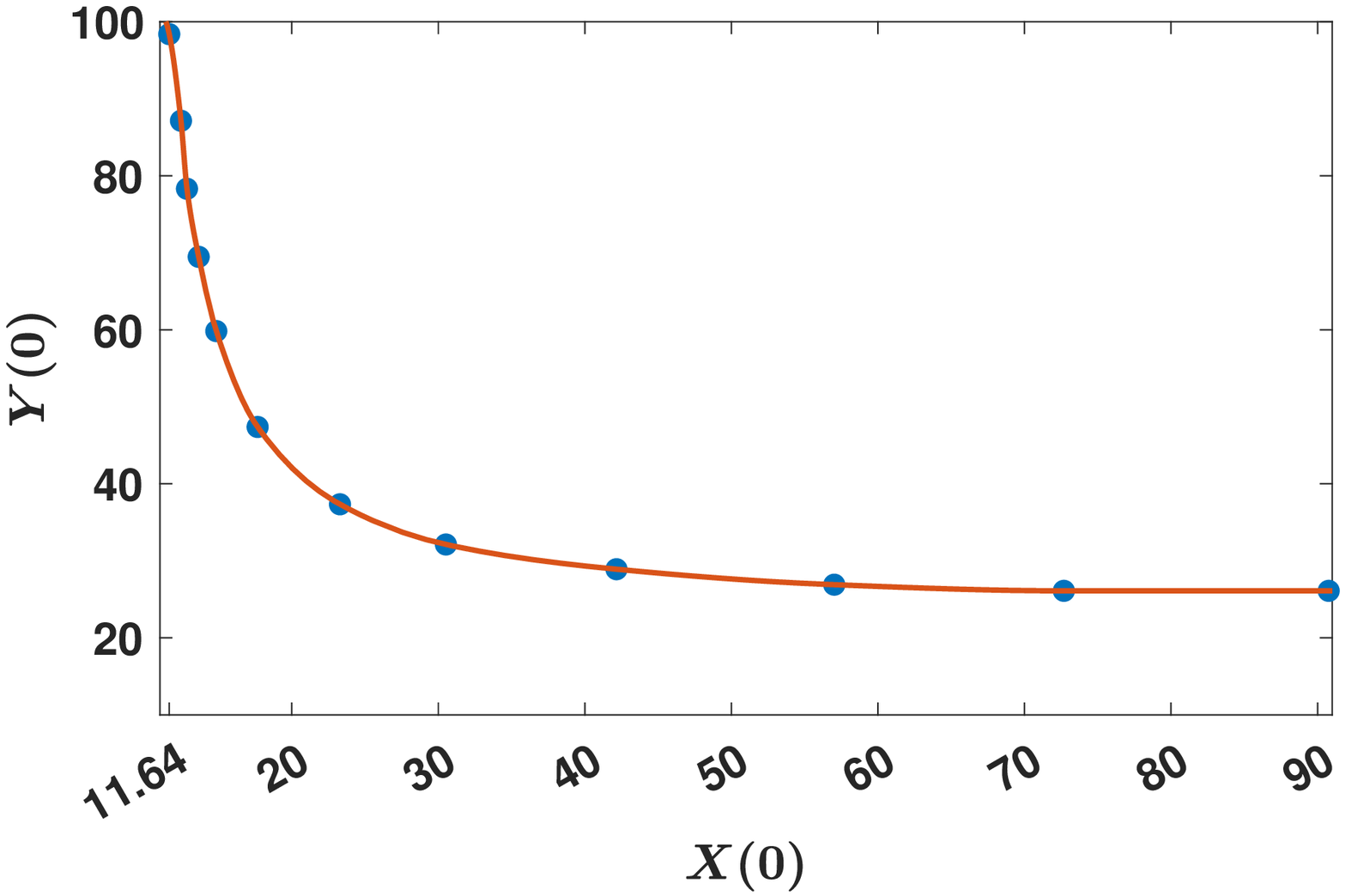}}}
   \caption{(A) Dynamics of non-delayed predator-prey dynamics with feedback control given by the mathematical model in \eqref{f1model} when certain initial conditions are applied. The colored regions indicate two possible outcomes: the red region represents a non-blowup case, while the blue region represents a blow-up case if we choose initial conditions from that region. The simulation used specific parameter values: $M=1.2$, $p=2$, $D=0.4$, $E=0.2$, $A=0.2, u =.02, \tau=0$ and $C=0.3$ (B) Shape-preserving interpolation of the boundary of the region of attraction. The blow-up tolerance parameter is assumed as $10^8.$}
	\label{fig:blowup_non_delay_f1_domain}
\end{figure}

\begin{figure}[ht]
	{\scalebox{0.49}[0.49]{
			\includegraphics[width=\linewidth,height=4in]{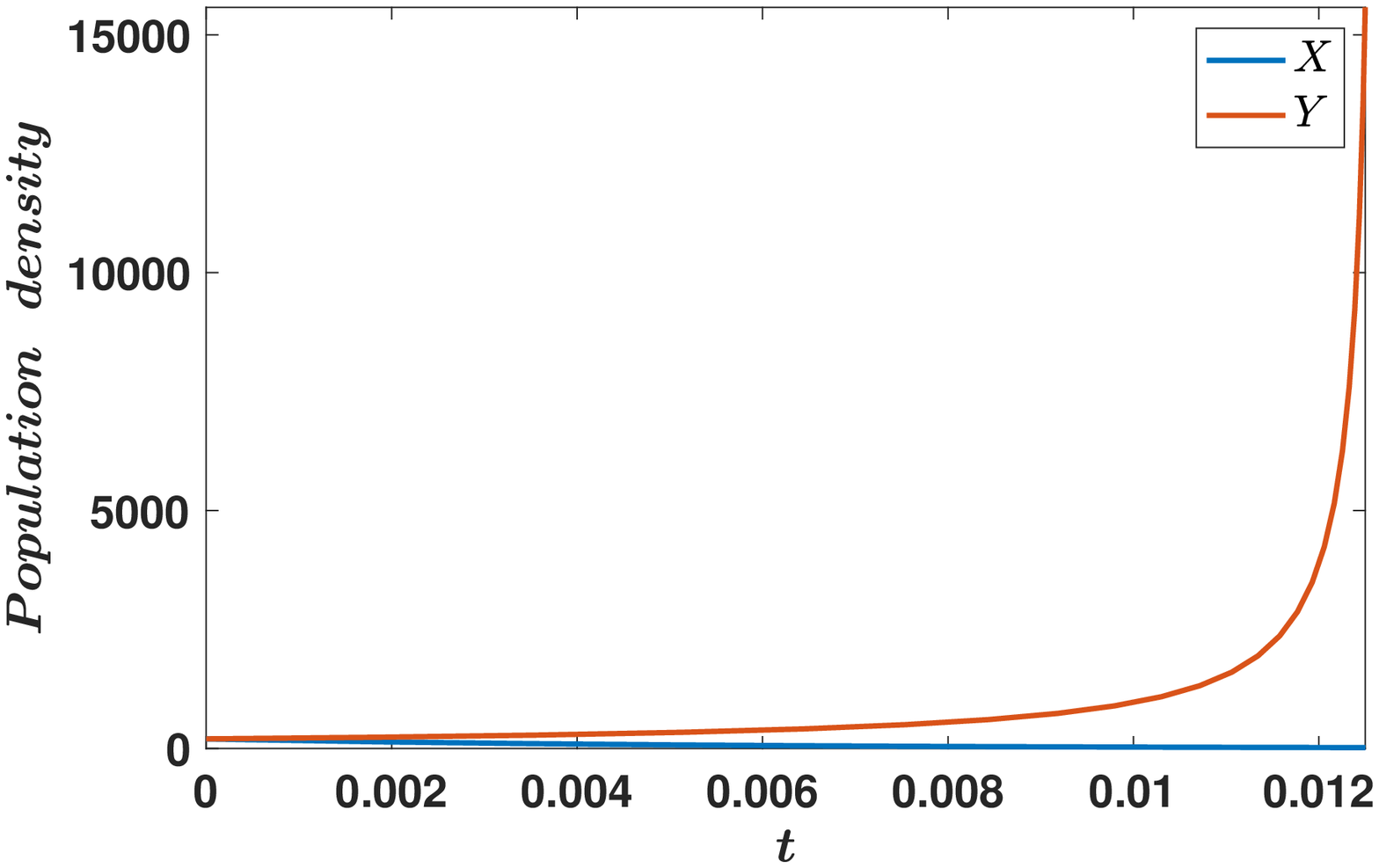}}}
	{\scalebox{0.49}[0.49]{
			\includegraphics[width=\linewidth,height=4in]{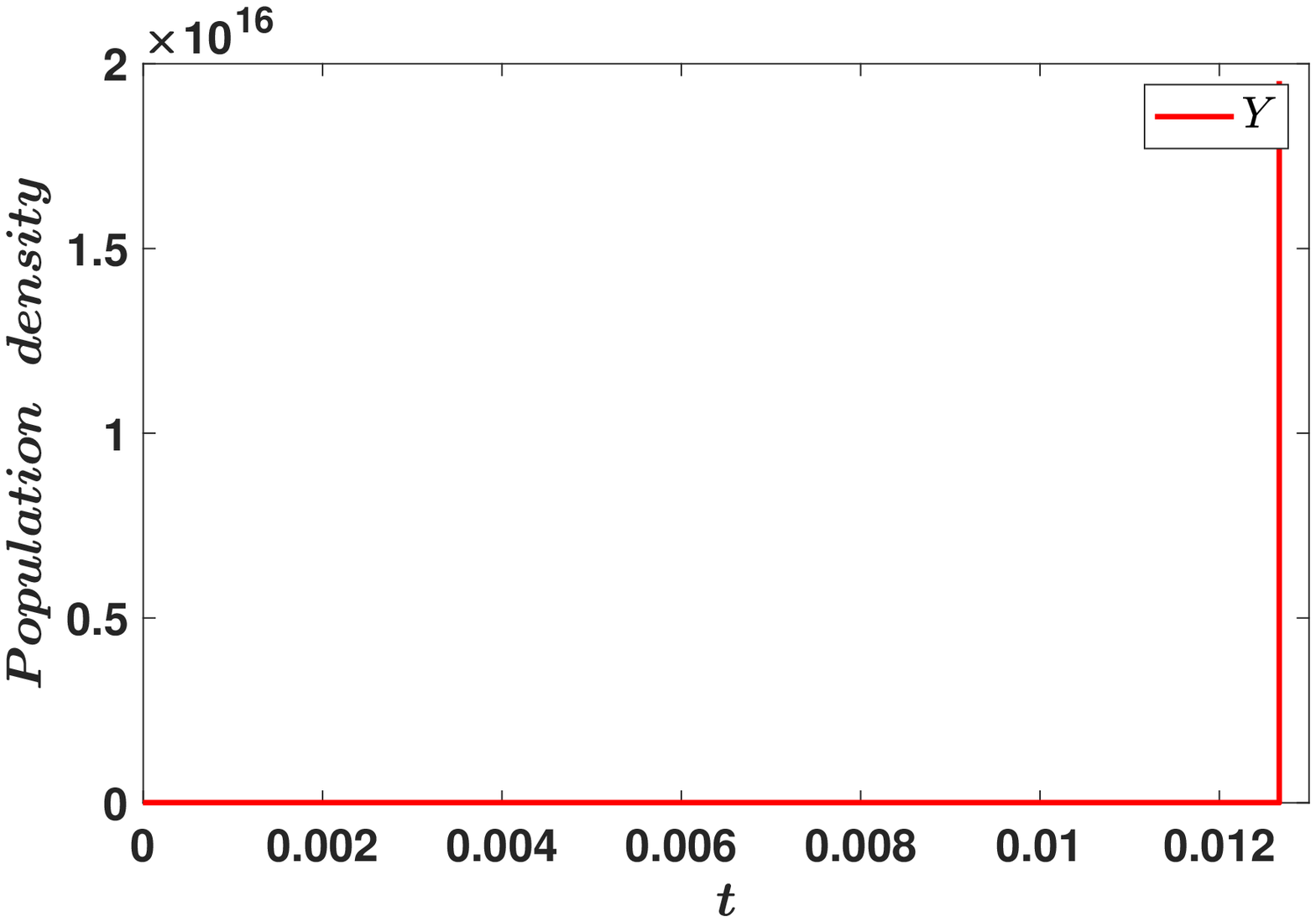}}}
		\caption{Time series simulation of the delayed predator-prey dynamics given by the mathematical model in \eqref{f1model}. The simulation used specific parameter values: $M=1.2$, $p=2$, $D=0.4$, $E=0.2$, $A=0.2, u =.06, \tau=2$ and $C=0.3$,, with initial data $[X(0),Y(0)]=[200,200]$.}
	\label{fig:blowup_delay_f1}
\end{figure}

\section{The blow-up boundary}

In this section we ascertain the curve in the phase space that delineates initial conditions that lead to blow-up in finite time versus those that are attracted to equilibrium or limit cycles, hence stay bounded - this is the \emph{blow-up boundary}.

WLOG assumes $R=K=1$. To prove that system \eqref{model} and \eqref{delay_model} blow up in finite time, we will use the blow-up techniques mentioned in \cite{rana2017,upadhyay2016}.
\begin{conjecture}
\label{ode_blowup}
    Consider the system \eqref{model}, for any choice of parameter such that $\delta_1>0$, and the initial condition satisfies the largeness conditions
    \begin{align}\label{large}
       \ln \Big( \dfrac{|X(0)|}{\frac{E}{D-\delta_1} - A} \Big)  > \frac{1}{\delta_1 |Y_0|},
    \end{align}
    then the predator population will explode to infinity in finite time, i.e.,
    \[ \lim_{t \to T^* < \infty} ||Y|| \to \infty,\]
    where the blow-up time is given by
    \[T^* \le \frac{1}{\delta_1 |Y_0|}\]
\end{conjecture}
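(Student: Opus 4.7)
The plan is to combine the reciprocal substitution used in Theorem \ref{thm:thm2} with a quantitative trapping argument in the spirit of the blow-up techniques of \cite{rana2017,upadhyay2016}. I would first fix the critical threshold
\begin{equation*}
X_{c}\;:=\;\frac{E}{D-\delta_{1}}-A,
\end{equation*}
which is positive and, by the largeness hypothesis \eqref{large}, satisfies $X_{c}<X_{0}$. Whenever $X(t)>X_{c}$ one has $D-\frac{E}{X(t)+A}>\delta_{1}$, so the predator equation yields
\begin{equation*}
\frac{dY}{dt}\;>\;\delta_{1}\,Y^{2}.
\end{equation*}
Comparison with $z'=\delta_{1}z^{2}$, $z(0)=Y_{0}$, produces the pointwise lower bound $Y(t)\ge Y_{0}/(1-\delta_{1}Y_{0}t)$ on every subinterval where $X$ stays above $X_{c}$, singling out $T^{\star}:=1/(\delta_{1}Y_{0})$ as the target upper bound on the blow-up time.

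The heart of the argument is the \emph{invariance window} claim: $X(t)>X_{c}$ for every $t\in[0,T^{\star}]$. Setting $T_{0}:=\inf\{t\ge 0:X(t)\le X_{c}\}$ and arguing by contradiction under the assumption $T_{0}\le T^{\star}$, I would pass to the logarithmic derivative
\begin{equation*}
\frac{d}{dt}\ln X(t)\;=\;\bigl(1-X(t)\bigr)-\frac{M\,Y(t)}{X(t)^{p}+C},
\end{equation*}
and integrate from $0$ to $T_{0}$. On $[0,T_{0}]$ one has $X\ge X_{c}$, so $X^{p}+C\ge X_{c}^{p}+C$; the logistic contribution is controlled by $X(t)\le\max(X_{0},1)$; and the predation contribution is handled by the crude upper comparison $Y(t)\le Y_{0}/(1-DY_{0}t)$ coming from $Y'\le DY^{2}$, which is finite on $[0,T^{\star}]$ because $\delta_{1}<D$ forces $T^{\star}<1/(DY_{0})$. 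Aggregating these estimates, the aim is to obtain
\begin{equation*}
\ln\frac{X_{0}}{X(T_{0})}\;\le\;\frac{1}{\delta_{1}Y_{0}},
\end{equation*}
which, because $X(T_{0})=X_{c}$, directly contradicts \eqref{large}. Hence $T_{0}>T^{\star}$, and the sharper comparison $Y'>\delta_{1}Y^{2}$ on $[0,T^{\star}]$ forces $\lim_{t\nearrow T^{*}}Y(t)=+\infty$ with $T^{*}\le T^{\star}=1/(\delta_{1}Y_{0})$.

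The main obstacle lies in packaging the integral
\begin{equation*}
\int_{0}^{T^{\star}}\!\!\Bigl[\bigl(X(t)-1\bigr)_{+}+\frac{M\,Y(t)}{X(t)^{p}+C}\Bigr]\,dt
\end{equation*}
into a clean numerical constant bounded above by precisely $1/(\delta_{1}Y_{0})$. The logistic part is easy to dominate since $X(t)\le\max(X_{0},1)$ on the trapped interval; the predation part is finite but not obviously proportional to $1/(\delta_{1}Y_{0})$, and some sharpening will likely be needed. Natural options are to replace the threshold $X_{c}$ by $X_{c}(1+\varepsilon)$ and bootstrap the bound on $Y$ using the sharper $\delta_{1}$-inequality rather than the $D$-inequality, or to absorb a computable multiplicative constant $C_{0}>1$ into the largeness hypothesis so that \eqref{large} effectively reads $\ln(X_{0}/X_{c})>C_{0}/(\delta_{1}Y_{0})$. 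Once this quantitative book-keeping is in place, the two-sided comparison on $Y$ closes the proof.
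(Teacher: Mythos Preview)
First, note that in the paper this statement is recorded as a \emph{Conjecture}, not a theorem; the paper supplies only a heuristic case analysis, not a proof. The paper's key step is simply to assert the lower bound $X(t)\ge X(0)e^{-t}$ from the prey equation, which amounts to postulating $\tfrac{d}{dt}\ln X\ge -1$ and thereby discarding the predation term $MY/(X^{p}+C)$ without justification. Once that bound is granted, the paper's route coincides with yours: $X$ stays above $X_{c}=\tfrac{E}{D-\delta_{1}}-A$ on the interval $[0,\ln(X_{0}/X_{c}))$, and comparison with $Y'=\delta_{1}Y^{2}$ forces blow-up before $1/(\delta_{1}Y_{0})$ whenever \eqref{large} holds.

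Your outline is more honest than the paper's heuristic because you try to control the logarithmic derivative term by term, and you correctly flag the predation integral as the unresolved obstacle. There is, however, a genuine slip in your trapping step: you claim that $\delta_{1}<D$ forces $T^{\star}<1/(DY_{0})$, but $\delta_{1}<D$ gives the opposite inequality $T^{\star}=1/(\delta_{1}Y_{0})>1/(DY_{0})$. Hence the crude upper comparison $Y(t)\le Y_{0}/(1-DY_{0}t)$ blows up \emph{before} $T^{\star}$ and cannot be used on all of $[0,T^{\star}]$. This is precisely the dangerous region: once the upper bound on $Y$ is lost, the integral $\int_{0}^{T_{0}}\tfrac{MY}{X^{p}+C}\,dt$ has no a priori finite estimate, let alone one of size $1/(\delta_{1}Y_{0})$. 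Your suggested fixes (inflating $X_{c}$, absorbing a constant into \eqref{large}) do not address this directional issue. In sum, your proposal recovers the paper's heuristic with more care and correctly isolates the gap, but neither you nor the paper closes it, which is exactly why the paper records the result as a conjecture rather than a theorem.
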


Let's analyze this conjecture by considering a few cases:

\textbf{Case I:} Assume $D>\frac{E}{A}$. Hence the result follows trivially, since then the predator nullcline would be of the form
\[ \displaystyle\frac{d Y}{d t} \ge \widehat{C} Y^2, \]
where $\widehat{C}>0$, and hence the predator blows up to infinity in finite time.

\textbf{Case II} Assume  $ \underset{X(t)}{\min} \{ D-\frac{E}{X+A}\} = \delta_1 >0$, then again we have the finite time blow-up for predator population since
\[ \displaystyle\frac{d Y}{d t} \ge \delta_1 Y^2. \]

\textbf{Case II:} Let's work on non-obivous case, i.e., when $D-\frac{E}{X+A}>\delta_1>0$. For this case, we need to make sure the $D- \frac{E}{X+A} $ has the same sign, i.e.,

\begin{align*}
   D- \frac{E}{X+A} >\delta_1 >0 \quad \forall \hspace{.05in} X
\end{align*}

 Equivalently,
 \begin{align}\label{sign}
   X > \frac{E}{D - \delta_1} - A >0
\end{align}

Let's consider the prey nullcline, and using the positivity and logistic comparison arguments, we have $|X(t)|\le 1 \hspace{.05in} \forall \hspace{.05in} t$. On using these estimates in prey nullclines, we have


\[ \displaystyle\frac{d X}{d t}=X\left(1-\displaystyle X \right) - \dfrac{M X Y}{X^p + C}  \quad \implies \quad X \ge X(0) e^{-t}. \]
Using the lower bound for prey population $X$ in \eqref{sign}, we have

\begin{align*}
    |X(0)|e^{-t} > \frac{E}{D - \delta_1} - A \quad
    \implies \quad  \ln \Big( \dfrac{|X(0)|}{\frac{E}{D-\delta_1} - A} \Big) >t
\end{align*}

Moreover, we know that: 

\[ \displaystyle\frac{d Y}{d t} = \delta_1 Y^2, \]

blows up in $t=T^*\le \frac{1}{\delta_1 |Y_0|}.$ Hence, we have

\begin{align*}
    \ln \Big( \dfrac{|X(0)|}{\frac{E}{D-\delta_1} - A} \Big) > \frac{1}{\delta_1 |Y_0|} \quad \implies \quad |Y_0| \ln \Big( \dfrac{|X(0)|}{\frac{E}{D-\delta_1} - A} \Big) > \dfrac{1}{\delta_1}. 
\end{align*}

Therefore under the largeness restriction on the initial data given by \eqref{large}, the predator population will blow up to infinity in finite time.

\begin{conjecture}
\label{con:delay_ode_blowup}
    Consider the system (\ref{delay_model}), for any choice of a parameter such that $\delta_1>0$, and the initial condition satisfies the largeness conditions given by \eqref{large}, then the predator population will explode to infinity in finite time.
\end{conjecture}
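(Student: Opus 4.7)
The plan is to transplant the three-case argument used for Conjecture \ref{ode_blowup} to the delayed system \eqref{delay_model}. Since the $Y$ equation in \eqref{delay_model} is identical to that in \eqref{model}, the two trivial subcases transfer verbatim: if $D>E/A$, then $dY/dt \geq \widehat{C}Y^{2}$ for some $\widehat{C}>0$; and if $\min\{D-E/(X+A)\}\geq \delta_{1}>0$ along the trajectory, then $dY/dt\geq \delta_{1}Y^{2}$. In either situation a Riccati comparison yields finite-time blow-up immediately.

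The substantive case is when ensuring $D-E/(X+A)>\delta_{1}>0$ requires $X>E/(D-\delta_{1})-A$. One must show that the largeness hypothesis \eqref{large} forces $X(t)$ to stay above this threshold on the window $[0,T^{*}]$ with $T^{*}\leq 1/(\delta_{1}|Y_{0}|)$. In the non-delayed argument this was done via the exponential lower bound $X(t)\geq X(0)e^{-t}$, and the aim here is to reproduce such an estimate despite the appearance of $X(t-\tau)$ on the right-hand side of the prey equation.

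My concrete plan is the following. First, a delayed-logistic comparison with the scalar equation $dU/dt=U(1-U(t-\tau))$ and history $\phi_{1}$ yields an a priori upper bound $X(t)\leq M_{\phi}$ on $[0,T^{*}]$; for natural initial histories this gives $M_{\phi}$ close to $1$. Second, factor the prey equation as $dX/dt=X\cdot G(t)$ with
\[
G(t)=1-X(t-\tau)-\frac{MY(t)}{X(t)^{p}+C},
\]
so that $X(t)=X(0)\exp\left(\int_{0}^{t}G(s)\,ds\right)$. The delay contribution $-X(s-\tau)$ is at most $M_{\phi}t$ in absolute value, while the predation contribution $-MY(s)/(X(s)^{p}+C)$ is finite on any subinterval of $[0,T^{*})$ because the Riccati comparison guarantees that $Y$ has only a simple-pole singularity and $\int_{0}^{T^{*}}Y\,ds$ converges. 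Third, combining the resulting exponential lower bound on $X(t)$ with \eqref{large} yields $X(t)>E/(D-\delta_{1})-A$ throughout $[0,T^{*}]$, hence $dY/dt\geq \delta_{1}Y^{2}$ on this interval, producing finite-time blow-up at some $T^{*}\leq 1/(\delta_{1}|Y_{0}|)$.

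The main obstacle I expect is the circular dependence between the lower bound on $X$ and the integral of $MY/(X^{p}+C)$ near $T^{*}$: the lower bound on $X$ depends on the size of this integral, while controlling the integral requires the lower bound on $X$ via the Riccati comparison. The natural remedy is a bootstrap/continuity argument: set $T_{1}=\inf\{t\in[0,T^{*}]:X(t)\leq E/(D-\delta_{1})-A\}$, and derive a contradiction from the assumption $T_{1}\leq T^{*}$ by combining the integrability of $Y$ up to $T_{1}$ with the largeness hypothesis \eqref{large}, reinforced by a constant depending on $M_{\phi}$ and $\tau$ if necessary. Once $T_{1}>T^{*}$ is established, the conclusion follows exactly as in Conjecture \ref{ode_blowup}.
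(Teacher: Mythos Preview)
The paper's own justification is far cruder than yours. It simply repeats the non-delayed heuristic: assert via a delayed-logistic comparison that $X(t-\tau)\le 1$, then write
\[
\frac{dX}{dt}=X\bigl(1-X(t-\tau)\bigr)-\frac{MXY}{X^{p}+C}\ \ge\ -X,
\]
conclude $X(t)\ge X(0)e^{-t}$, and thereafter proceed exactly as in the analysis following Conjecture~\ref{ode_blowup}. The predation term is silently discarded in the displayed inequality, which is precisely why the statement is labelled a conjecture rather than a theorem. Your plan to retain that term through the integrated form $X(t)=X(0)\exp\bigl(\int_{0}^{t}G\bigr)$ is more scrupulous, but it forces you to confront a difficulty the paper simply sidesteps.

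There is a concrete error in your third step: you assert that ``$Y$ has only a simple-pole singularity and $\int_{0}^{T^{*}}Y\,ds$ converges.'' These two claims are incompatible---a simple pole $Y\sim c/(T^{*}-t)$ produces a \emph{logarithmically divergent} integral. Hence $\int_{0}^{t}G(s)\,ds\to-\infty$ as $t\uparrow T^{*}$, and your exponential lower bound on $X$ collapses exactly where it is needed. The bootstrap you sketch afterward is the right reflex, but the contradiction at the first-crossing time $T_{1}$ cannot be manufactured from integrability of $Y$; you would instead need a quantitative upper bound on $Y$ over $[0,T_{1}]$ (e.g.\ from $dY/dt\le DY^{2}$, which is only useful while $T_{1}<1/(DY_{0})$) and then show that \eqref{large}---possibly strengthened by constants depending on $D,M,C,\tau,M_{\phi}$---still forces $X(T_{1})>E/(D-\delta_{1})-A$. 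Closing that loop rigorously is exactly the open content of the conjecture.
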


    Consider the prey nullcline defined for the delayed model \eqref{delay_model}, and make use of the logistics comparison argument, which entails that $X(t-\tau) \le 1 \hspace{.05in} \forall \hspace{.05in} t$. Hence, we have
    \begin{align*}
        \displaystyle\frac{d X}{d t} &=X\left(1-\displaystyle X(t-\tau) \right) - \dfrac{M X Y}{X^p + C}  \ge -X(t) \quad \\
        \implies \quad X &\ge X(0) e^{-t} \quad \forall \hspace{.05in} t 
    \end{align*}
The lower-bound on prey population gives the following inequality,
\begin{align*}
     - \dfrac{1}{X+A} & \ge -\dfrac{1}{X(0) e^{-t} + A} 
\end{align*}
On substituting this in predator nullclines, we have
\begin{align*}
     \displaystyle\frac{d Y}{d t} &\ge \Big( D - \dfrac{1}{X(0) e^{-t} + A } \Big) Y^2
\end{align*}
So, if we have

\[ D - \dfrac{1}{X(0) e^{-t} + A} >\delta_1 >0 \quad \implies \Big( \dfrac{|X(0)|}{\frac{E}{D-\delta_1} - A} \Big) >t. \]

Moreover, we know that:

\[ \displaystyle\frac{d Y}{d t} = \delta_1 Y^2, \]

blows up in $t=T^*\le \frac{1}{\delta_1 |Y_0|}.$

\begin{align*}
    \ln \Big( \dfrac{|X(0)|}{\frac{E}{D-\delta_1} - A} \Big) > \frac{1}{\delta_1 |Y_0|} \quad \implies \quad |Y_0| \ln \Big( \dfrac{|X(0)|}{\frac{E}{D-\delta_1} - A} \Big) > \dfrac{1}{\delta_1}. 
\end{align*}

Given the condition on the initial data as stated in \eqref{large}, the predator population will experience blow-up, meaning it will grow to infinity within a finite time.

\begin{figure}[ht]
    {\scalebox{0.49}[0.49]{
			\includegraphics[width=\linewidth,height=4in]{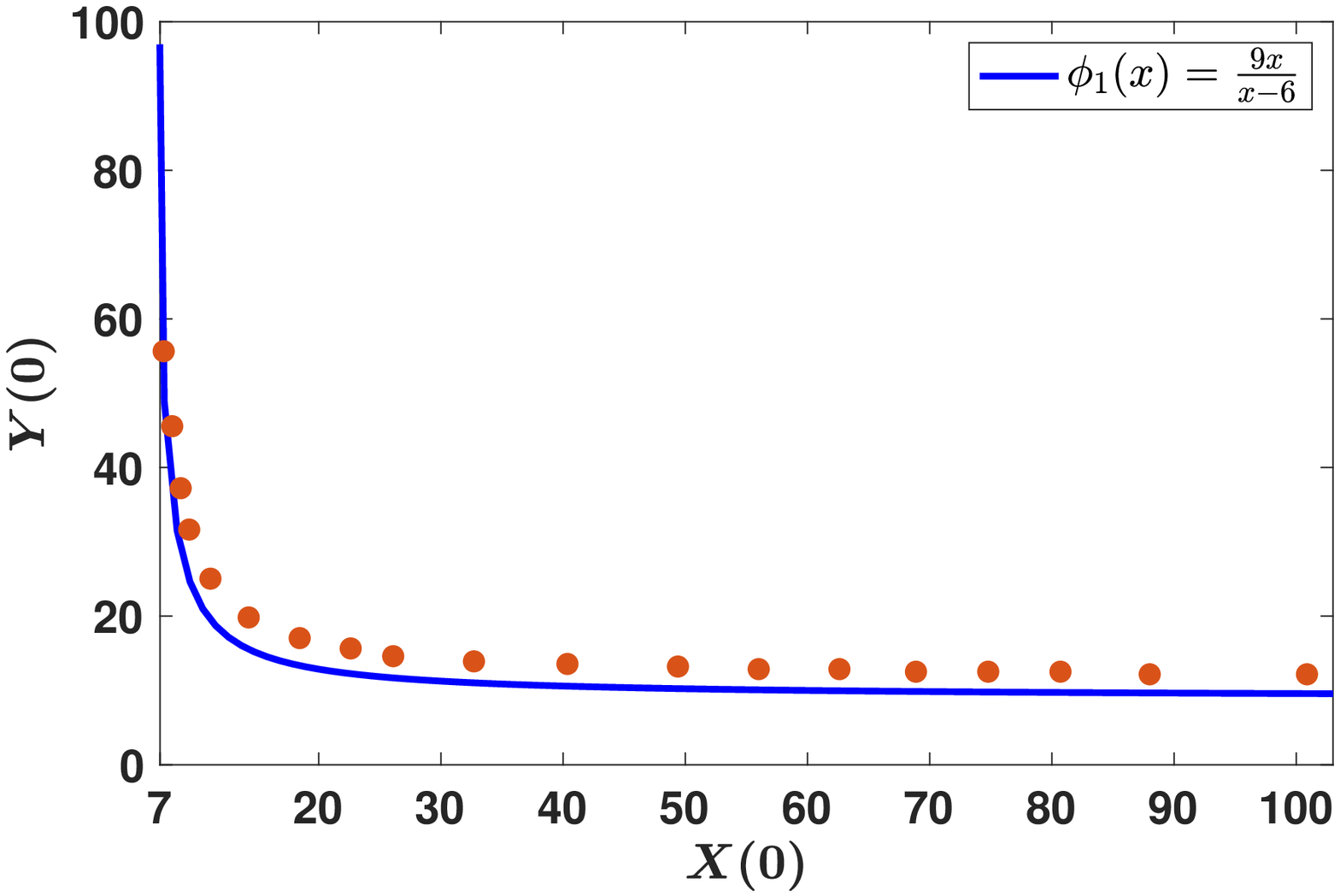}}}
	{\scalebox{0.49}[0.49]{
		\includegraphics[width=\linewidth,height=4in]{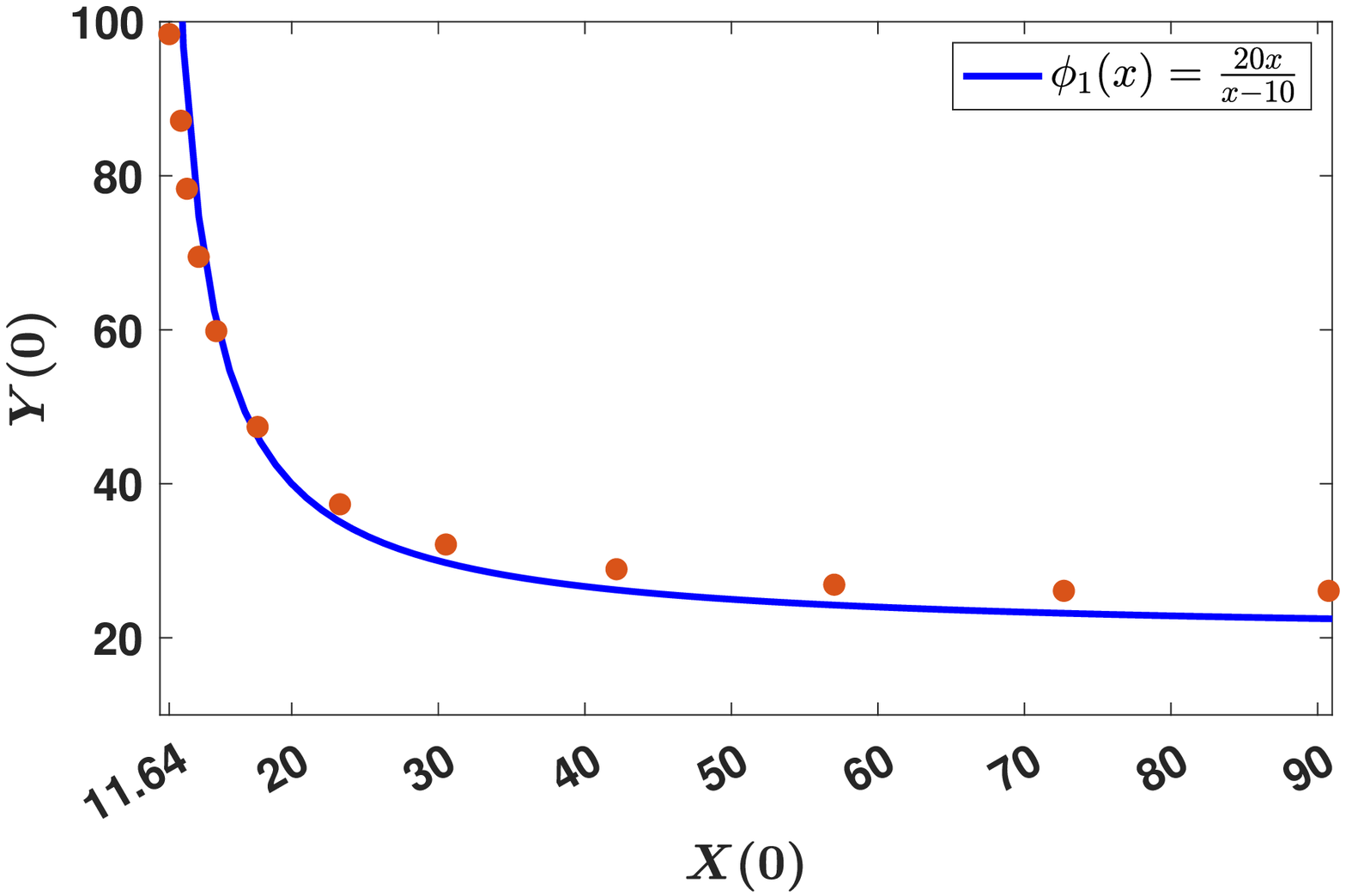}}}
    \caption{Approximation of blow-up boundary with the help of nonlinear function $\phi_1 (x)$ for non-delayed predator-prey dynamics with and without feedback control. The red dots in the figure represent points on the blow-up boundary. In panel (A), the parameters for the non-delayed model without feedback control are the same as those used in Figure \ref{fig:blowup_ic}. Panel (B) parameters are identical to those in Figure \ref{fig:blowup_non_delay_f1_domain}.}

	\label{fig:blowup_phi}
\end{figure}

\section{The basin of attraction}

The basin of attraction for the model under consideration is fairly large. That is initial data much larger than carrying capacity can be attracted to equilibrium. However, despite the non-delayed model being a two-species one (thus chaos is precluded) the structure of the basin of attraction can be richer than a unique interior equilibrium or limit cycle dynamics. We provide a few details next.

We first provide a summary for the stability of the interior equilibrium point in the following theorem.

\begin{theorem}\label{thm:stable_int}
The interior equilibrium of model \eqref{model} is locally asymptotically if $C>\frac{\left(\frac{E}{D}-A\right)^p (A D p+A D+D K p-E p-E)}{E-A D}$.
\end{theorem}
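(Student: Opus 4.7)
The plan is to reduce local asymptotic stability of $(X^*,Y^*)$ to the standard trace--determinant (Routh--Hurwitz) criterion for a planar ODE, by computing the Jacobian of \eqref{model} at the interior equilibrium and analysing the signs of its trace and determinant separately. Since the stated threshold is independent of $\tau$, I would specialize to $\tau=0$; the interior equilibrium is anyway $\tau$-independent.

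First I would locate the equilibrium. The predator nullcline forces $D - E/(X^*+A)=0$, yielding $X^* = E/D - A$, which is positive precisely when $E>AD$; the prey nullcline then gives $Y^* = R(1-X^*/K)((X^*)^p+C)/M$, positive when $X^*<K$. These two existence conditions are assumed in what follows.

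Next I would evaluate the Jacobian $J(X^*,Y^*)$. The crucial simplification is that $D - E/(X^*+A)=0$ kills the partial derivative $\partial_Y\bigl[(D-E/(X+A))Y^2\bigr]$ at equilibrium, so $J_{22}(X^*,Y^*)=0$. Using the identity $MY^*/((X^*)^p+C) = R(1-X^*/K)$ to eliminate $Y^*$, the $(1,1)$ entry simplifies to $J_{11} = Rp(1-X^*/K)(X^*)^p/((X^*)^p+C) - RX^*/K$, while $J_{12} = -MX^*/((X^*)^p+C) < 0$ and $J_{21} = D^2(Y^*)^2/E > 0$. Hence $\det J(X^*,Y^*) = MX^*D^2(Y^*)^2/(E((X^*)^p+C)) > 0$ automatically, and stability is controlled exclusively by the sign of $\mathrm{tr}\,J = J_{11}$.

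The last step is purely algebraic, and is where I expect the main obstacle to lie. Multiplying $J_{11}<0$ through by $K((X^*)^p+C)/(RX^*) > 0$ and collecting terms yields the condition $C > (X^*)^{p-1}\bigl[pK - (p+1)X^*\bigr]$. The bookkeeping required to bring the right-hand side into the form stated in the theorem is somewhat delicate: substituting $X^* = (E-AD)/D$, writing $(X^*)^{p-1} = (E-AD)^{p-1}/D^{p-1}$ and $pK - (p+1)X^* = [pKD - (p+1)(E-AD)]/D$, and then multiplying numerator and denominator by $(E-AD)$ should produce
\begin{equation*}
C > \frac{\left(\tfrac{E}{D} - A\right)^p (ADp + AD + DKp - Ep - E)}{E - AD}.
\end{equation*}
Once this identification is verified, the trace--determinant criterion delivers local asymptotic stability of $(X^*,Y^*)$ and completes the proof.
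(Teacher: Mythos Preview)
Your proposal is correct and follows essentially the same route as the paper: the paper's proof simply refers to the linear stability analysis in the appendix, where the Jacobian at $E_2$ is computed, $J^*_{22}=0$ is observed, $\det J_{E_2}=-J^*_{12}J^*_{21}>0$ is verified, and stability is reduced to the sign of $\tr J_{E_2}=J^*_{11}$. Your simplification of $J_{11}$ via the identity $MY^*/((X^*)^p+C)=R(1-X^*/K)$ to the form $Rp(1-X^*/K)(X^*)^p/((X^*)^p+C)-RX^*/K$ is a slightly cleaner intermediate step than the paper's expanded version, but the algebra leading to the threshold on $C$ is identical.
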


\begin{proof}
The proof follows directly from the linear stability analysis in the appendix.  
\end{proof}

Next, we discuss the critical point of the prey's protection by the environment ($C$) around the interior equilibrium point where changes in stability occur.\\

\begin{theorem}\label{thm:hopf-bif}
The model \eqref{model} experiences a Hopf bifucation around the interior equilibrium point $E_2$ when $C$ crosses some critical value  $C_H$ where $C=C_H=\frac{\left(\frac{E}{D}-A\right)^p (A D p+A D+D K p-E p-E)}{E-A D}$.
\end{theorem}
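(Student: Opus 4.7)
The plan is to verify the two standard conditions for an Andronov--Hopf bifurcation of the planar system \eqref{model} at $E_2=(X^*,Y^*)$: a pair of purely imaginary eigenvalues when $C=C_H$, and a nonzero transversal crossing speed of these eigenvalues as $C$ varies through $C_H$.

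First I would compute the Jacobian $J$ of \eqref{model} at $E_2$ (taking $R=1$ without loss of generality via a time rescaling, and keeping $K$ general). The decisive structural observation is that the interior equilibrium condition from the second equation forces $D-E/(X^*+A)=0$, equivalently $X^*=E/D-A$. Because the predator nullcline carries that very factor as a multiplier of $Y^2$, direct differentiation yields $\partial_Y f_2|_{E_2}=2(D-E/(X^*+A))Y^*=0$. Consequently $\operatorname{tr} J(E_2)=J_{11}$ depends only on the prey equation, while $\det J(E_2)=-J_{12}J_{21}$ with
\begin{equation*}
J_{12}=-\frac{MX^*}{(X^*)^p+C}<0,\qquad J_{21}=\frac{E(Y^*)^2}{(X^*+A)^2}>0,
\end{equation*}
so $\det J(E_2)>0$ automatically. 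Thus the purely-imaginary condition collapses to the single scalar equation $J_{11}=0$.

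Next I would reduce $J_{11}=0$ to a condition on $C$. Differentiating the prey nullcline and using the equilibrium identity $MY^*/((X^*)^p+C)=1-X^*/K$ to eliminate $Y^*$ gives
\begin{equation*}
J_{11}\bigr|_{E_2}=1-\frac{2X^*}{K}-\Bigl(1-\frac{X^*}{K}\Bigr)\frac{C-(p-1)(X^*)^p}{(X^*)^p+C}.
\end{equation*}
Clearing denominators, collecting like powers of $X^*$, and dividing through by $X^*>0$ reduces $J_{11}=0$ to the closed form $C=(X^*)^{p-1}[Kp-(p+1)X^*]$. Substituting $X^*=E/D-A$ and using $E-AD=DX^*$ to rearrange recovers exactly
\begin{equation*}
C_H=\frac{(E/D-A)^p(ADp+AD+DKp-Ep-E)}{E-AD},
\end{equation*}
which is consistent with the boundary of the stability region in Theorem \ref{thm:stable_int}. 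The eigenvalues at $C=C_H$ are therefore $\pm i\omega_0$ with $\omega_0=\sqrt{\det J(E_2)}>0$.

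Finally, for transversality, the critical simplification is that $X^*=E/D-A$ does not depend on $C$, while $Y^*=(1-X^*/K)((X^*)^p+C)/M$ is an affine function of $C$. A direct differentiation then yields
\begin{equation*}
\frac{d}{dC}\operatorname{Re}\lambda(C)\Bigr|_{C_H}=\frac{1}{2}\frac{dJ_{11}}{dC}\Bigr|_{C_H}=-\frac{(1-X^*/K)\,p\,(X^*)^p}{2\bigl((X^*)^p+C_H\bigr)^2}\neq 0,
\end{equation*}
since $0<X^*<K$ is required for biological relevance of the interior equilibrium. The main obstacle is essentially bookkeeping: the algebraic passage from $J_{11}=0$ to the specific closed form of $C_H$ must be carried out carefully so that the cancellation producing the denominator $E-AD$ is visible, rather than an only superficially equivalent rational expression. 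A full nondegeneracy analysis (computing the first Lyapunov coefficient to decide super- versus sub-criticality) is not required by the statement and can be handled separately; the three standard conditions verified above are sufficient to establish the onset of a Hopf bifurcation at $C=C_H$.
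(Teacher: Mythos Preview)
Your proposal is correct and carries out in full the routine verification that the paper explicitly omits (``The proof is routine and thus omitted for brevity''): you correctly exploit $J_{22}=0$ to reduce the purely-imaginary condition to $J_{11}=0$, solve for $C_H$, and check transversality via $\partial_C J_{11}\neq 0$. This is precisely the standard approach the paper has in mind, so there is nothing to compare.
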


\begin{proof}
The proof is routine and thus omitted for brevity.  
\end{proof} 

We shall provide an example to corroborate with the explicit parameter dependent critical value $C_H$ from Theorem \ref{thm:hopf-bif}. \\

\begin{example}\label{exam1}
 Consider the parameters $M=1.2,p=2,D=0.4,E=0.2$, and $A=0.2$ as used in \cite{patra2022effect}. Then    
 \begin{align*}
     C_H &=\frac{\left(\frac{E}{D}-A\right)^p (A D p+A D+D K p-E p-E)}{E-A D}\\
     &=\dfrac{(0.5-0.2)^2(0.16+0.08+0.8-0.4-0.2)}{0.2-0.08} \\
     &=\dfrac{0.0393}{0.12}\\
     &=0.33
 \end{align*}
 The value obtained above corroborates with the $C_H$ value computed with a computer algebra system as depicted in \cite{patra2022effect} (Figure 5(a) on page 13).
\end{example}

From this it appears if the interior equilibrium is a nodal sink its basin of attraction can be fairly large. However if it is a spiral sink, this may not be so. The interior equilibrium could loose stability via a Hopf bifurcation as seen in Theorem \ref{thm:hopf-bif} to yield a limit cycle. Thus fairly large initial conditions could now be attracted to this limit cycle. The situation can be more interesting and even richer dynamics could abound. We numerically see the possibilities of multiple concurrent limit cycles. This was also observed in \cite{patra2022effect}. Thus we  conjecture that the occurrence of two concurrent limit cycles is possible,

\begin{conjecture}\label{con3}
    Consider \eqref{model}. Then there exist single parameter regimes, under which the interior equilibrium point can bifurcate to produce at least two limit cycles.
\end{conjecture}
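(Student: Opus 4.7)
The plan is to upgrade the Hopf bifurcation of Theorem \ref{thm:hopf-bif} to a generalized (Bautin) Hopf bifurcation, and to use the standard unfolding of a Bautin point to produce two concurrent limit cycles along a one-parameter slice. First I would fix all parameters of \eqref{model} except $C$ and recall that, at $C=C_H$, the Jacobian at the interior equilibrium $E_2=(X^\ast,Y^\ast)$ has a pair of purely imaginary eigenvalues $\pm i\omega_0$. Via a translation to $E_2$, a linear change of coordinates putting the Jacobian into real canonical form, and a Taylor expansion of the smooth right-hand sides of \eqref{model} (the maps $X\mapsto MX/(X^p+C)$ and $X\mapsto E/(X+A)$ are analytic near $X^\ast>0$), I would reduce the system to its planar normal form in complex coordinate $z$,
\begin{equation*}
\dot z = (\alpha(C)+i\omega(C))z + \ell_1(C)\,z|z|^2 + \ell_2(C)\,z|z|^4 + O(|z|^7),
\end{equation*}
where $\alpha(C_H)=0$ and $\ell_1,\ell_2$ are the first and second Lyapunov coefficients evaluated at the Hopf point.

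Next I would compute $\ell_1$ using the standard formula of Kuznetsov (Lemma 3.7 in \emph{Elements of Applied Bifurcation Theory}) in terms of the second- and third-order partial derivatives of \eqref{model} at $E_2$. Treating $\ell_1$ as a function of the remaining biological parameters $(M,p,D,E,A,R,K)$ with $C=C_H(M,p,D,E,A,K)$ substituted in, the key analytical step is to show that there is an open set in parameter space on which $\ell_1$ changes sign; by continuity this yields a codimension-two Bautin locus $\ell_1=0$ at which $\ell_2\ne 0$. At such a Bautin point, the generic unfolding theorem produces a smooth curve of saddle-node bifurcations of limit cycles (the ``Bautin curve'') emanating tangentially from the Hopf curve. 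In the wedge bounded by the Hopf curve and the Bautin curve, exactly two limit cycles coexist, one stable and one unstable.

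To finish, I would fix $(M,p,D,E,A,R,K)$ inside that wedge but on the Bautin side where $\ell_1$ and $\ell_2$ have opposite signs, and let $C$ play the role of the single bifurcation parameter. As $C$ crosses $C_H$ from the subcritical side, a small unstable limit cycle is born near $E_2$; as $C$ continues to vary, it meets the saddle-node-of-cycles curve, at which point a second (larger, stable) limit cycle is created. Between the Hopf value and the SNLC value of $C$, the two cycles coexist, establishing the conjectured one-parameter regime. The numerical evidence in Remark \ref{remark:LPC} and Figures \ref{fig:2limit-cycles}--\ref{fig:generalized-hopf} would then serve as the computational confirmation that the Bautin locus is nonempty for ecologically meaningful parameter windows.

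The hard part will be the symbolic computation of $\ell_1$ for the modified Leslie--Gower nonlinearity with a general Cosner--type response $MX/(X^p+C)$: the expression is rational in $X^\ast, Y^\ast$ and transcendental in the exponent $p$, so ruling out $\ell_1\equiv 0$ and exhibiting a sign change analytically requires either restricting to integer $p\in\{1,2\}$ or invoking a computer-algebra reduction. Once a single Bautin point is located (even numerically, as is standard in applications), the continuation of the saddle-node-of-limit-cycles curve and the corresponding one-parameter coexistence window follow from the Bautin normal-form theorem, so the remaining obstruction is genuinely the Lyapunov-coefficient computation rather than the bifurcation-theoretic packaging.
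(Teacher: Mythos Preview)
The paper does not prove Conjecture \ref{con3}; it is stated explicitly as a conjecture and supported only by numerical continuation. The evidence offered consists of MATCONT output: Figure \ref{fig:2limit-cycles} displays two coexisting cycles at fixed parameters together with a fold (LPC) in the family of periodic orbits as $D$ is varied, Figure \ref{fig:generalized-hopf} locates Bautin (GH) points in two-parameter slices, and Remark \ref{remark:LPC} records a nonzero LPC normal-form coefficient. No analytical argument is attempted.

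Your proposal therefore goes well beyond what the paper does. The strategy you sketch---compute $\ell_1$ along the Hopf curve $C=C_H$, exhibit a sign change in the remaining parameters, check $\ell_2\neq 0$ at the resulting Bautin point, and then slice the standard two-cycle wedge of the Bautin unfolding by a one-parameter path---is the correct analytical route and is precisely the mechanism the paper's numerics point to (their GH points in Figure \ref{fig:generalized-hopf} are exactly the codimension-two loci you are after). Your own caveat about the symbolic computation is the real obstruction: for general $p$ the expression for $\ell_1$ is intractable, so a clean argument almost certainly requires fixing $p=2$, which is the case used throughout the paper's simulations. One minor correction to your bifurcation narrative: the saddle-node-of-cycles curve is where the two cycles \emph{collide and annihilate}, not where the second cycle is created. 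Traversing the wedge in one direction, both cycles are born simultaneously at the SNLC and the inner unstable one is absorbed into $E_2$ at the subcritical Hopf; in the other direction the outer stable cycle already exists when the Hopf line is crossed. Either way the two-cycle interval along your $C$-path is nonempty, so this does not affect the validity of the plan, only the description of the transition sequence.
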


This is also possible via varying two parameters.

\begin{conjecture}\label{con4}
    Consider \eqref{model}. Then there exist two parameter regimes, under which a Bautin bifurcation is possible.
\end{conjecture}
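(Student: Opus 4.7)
The plan is to exhibit a Bautin (generalized Hopf) point by tracing the Hopf curve guaranteed by Theorem \ref{thm:hopf-bif} through a second parameter and locating a place where the first Lyapunov coefficient changes sign. First, I would fix one parameter (say $p$ or $A$) as a second bifurcation parameter alongside $C$, and treat the interior equilibrium $(X^{*},Y^{*})$ as an implicit function of these two parameters. The Hopf locus $\mathcal{H}$ in the two-parameter plane is obtained from the conditions $\operatorname{tr}(J)=0$ and $\det(J)>0$ at the equilibrium, where $J$ is the Jacobian of the vector field of \eqref{model} evaluated at $(X^{*},Y^{*})$. The explicit expression $C_{H}(A,D,E,K,p)$ already supplied in Theorem \ref{thm:hopf-bif} parameterizes this curve, so $\mathcal{H}$ is available in closed form.

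Second, along $\mathcal{H}$ I would compute the first Lyapunov coefficient $\ell_{1}$. The standard recipe is to shift the equilibrium to the origin, put the linearization into its real Jordan form $\bigl(\begin{smallmatrix}0 & -\omega \\ \omega & 0\end{smallmatrix}\bigr)$ via a linear change of coordinates, expand the nonlinearity up to cubic order, and apply the classical Bautin formula
\begin{equation*}
\ell_{1} \;=\; \tfrac{1}{16}\bigl(f_{xxx}+f_{xyy}+g_{xxy}+g_{yyy}\bigr)
+ \tfrac{1}{16\omega}\bigl[f_{xy}(f_{xx}+f_{yy}) - g_{xy}(g_{xx}+g_{yy}) - f_{xx}g_{xx} + f_{yy}g_{yy}\bigr],
\end{equation*}
evaluated at the equilibrium with $f,g$ the transformed right-hand sides of \eqref{model}. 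Because the nonlinearities $MXY/(X^{p}+C)$ and $(D-E/(X+A))Y^{2}$ depend non-trivially on $p$, the coefficient $\ell_{1}$ is a genuinely non-constant function of position along $\mathcal{H}$; in particular, in the regime $E/A > D$ one expects $\ell_{1}>0$ at the parameter choice of Example \ref{exam1} (this matches the observation of two concurrent limit cycles in \cite{patra2022effect}), while for $p$ close to $1$ and large $A$ the cubic terms thin out and one can argue $\ell_{1}<0$. The intermediate value theorem then yields a $(C_{B},p_{B})\in\mathcal{H}$ at which $\ell_{1}=0$.

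Third, I would verify the two non-degeneracy conditions for a genuine Bautin point: the second Lyapunov coefficient $\ell_{2}(C_{B},p_{B})\neq 0$, and the map $(C,p)\mapsto (\operatorname{tr} J,\,\ell_{1})$ is a local diffeomorphism at $(C_{B},p_{B})$ (transversality). The first is checked by a direct, if tedious, evaluation of the fifth-order normal-form coefficient; the second reduces to non-vanishing of a $2\times 2$ Jacobian determinant that can be computed symbolically. Once both are confirmed, Kuznetsov's Bautin bifurcation theorem (see Elements of Applied Bifurcation Theory, Ch.~8) gives the local bifurcation diagram, with a saddle-node of limit cycles curve emanating tangentially from $\mathcal{H}$ at $(C_{B},p_{B})$, producing two concurrent limit cycles on one side.

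The main obstacle is unquestionably the symbolic evaluation of $\ell_{1}$ and especially $\ell_{2}$: the presence of the generalized response $X^{p}+C$ prevents clean cancellations, and the algebra must be carried out on a computer algebra system. A secondary difficulty is ensuring that the sign change of $\ell_{1}$ occurs at a point where the interior equilibrium is still biologically admissible (i.e.\ $X^{*},Y^{*}>0$) and where $\det J>0$, so that $\mathcal{H}$ is actually Hopf rather than a Bogdanov–Takens degeneracy. For this reason the conjecture is stated rather than proved, and the numerical evidence in Figure \ref{fig:generalized-hopf} is offered as corroboration pending the full symbolic verification.
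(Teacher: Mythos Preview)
The paper does not prove this statement at all: it is posed as a conjecture and supported purely by numerical continuation (MATCONT) in Figure \ref{fig:generalized-hopf}, where GH (generalized Hopf) points are located in the $(D,A)$, $(D,C)$, and $(D,K)$ planes. No Lyapunov coefficients are computed symbolically, no sign change is argued analytically, and no non-degeneracy conditions are checked by hand. Your final paragraph correctly anticipates this, so in that sense your proposal is consistent with the paper's treatment.

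Where you differ is in scope and ambition. The paper's ``argument'' is: run a two-parameter continuation of the Hopf curve and report the GH label that the software returns. Your outline instead sketches the genuine analytical route---parameterize the Hopf locus via the closed-form $C_H$ of Theorem \ref{thm:hopf-bif}, evaluate $\ell_1$ along it, invoke the intermediate value theorem for a zero, then check $\ell_2\neq 0$ and transversality. This is the right template for an actual proof and would, if carried out, upgrade the conjecture to a theorem; the paper makes no attempt at any of these steps. Two cautions on your sketch: first, your heuristic claims about the sign of $\ell_1$ (positive at the Example \ref{exam1} parameters, negative for $p$ near $1$ and large $A$) are plausible but unsupported---you would need either a symbolic evaluation or a rigorous asymptotic argument, not an appeal to the existence of two cycles in \cite{patra2022effect}. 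Second, the paper's numerics vary $D$ against $A$, $C$, or $K$ rather than your choice of $(C,p)$; this is immaterial for the conjecture as stated, but if you intend to match the paper's evidence you should use one of their parameter pairs.
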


Further these could collide and annihilate each other,

\begin{conjecture}\label{con5}
    Consider \eqref{model}. Then there exist parameter regimes, under which a cyclical fold bifurcation is possible.
\end{conjecture}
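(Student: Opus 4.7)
The plan is to piggyback on the Hopf bifurcation from Theorem \ref{thm:hopf-bif} and the hypothesized Bautin bifurcation of Conjecture \ref{con4}, since a fold bifurcation of limit cycles (also called a limit point of cycles, LPC, or cyclical fold) is one of the generic codimension-one bifurcations that emanates as a curve from a codimension-two Bautin point in parameter space. Concretely, I would (i) linearize \eqref{model} about the interior equilibrium $E_2$, (ii) specialize to the Hopf curve $C = C_H(\text{other parameters})$ given in Theorem \ref{thm:hopf-bif}, (iii) compute the first Lyapunov coefficient $\ell_1$ along this curve as a function of a second parameter (for instance $D$ or $E$), (iv) locate a value at which $\ell_1 = 0$ while $\ell_2 \neq 0$ and the transversality condition on $(\ell_1, \text{Re}(\lambda))$ with respect to the parameter pair is non-degenerate, and (v) invoke the normal form unfolding of the Bautin singularity (Kuznetsov, Elements of Applied Bifurcation Theory, Ch.~8) to conclude that a smooth curve of LPC bifurcations bifurcates from this point.

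To carry out (i)--(ii), I would use that the Jacobian entries $A_{11}, A_{12}, A_{21}$ are already recorded in \eqref{Jaco_param}; the extra ingredient is $A_{22}$ at the equilibrium, which vanishes because $Y^\ast$ satisfies $D - E/(X^\ast + A) = 0$, leaving $\text{tr}(J) = -A_{11}$ and $\det(J) = A_{12}A_{21}$ purely at the Hopf boundary. For (iii), I would project the system onto the centre manifold at the Hopf point, put it in complex normal form $\dot z = i\omega_0 z + g_{20}z^2/2 + g_{11}z\bar z + g_{02}\bar z^2/2 + g_{21}z^2\bar z/2 + \cdots$, and extract $\ell_1$ via the standard formula
\begin{equation*}
\ell_1 = \tfrac{1}{2\omega_0}\,\text{Re}\!\left(ig_{20}g_{11} + \omega_0 g_{21}\right).
\end{equation*}
Then in (iv), a one-parameter sweep of $\ell_1$ along the Hopf curve should exhibit a sign change; by the intermediate value theorem a zero exists, and numerical continuation can be used to certify that $\ell_2 \neq 0$ there. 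This yields the Bautin point demanded by Conjecture \ref{con4} and, simultaneously, supplies the footing for Conjecture \ref{con5}.

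For the final step (v), the Bautin normal form in truncated polar coordinates reads $\dot r = r(\beta_1 + \beta_2 r^2 + s r^4)$ with $s = \text{sign}(\ell_2) = \pm 1$, and its cycle equation $\beta_1 + \beta_2 r^2 + s r^4 = 0$ has a double positive root along the parabola $\beta_2^2 = 4s\beta_1$ in $(\beta_1, \beta_2)$-space with $s\beta_1 < 0$. This parabola is precisely the LPC curve: on it, two limit cycles collide and annihilate. Pulling this back through the parameter-to-normal-form diffeomorphism established in the Bautin unfolding theorem produces, in the original $(C, D)$ (or analogous) parameter plane, a smooth curve of cyclical fold bifurcations meeting the Hopf curve tangentially at the Bautin point, which is what Conjecture \ref{con5} asserts.

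The main obstacle, as in Conjecture \ref{con4}, will be the closed-form computation of $\ell_1$ and $\ell_2$: the non-polynomial nonlinearities $M X Y / (X^p + C)$ with general $p$ and the quadratic predator birth term $(D - E/(X+A))Y^2$ make the multilinear forms $B(\cdot,\cdot)$ and $C(\cdot,\cdot,\cdot)$ algebraically heavy, and the second Lyapunov coefficient requires fourth- and fifth-order derivative data plus solving two auxiliary resolvent equations. I anticipate handling generic $p$ symbolically is not tractable, so the realistic route is to fix $p=2$ (the Holling IV case already highlighted in the paper), derive $\ell_1$ and $\ell_2$ in closed form there, and then verify the Bautin transversality numerically via MatCont; the LPC branch is then certified both by the normal-form theorem and by direct continuation of folds of cycles, which simultaneously yields the numerical corroboration advertised in Remark \ref{remark:LPC} and Figure \ref{fig:generalized-hopf}.
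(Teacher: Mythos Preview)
The paper does not prove this statement; it is left as a conjecture. The only support offered is numerical: Remark~\ref{remark:LPC} reports that MATCONT continuation detects limit point cycles at $D=0.4674$ and $D=0.4681$ with nonzero normal-form coefficient, and Figure~\ref{fig:2limit-cycles} displays the two coexisting cycles together with the period-versus-$D$ curve exhibiting the fold. No linearization, no Lyapunov coefficient computation, and no normal-form argument appears anywhere in the paper for this item.

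Your proposal is therefore not a comparison target but a genuine upgrade: you outline the standard analytical route (Hopf curve $\to$ first Lyapunov coefficient $\ell_1$ as a function of a second parameter $\to$ zero of $\ell_1$ with $\ell_2\neq 0$ $\to$ Bautin unfolding $\to$ LPC parabola $\beta_2^2 = 4s\beta_1$), which is exactly how one would convert Conjectures~\ref{con4} and~\ref{con5} into theorems. The normal-form reasoning in your step~(v) is correct and is precisely the mechanism by which a cyclical fold curve is guaranteed to emanate tangentially from a nondegenerate Bautin point.

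Two caveats. First, your argument for Conjecture~\ref{con5} is conditional on Conjecture~\ref{con4}, so the two stand or fall together; the paper treats both as open. Second, you yourself note that the $\ell_1,\ell_2$ computations for general $p$ are intractable and that you would fall back on fixing $p=2$ and certifying transversality ``numerically via MatCont.'' At that point your proof collapses to the same evidentiary status as the paper's: a numerically located Bautin point plus a numerically continued LPC branch. If you want a result that is strictly stronger than what the paper offers, you must carry the $\ell_1$ sign change and the $\ell_2\neq 0$ check through symbolically (even if only for $p=2$ and a one-parameter slice of the remaining constants), or else frame the outcome explicitly as a computer-assisted proof with rigorous interval-arithmetic bounds on the Lyapunov coefficients rather than floating-point MATCONT output.
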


\begin{figure}[ht]
{\scalebox{0.49}[0.49]{
			\includegraphics[width=\linewidth,height=4in]{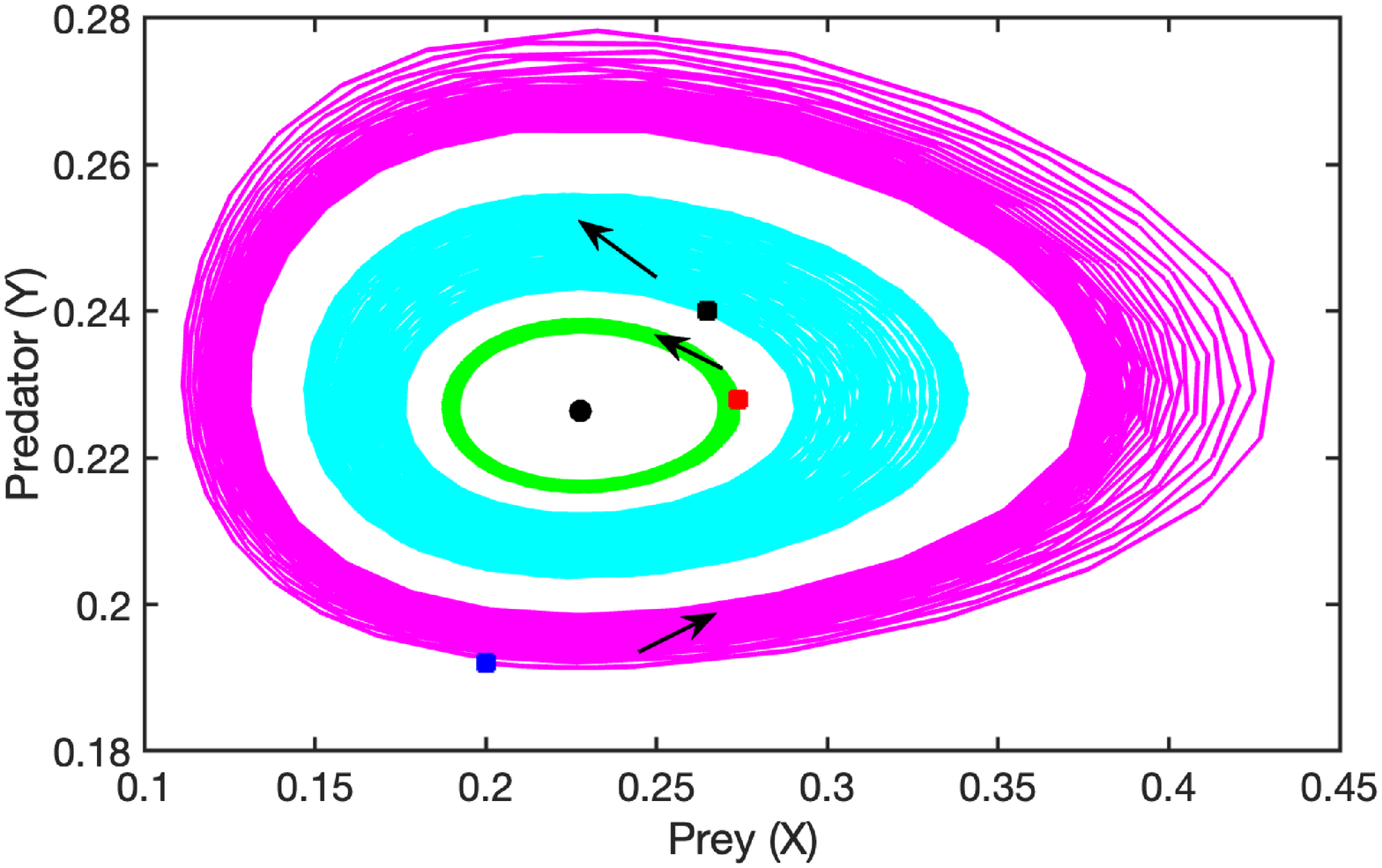}}}
	{\scalebox{0.49}[0.49]{
			\includegraphics[width=\linewidth,height=4in]{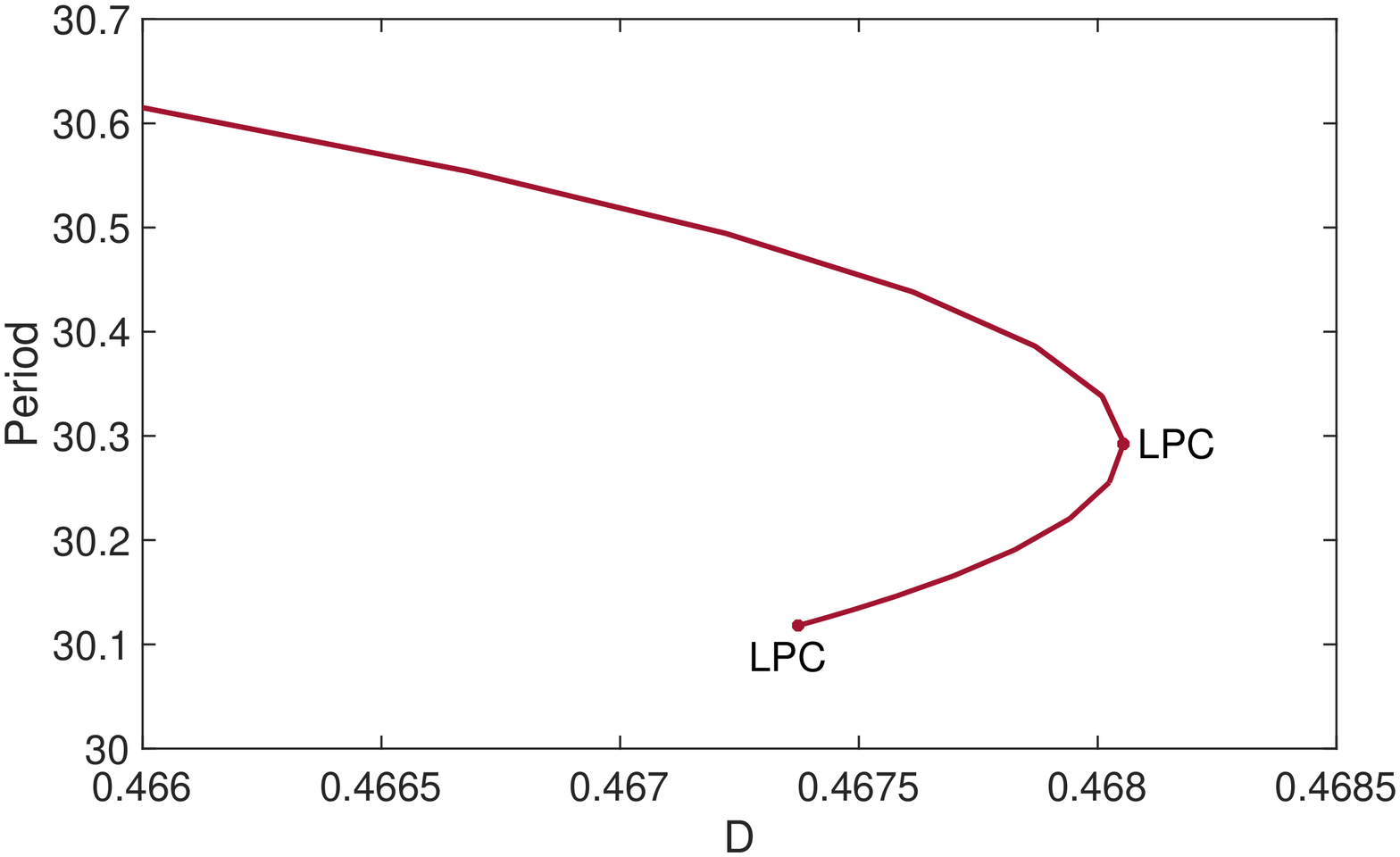}}}
    \caption{The presence of two limit cycles is illustrated in the figures. Left Figure: The stable interior equilibrium point $(0.2277,0.2264)$ is denoted by a solid black circle, while the initial conditions $(0.274,0.228)$, $(0.265,0.24)$, and $(0.20,0.192)$ are represented by solid red, black, and blue squares, respectively. 
    Right Figure: Period of cycles versus the parameter $D$. Limit Point Cycle (LPC) observed at $D=0.4674$ and $D=0.4681$.
    The chosen parameter values for this illustration are as follows: $R=1$, $K=1$, $M=1.2$, $p=2$, $D=0.4676$, $E=0.2$, $A=0.2$, and $C=0.3$.}
	\label{fig:2limit-cycles}
\end{figure}

\begin{figure}[H]
{\scalebox{0.49}[0.49]{
			\includegraphics[width=\linewidth,height=4in]{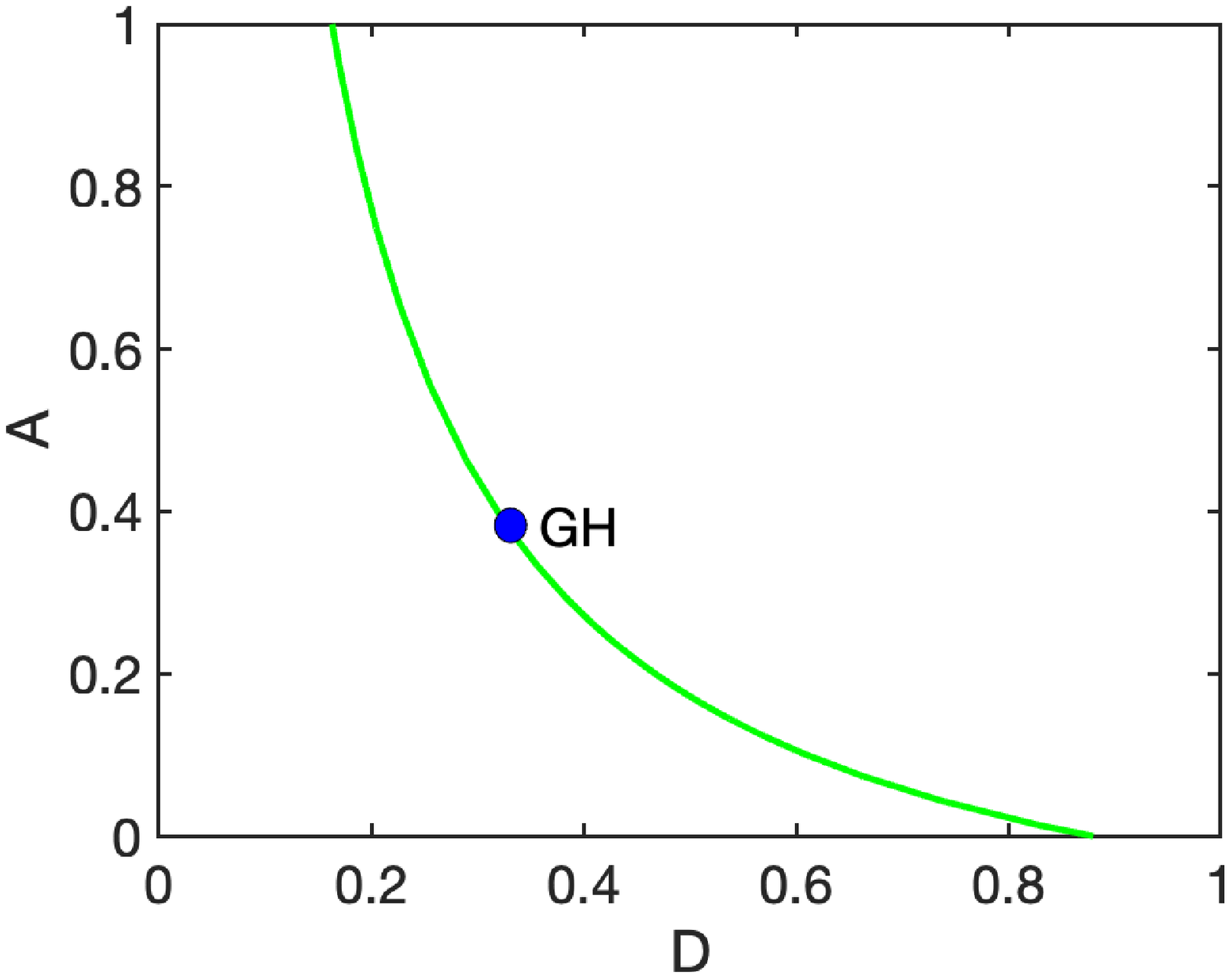}}}
	{\scalebox{0.49}[0.49]{
			\includegraphics[width=\linewidth,height=4in]{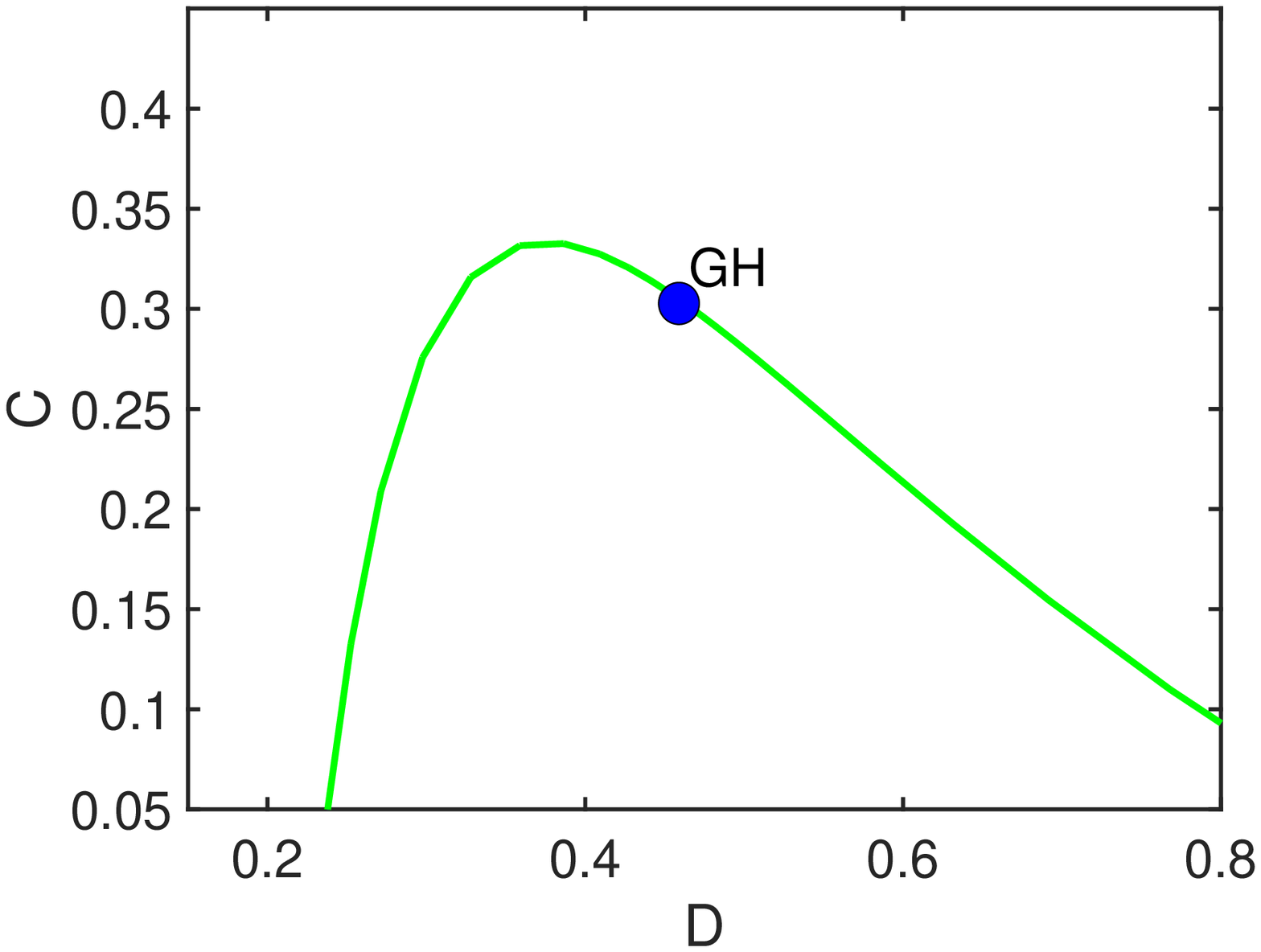}}}
   \centering
{\scalebox{0.49}[0.49]{
			\includegraphics[width=\linewidth,height=4in]{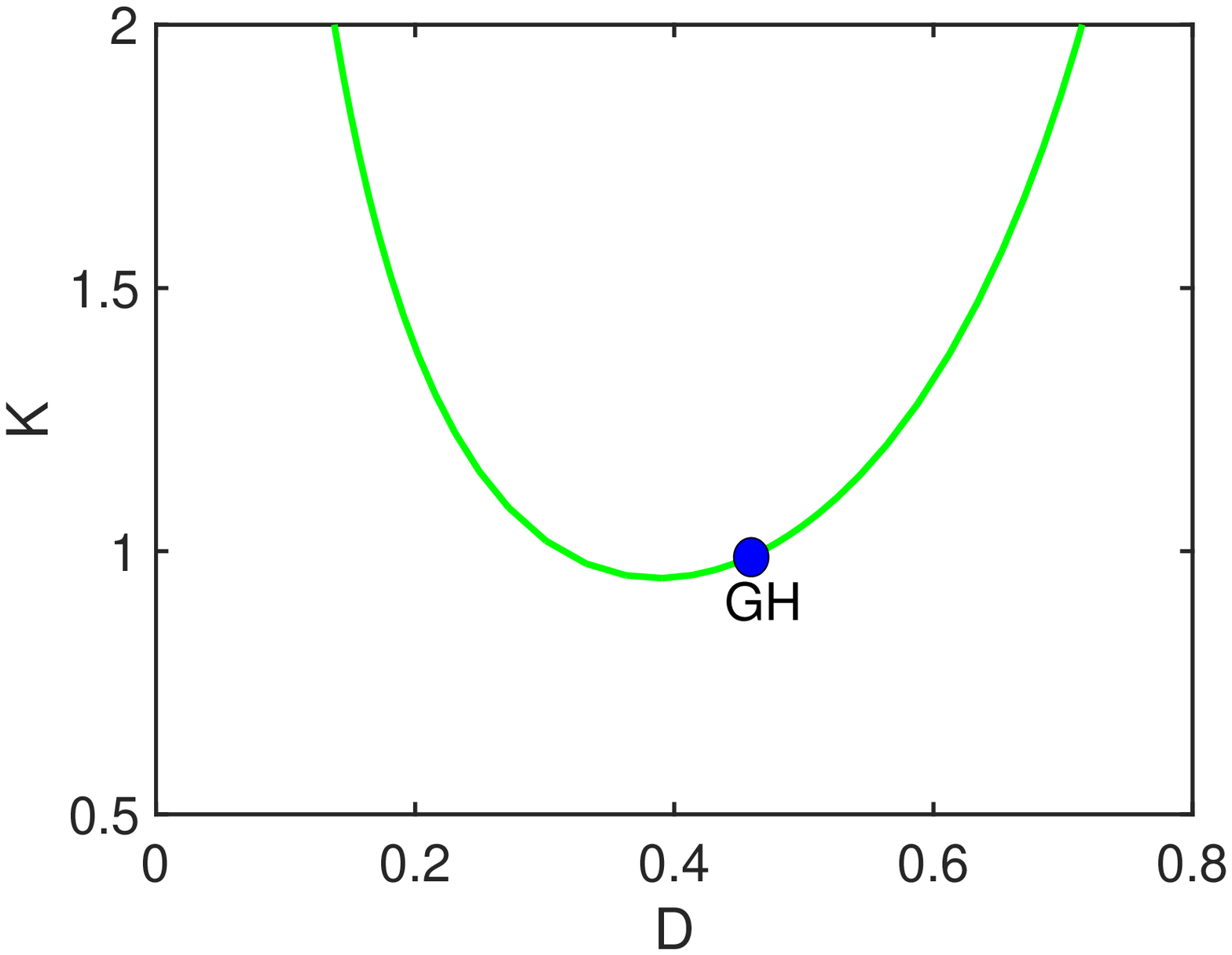}}}   
    \caption{Two parameter bifurcation plots showing Bautin bifurcation. Left Figure: $(D,A)=(0.3302,0.3777)$. Right Figure: $(D,C)=(0.4576,0.3055)$. Bottom Figure:  $(D,K)=(0.4699,0.9910)$. The chosen parameter values for this illustration are as follows: $R=1$, $K=1$, $M=1.2$, $p=2$, $E=0.2$, $A=0.2$, and $C=0.3$.}
	\label{fig:generalized-hopf}
\end{figure}

\begin{Remark}\label{remark:LPC}
In Figure \ref{fig:2limit-cycles}, the initial conditions emanating from the solid blue and black squares exhibit convergence towards an outer stable limit cycle, whereas the initial conditions from black and red squares display movement away from an inner unstable limit cycle. To visualize the limit point cycle (LPC) in an alternate way, we show a curve with limit points clearly indicating the existence of two limit cycles for $D<0.4681$. Additionally, the normal form coefficient of the LPC derived with the aid of MATCONT is $-4.01\times 10^{-2}\neq 0$, thus the limit cycle has a fold.
\end{Remark}

In these situations the basin structure would be richer. The basin for the interior (stable) equilibrium would be small, and larger data would be attracted to the outer limit cycle. Thus bi-stable behavior, between a stable equilibrium and a stable limit cycle is possible, with very large data leading to blow-up solutions.

\section{Numerical Simulations}
We performed a time series simulation of the mathematical model given by \eqref{model} and \eqref{delay_model} using specific parameter values: $R=1$, $K=1$, $M=1.2$, $p=2$, $D=0.5$, $E=0.2$, $A=0.2$, and $C=0.3$.

For the non-delay model \eqref{model}, with initial data of $[X(0),Y(0)]=[78,30]$, we observed that the predator population experiences blow-up to infinity in a finite time of $T^* \approx 6.789603\times 10^{-2}$. We ensured that the chosen parameter values satisfied the requirements for a blow-up, as stipulated by Theorem \ref{ode_blowup}. The numerical time series simulation was generated using \verb|MATLAB|'s standard solver for ordinary differential equations, ode45.

For the delayed model with the same parameter values as the non-delay model and an additional delay parameter $\tau=1$, we found that the predator population explodes to infinity in a finite time of $T^* \approx 2.029316\times 10^{-2}$ for constant positive initial data of $[100,100]$ (See Figure \eqref{fig:blowup_delay}). We ensured that the selected parameter values met the requirements for a blow-up, as prescribed by Theorem \ref{thm2dm}. The time series simulation was produced using dde23, which is \verb|MATLAB|'s solver for delay differential equations (DDEs) with constant delays.

We conducted numerical investigations to analyze the region of attraction or where the solution is bounded. Figure~\ref{fig:blowup_ic} displays the results, where colored regions indicate two possible outcomes. The red region represents a non-blowup case, whereas the blue region represents a blow-up case if we choose initial conditions from respective regions. The simulations were conducted using the non-delayed model \eqref{model} with specific parameter values: $R=1$, $K=1$, $M=1.2$, $p=2$, $D=0.5$, $E=0.2$, $A=0.2$, and $C=0.3$. We performed the simulations $9\times 10^4$ times, picking an initial data point from a mesh grid where $0\le X(0), Y(0)\le 100$.

\section{Conclusion}

The current manuscript aims to provide commentary on the dynamics of the predator-prey model introduced in \cite{patra2022effect}. The authors establish several stability and boundedness results for both the delayed and non-delayed models under certain parameter restrictions. However, our findings reveal that the solutions to \eqref{model} are actually unbounded and can even exhibit finite-time blow-up for sufficiently large data, contrary to the specified parameter restrictions of Theorem 3.2 \cite{patra2022effect}. We demonstrate this through Theorem \ref{thm:thm2}. Consequently, the positive quadrant $\mathbb{R}^{2}$ is not invariant for system \eqref{model}, as shown in Corollary \ref{cor:bdd_patra11}.

In addition, the authors introduce a delay in the predator equation, motivated by the gestation period, which ensures global in-time existence, as demonstrated in Theorem \ref{delay_ode_blowup}.

Furthermore, we introduce the concept of quenching in finite time for the prey species. Subsequently, we propose that the predator's blow-up time and the prey's quenching time are equivalent. For the non-delayed model, this is presented in Theorem \ref{conj:quench1} and validated numerically in Remark \ref{rem:quen1} and Figure \ref{fig:quench}. Similarly, for the delayed model, this is shown via Theorem \ref{conj:quench2} and numerically validated in Remark \ref{rem:quen2} and Figure \ref{fig:quench_delay}. To validate our analytical findings, we conduct several numerical simulations, as depicted in Figs [\ref{fig:blowup},\ref{fig:blowup_ic},\ref{fig:quench},\ref{fig:blowup_delay},\ref{fig:quench_delay},\ref{fig:blowup_non_delay_f1},\ref{fig:blowup_non_delay_f1_domain},\ref{fig:blowup_delay_f1},\ref{fig:blowup_phi}]. To the best of our knowledge, this is the first instance of \emph{simultaneous} blow-up and quenching reported in the predator-prey literature. It is also one of the first instances where a smooth RHS can lead to quenching. To this end the finite time blow-up dynamic of the predator is exploited. The idea of using blow-up solutions to mimic the explosive increase of invasive species has been considered in the literature \cite{PQB16}. This has been seen to fit population patterns where the explosive increase of invasive Burmese Pythons in the Florida Everglades, has resulted in the sharp decline of mammalian prey \cite{DM12}. To this end the simultaneous blow-up and quenching models introduced in the current work may be more apt, as these would not only catch the sharp population increase in predator populations, but the sharp decrease in their prey populations as well. Furthermore, as quenching requires blow-up in the time derivative (while the state variable stays bounded and positive) such a model could depict the sharp decline of prey to very low (but still positive) levels, such that they fall below almost any sensitive (and endangered) levels. To this end fitting some of these data sets via the current models would make for an interesting future endeavor.

Furthermore, we provide numerical evidence that the linear feedback control proposed in \cite{patra2022effect} is inadequate in preventing blow-ups for both delayed and non-delayed models. This is demonstrated numerically in Remarks \ref{rem:feed1} and \ref{rem:feed2}, as well as Figs [\ref{fig:blowup_non_delay_f1},\ref{fig:blowup_delay_f1}]. In \cite{patra2022effect}, the authors proposed and proved the asymptotic stability of the interior solution for the delayed linear feedback control model under specific parameter restrictions. However, we disprove their claim by providing a counterexample, as illustrated numerically in Remark \ref{rem:feed2} and Figure \ref{fig:blowup_delay_f1}.

The discussion on the basin of attraction is comprehensive, incorporating the support of various theorems (refer to Theorems \ref{thm:stable_int} and \ref{thm:hopf-bif}). To provide a practical understanding, we present an illustrative example (refer to Example \ref{exam1}). In \cite{patra2022effect}, the presence of multiple concurrent limit cycles was initially observed by the authors. Expanding upon their findings, we present several conjectures (refer to Conjectures \ref{con3}, \ref{con4}, and \ref{con5}). To validate these conjectures numerically, we conducted simulations and obtained compelling results (refer to Remark \ref{remark:LPC}, and Figures \ref{fig:2limit-cycles} and \ref{fig:generalized-hopf}). These simulations involved systematic variations of either one or two parameters, providing evidence for the occurrence of two concurrent limit cycles. Note, the occurrence of multiple limit cycles for both Gauss type as well as Holling-Tanner type predator prey models has been shown in the literature \cite{HS89,SG99}. However, these models all possess globally existing bounded solutions, for all positive initial conditions and across all parameter regimes. However, this is not the case in the current class of models, where the top predator is modeled via a modified Leslie-Gower scheme. In this setting we know via Theorem \ref{thm:thm2} that finite time blow-up is possible. Thus the dynamics of the system are richer than the basic Gauss type as well as Holling-Tanner type predator prey models. Herein one could it would be pertinent to ask the maximum radius of the outer (stable) limit cycle. 

Ecologically speaking one could see bi-stability between the interior equilibrium and the stable outer limit cycle, see Fig. \ref{fig:2limit-cycles}, where the predator and prey populations would go between stable levels to cyclical populations. Note further one could see blow-up solutions if initial conditions were very far from the outer cycle. To this end one could conjecture a the possible existence of a third larger cycle encircling the outer stable one - and thus possible changes to the structure of the blow-up boundary seen in Fig. \ref{fig:blowup_ic}. Currently the blow-up boundary is approximated numerically, see Fig \ref{fig:blowup_phi}, using a function of the form $\phi(x) = \frac{ax}{bx-c}$, which follows from estimates in Theorem \ref{thm:thm2}. Later analysis shows fits of the form $\phi(x) = \frac{1}{b \log(cx)}$ might also be feasible. The exact sufficient (and necessary) conditions to compute the blow-up boundary for these class of model systems remains open.

Overall, one must use extreme caution in deriving global in time existence results, in predator-prey systems when the top predator is modeled via the modified Leslie-Gower scheme. It is more apt to derive restrictions on the initial data such that solutions remain bounded. Also, one can look at the various ecological mechanisms that could prevent blow-up solutions, for initial conditions that would indeed lead to blow-up without these mechanisms.

\section*{Conflict of interest}
\noindent The authors declare there is no conflict of interest in this paper.


\begin{thebibliography}{25}
\ifx \bisbn   \undefined \def \bisbn  #1{ISBN #1}\fi
\ifx \binits  \undefined \def \binits#1{#1}\fi
\ifx \bauthor  \undefined \def \bauthor#1{#1}\fi
\ifx \batitle  \undefined \def \batitle#1{#1}\fi
\ifx \bjtitle  \undefined \def \bjtitle#1{#1}\fi
\ifx \bvolume  \undefined \def \bvolume#1{\textbf{#1}}\fi
\ifx \byear  \undefined \def \byear#1{#1}\fi
\ifx \bissue  \undefined \def \bissue#1{#1}\fi
\ifx \bfpage  \undefined \def \bfpage#1{#1}\fi
\ifx \blpage  \undefined \def \blpage #1{#1}\fi
\ifx \burl  \undefined \def \burl#1{\textsf{#1}}\fi
\ifx \doiurl  \undefined \def \doiurl#1{\url{https://doi.org/#1}}\fi
\ifx \betal  \undefined \def \betal{\textit{et al.}}\fi
\ifx \binstitute  \undefined \def \binstitute#1{#1}\fi
\ifx \binstitutionaled  \undefined \def \binstitutionaled#1{#1}\fi
\ifx \bctitle  \undefined \def \bctitle#1{#1}\fi
\ifx \beditor  \undefined \def \beditor#1{#1}\fi
\ifx \bpublisher  \undefined \def \bpublisher#1{#1}\fi
\ifx \bbtitle  \undefined \def \bbtitle#1{#1}\fi
\ifx \bedition  \undefined \def \bedition#1{#1}\fi
\ifx \bseriesno  \undefined \def \bseriesno#1{#1}\fi
\ifx \blocation  \undefined \def \blocation#1{#1}\fi
\ifx \bsertitle  \undefined \def \bsertitle#1{#1}\fi
\ifx \bsnm \undefined \def \bsnm#1{#1}\fi
\ifx \bsuffix \undefined \def \bsuffix#1{#1}\fi
\ifx \bparticle \undefined \def \bparticle#1{#1}\fi
\ifx \barticle \undefined \def \barticle#1{#1}\fi
\bibcommenthead
\ifx \bconfdate \undefined \def \bconfdate #1{#1}\fi
\ifx \botherref \undefined \def \botherref #1{#1}\fi
\ifx \url \undefined \def \url#1{\textsf{#1}}\fi
\ifx \bchapter \undefined \def \bchapter#1{#1}\fi
\ifx \bbook \undefined \def \bbook#1{#1}\fi
\ifx \bcomment \undefined \def \bcomment#1{#1}\fi
\ifx \oauthor \undefined \def \oauthor#1{#1}\fi
\ifx \citeauthoryear \undefined \def \citeauthoryear#1{#1}\fi
\ifx \endbibitem  \undefined \def \endbibitem {}\fi
\ifx \bconflocation  \undefined \def \bconflocation#1{#1}\fi
\ifx \arxivurl  \undefined \def \arxivurl#1{\textsf{#1}}\fi
\csname PreBibitemsHook\endcsname

\bibitem[\protect\citeauthoryear{Patra et~al.}{2022}]{patra2022effect}
\begin{barticle}
\bauthor{\bsnm{Patra}, \binits{R.R.}},
\bauthor{\bsnm{Kundu}, \binits{S.}},
\bauthor{\bsnm{Maitra}, \binits{S.}}:
\batitle{Effect of delay and control on a predator--prey ecosystem with
  generalist predator and group defence in the prey species}.
\bjtitle{The European Physical Journal Plus}
\bvolume{137}(\bissue{1}),
\bfpage{28}
(\byear{2022})
\end{barticle}
\endbibitem

\bibitem[\protect\citeauthoryear{Kuang}{1993}]{Kuang93}
\begin{bbook}
\bauthor{\bsnm{Kuang}, \binits{Y.}}:
\bbtitle{Delay Differential Equations: with Applications in Population
  Dynamics}.
\bpublisher{Academic press},
(\byear{1993})
\end{bbook}
\endbibitem

\bibitem[\protect\citeauthoryear{Cosner et~al.}{1999}]{Cosner99}
\begin{barticle}
\bauthor{\bsnm{Cosner}, \binits{C.}},
\bauthor{\bsnm{DeAngelis}, \binits{D.L.}},
\bauthor{\bsnm{Ault}, \binits{J.S.}},
\bauthor{\bsnm{Olson}, \binits{D.B.}}:
\batitle{Effects of spatial grouping on the functional response of predators}.
\bjtitle{Theoretical population biology}
\bvolume{56}(\bissue{1}),
\bfpage{65}--\blpage{75}
(\byear{1999})
\end{barticle}
\endbibitem

\bibitem[\protect\citeauthoryear{Upadhyay et~al.}{1998}]{Upa-Iye-Rai}
\begin{barticle}
\bauthor{\bsnm{Upadhyay}, \binits{R.K.}},
\bauthor{\bsnm{Iyengar}, \binits{S.}},
\bauthor{\bsnm{Rai}, \binits{V.}}:
\batitle{Chaos: an ecological reality?}
\bjtitle{International Journal of Bifurcation and Chaos}
\bvolume{8}(\bissue{06}),
\bfpage{1325}--\blpage{1333}
(\byear{1998})
\end{barticle}
\endbibitem

\bibitem[\protect\citeauthoryear{Hastings and Powell}{1991}]{HP91}
\begin{barticle}
\bauthor{\bsnm{Hastings}, \binits{A.}},
\bauthor{\bsnm{Powell}, \binits{T.}}:
\batitle{Chaos in a three-species food chain}.
\bjtitle{Ecology}
\bvolume{72}(\bissue{3}),
\bfpage{896}--\blpage{903}
(\byear{1991})
\end{barticle}
\endbibitem

\bibitem[\protect\citeauthoryear{Aziz-Alaoui}{2002}]{Ala}
\begin{barticle}
\bauthor{\bsnm{Aziz-Alaoui}, \binits{M.}}:
\batitle{Study of a leslie--gower-type tritrophic population model}.
\bjtitle{Chaos, Solitons \& Fractals}
\bvolume{14}(\bissue{8}),
\bfpage{1275}--\blpage{1293}
(\byear{2002})
\end{barticle}
\endbibitem

\bibitem[\protect\citeauthoryear{Parshad et~al.}{2015}]{Par-Kum-Kou}
\begin{barticle}
\bauthor{\bsnm{Parshad}, \binits{R.D.}},
\bauthor{\bsnm{Kumari}, \binits{N.}},
\bauthor{\bsnm{Kouachi}, \binits{S.}}:
\batitle{A remark on “study of a leslie--gower-type tritrophic population
  model”[chaos, solitons and fractals 14 (2002) 1275--1293]}.
\bjtitle{Chaos, Solitons \& Fractals}
\bvolume{71},
\bfpage{22}--\blpage{28}
(\byear{2015})
\end{barticle}
\endbibitem

\bibitem[\protect\citeauthoryear{Parshad et~al.}{2017}]{Par-Qua-Bea-Kou}
\begin{barticle}
\bauthor{\bsnm{Parshad}, \binits{R.D.}},
\bauthor{\bsnm{Quansah}, \binits{E.}},
\bauthor{\bsnm{Beauregard}, \binits{M.A.}},
\bauthor{\bsnm{Kouachi}, \binits{S.}}:
\batitle{On “small” data blow-up in a three species food chain model}.
\bjtitle{Computers \& Mathematics with Applications}
\bvolume{73}(\bissue{4}),
\bfpage{576}--\blpage{587}
(\byear{2017})
\end{barticle}
\endbibitem

\bibitem[\protect\citeauthoryear{Parshad et~al.}{2016}]{PQB16}
\begin{barticle}
\bauthor{\bsnm{Parshad}, \binits{R.D.}},
\bauthor{\bsnm{Quansah}, \binits{E.}},
\bauthor{\bsnm{Black}, \binits{K.}},
\bauthor{\bsnm{Beauregard}, \binits{M.}}:
\batitle{Biological control via “ecological” damping: an approach that
  attenuates non-target effects}.
\bjtitle{Mathematical biosciences}
\bvolume{273},
\bfpage{23}--\blpage{44}
(\byear{2016})
\end{barticle}
\endbibitem

\bibitem[\protect\citeauthoryear{Verma et~al.}{2021}]{VAP21}
\begin{barticle}
\bauthor{\bsnm{Verma}, \binits{H.}},
\bauthor{\bsnm{Antwi-Fordjour}, \binits{K.}},
\bauthor{\bsnm{Hossain}, \binits{M.}},
\bauthor{\bsnm{Pal}, \binits{N.}},
\bauthor{\bsnm{Parshad}, \binits{R.D.}},
\bauthor{\bsnm{Mathur}, \binits{P.}}:
\batitle{A “double” fear effect in a tri-trophic food chain model}.
\bjtitle{The European Physical Journal Plus}
\bvolume{136},
\bfpage{1}--\blpage{17}
(\byear{2021})
\end{barticle}
\endbibitem

\bibitem[\protect\citeauthoryear{Misra and Raveendra~Babu}{2016}]{MR16}
\begin{barticle}
\bauthor{\bsnm{Misra}, \binits{O.}},
\bauthor{\bsnm{Raveendra~Babu}, \binits{A.}}:
\batitle{Mathematical study of a leslie--gower-type tritrophic population model
  in a polluted environment}.
\bjtitle{Modeling earth systems and environment}
\bvolume{2},
\bfpage{1}--\blpage{11}
(\byear{2016})
\end{barticle}
\endbibitem

\bibitem[\protect\citeauthoryear{Debnath et~al.}{2020}]{DS20}
\begin{barticle}
\bauthor{\bsnm{Debnath}, \binits{S.}},
\bauthor{\bsnm{Ghosh}, \binits{U.}},
\bauthor{\bsnm{Sarkar}, \binits{S.}}:
\batitle{Global dynamics of a tritrophic food chain model subject to the allee
  effects in the prey population with sexually reproductive generalized-type
  top predator}.
\bjtitle{Computational and Mathematical Methods}
\bvolume{2}(\bissue{2}),
\bfpage{1079}
(\byear{2020})
\end{barticle}
\endbibitem

\bibitem[\protect\citeauthoryear{Quittner and Souplet}{2007}]{Qui-Soup}
\begin{bbook}
\bauthor{\bsnm{Quittner}, \binits{P.}},
\bauthor{\bsnm{Souplet}, \binits{P.}}:
\bbtitle{Superlinear Parabolic Problems: Blow-up, Global Existence and Steady
  States}.
\bpublisher{Birkh{\"a}user Basel},
(\byear{2007})
\end{bbook}
\endbibitem

\bibitem[\protect\citeauthoryear{Friedman}{2008}]{Fri}
\begin{bbook}
\bauthor{\bsnm{Friedman}, \binits{A.}}:
\bbtitle{Partial Differential Equations of Parabolic Type}.
\bpublisher{Courier Dover Publications},
(\byear{2008})
\end{bbook}
\endbibitem

\bibitem[\protect\citeauthoryear{Henry}{2006}]{Hen}
\begin{bbook}
\bauthor{\bsnm{Henry}, \binits{D.}}:
\bbtitle{Geometric Theory of Semilinear Parabolic Equations}
vol. \bseriesno{840}.
\bpublisher{Springer},
(\byear{2006})
\end{bbook}
\endbibitem

\bibitem[\protect\citeauthoryear{Pazy}{2012}]{Paz}
\begin{bbook}
\bauthor{\bsnm{Pazy}, \binits{A.}}:
\bbtitle{Semigroups of Linear Operators and Applications to Partial
  Differential Equations}
vol. \bseriesno{44}.
\bpublisher{Springer},
(\byear{2012})
\end{bbook}
\endbibitem

\bibitem[\protect\citeauthoryear{Smoller}{2012}]{Smo}
\begin{bbook}
\bauthor{\bsnm{Smoller}, \binits{J.}}:
\bbtitle{Shock Waves and Reaction—diffusion Equations}
vol. \bseriesno{258}.
\bpublisher{Springer}, 
(\byear{2012})
\end{bbook}
\endbibitem

\bibitem[\protect\citeauthoryear{Rothe}{1984}]{Rot}
\begin{barticle}
\bauthor{\bsnm{Rothe}, \binits{F.}}:
\batitle{Global-solutions of reaction-diffusion systems}.
\bjtitle{Lecture notes in mathematics}
\bvolume{1072},
\bfpage{1}--\blpage{214}
(\byear{1984})
\end{barticle}
\endbibitem

\bibitem[\protect\citeauthoryear{Levine}{1983}]{Levine83}
\begin{barticle}
\bauthor{\bsnm{Levine}, \binits{H.A.}}:
\batitle{The quenching of solutions of linear parabolic and hyperbolic
  equations with nonlinear boundary conditions}.
\bjtitle{SIAM journal on mathematical analysis}
\bvolume{14}(\bissue{6}),
\bfpage{1139}--\blpage{1153}
(\byear{1983})
\end{barticle}
\endbibitem

\bibitem[\protect\citeauthoryear{Beauregard and Selcuk}{2022}]{Beauregard22}
\begin{botherref}
\oauthor{\bsnm{Beauregard}, \binits{M.A.}},
\oauthor{\bsnm{Selcuk}, \binits{B.}}:
Quenching estimates for a non-newtonian filtration equation with singular
  boundary conditions.
Tamkang Journal of Mathematics
(2022)
\end{botherref}
\endbibitem

\bibitem[\protect\citeauthoryear{Parshad et~al.}{2017}]{rana2017}
\begin{barticle}
\bauthor{\bsnm{Parshad}, \binits{R.D.}},
\bauthor{\bsnm{Bhowmick}, \binits{S.}},
\bauthor{\bsnm{Quansah}, \binits{E.}},
\bauthor{\bsnm{Agrawal}, \binits{R.}},
\bauthor{\bsnm{Upadhyay}, \binits{R.K.}}:
\batitle{Finite time blow-up in a delayed diffusive population model with
  competitive interference}.
\bjtitle{International Journal of Nonlinear Sciences and Numerical Simulation}
\bvolume{18}(\bissue{5}),
\bfpage{435}--\blpage{450}
(\byear{2017})
\end{barticle}
\endbibitem

\bibitem[\protect\citeauthoryear{Parshad et~al.}{2016}]{upadhyay2016}
\begin{barticle}
\bauthor{\bsnm{Parshad}, \binits{R.D.}},
\bauthor{\bsnm{Bhowmick}, \binits{S.}},
\bauthor{\bsnm{Quansah}, \binits{E.}},
\bauthor{\bsnm{Basheer}, \binits{A.}},
\bauthor{\bsnm{Upadhyay}, \binits{R.K.}}:
\batitle{Predator interference effects on biological control: The “paradox”
  of the generalist predator revisited}.
\bjtitle{Communications in Nonlinear Science and Numerical Simulation}
\bvolume{39},
\bfpage{169}--\blpage{184}
(\byear{2016})
\end{barticle}
\endbibitem

\bibitem[\protect\citeauthoryear{Dorcas et~al.}{2012}]{DM12}
\begin{barticle}
\bauthor{\bsnm{Dorcas}, \binits{M.E.}},
\bauthor{\bsnm{Willson}, \binits{J.D.}},
\bauthor{\bsnm{Reed}, \binits{R.N.}},
\bauthor{\bsnm{Snow}, \binits{R.W.}},
\bauthor{\bsnm{Rochford}, \binits{M.R.}},
\bauthor{\bsnm{Miller}, \binits{M.A.}},
\bauthor{\bsnm{Meshaka~Jr}, \binits{W.E.}},
\bauthor{\bsnm{Andreadis}, \binits{P.T.}},
\bauthor{\bsnm{Mazzotti}, \binits{F.J.}},
\bauthor{\bsnm{Romagosa}, \binits{C.M.}}, \betal:
\batitle{Severe mammal declines coincide with proliferation of invasive burmese
  pythons in everglades national park}.
\bjtitle{Proceedings of the National Academy of Sciences}
\bvolume{109}(\bissue{7}),
\bfpage{2418}--\blpage{2422}
(\byear{2012})
\end{barticle}
\endbibitem

\bibitem[\protect\citeauthoryear{Hofbauer and So}{1989}]{HS89}
\begin{botherref}
\oauthor{\bsnm{Hofbauer}, \binits{J.}},
\oauthor{\bsnm{So}, \binits{J.W.H.}}:
Multiple limit cycles for predator-prey models
(1989)
\end{botherref}
\endbibitem

\bibitem[\protect\citeauthoryear{S{\'a}ez and
  Gonz{\'a}lez-Olivares}{1999}]{SG99}
\begin{barticle}
\bauthor{\bsnm{S{\'a}ez}, \binits{E.}},
\bauthor{\bsnm{Gonz{\'a}lez-Olivares}, \binits{E.}}:
\batitle{Dynamics of a predator-prey model}.
\bjtitle{SIAM Journal on Applied Mathematics}
\bvolume{59}(\bissue{5}),
\bfpage{1867}--\blpage{1878}
(\byear{1999})
\end{barticle}
\endbibitem

\end{thebibliography}

\appendix
\section{Stability Analysis of Non-Delayed Model}
In this section, we shall investigate the existence of biologically feasible equilibria of the model \eqref{model}. Also, we will analyze the dynamics of the model around these equilibria. 

\subsection{Existence of Equilibrium Points}
From model \eqref{model}, there are three biologically feasible equilibria, namely
\begin{itemize}
    \item[(i)] The extinction (or population-free) equilibrium point is given by ${E_0}=(0,0)$.
    \item[(ii)] The predator-free (or axial) equilibrium point is given by ${E_1}=(K,0)$.
    \item[(iii)] The unique interior equilibrium (or coexistence) point is given by
\[{E_2}=(X^*,Y^*)=\left(\frac{E-A D}{D},-\frac{\frac{C R (E-A D)}{D}-K R \left(\frac{E-A D}{D}\right)^p+R \left(\frac{E-A D}{D}\right)^{p+1}-C K R}{K M}\right),\]
where $X^*$ and $Y^*$ are the positive solutions of the prey and predator nullclines:
\begin{align*}
0&=R \left(1-\displaystyle\frac{X}{K}\right) - \dfrac{M  Y}{X^p + C}\vspace{2ex}\\
0&=\left( D- \dfrac{E}{X + A} \right).
\end{align*}
\end{itemize}

\subsection{Local Stability Guidelines}
In this subsection, we shall investigate the possible parametric restrictions for the local stability of the model dynamics around each equilibrium point with the aid of the Jacobian matrix.
The Jacobian matrix is given by
\[
J=\left[
\begin{array}{cc}
 \frac{M Y \left((p-1) X^p-C\right)}{\left(C+X^p\right)^2}-\frac{2 R X}{K}+R & -\frac{M X}{C+X^p} \\
 \frac{E Y^2}{(A+X)^2} & 2 Y \left(D-\frac{E}{A+X}\right) \\
\end{array}
\right].
\]

Next, we will investigate the possible stability criteria around each equilibrium point.

\subsubsection{Extinction Equilibrium}
The Jacobian matrix evaluated at the extinction equilibrium ($J_{E_0}$) becomes

\[
J_{E_0}=\left[
\begin{array}{cc}
R &   0\\
 0 &  0 \\
\end{array}
\right],
\] 

where

\begin{equation}\label{eqn: extinct-dettr}
    \det (J_{E_0})=0 \quad \text{and} \quad \tr (J_{E_0})=R.
\end{equation}

\noindent Clearly, the extinction equilibrium point is a saddle based on equation \eqref{eqn: extinct-dettr}.

\subsubsection{Predator-free Equilibrium}
The Jacobian matrix evaluated at the predator-free equilibrium ($J_{E_1}$) becomes

\[
J_{E_1}=\left[
\begin{array}{cc}
-R &   -\frac{M K}{C+K^p}\\
 0 &  0 \\
\end{array}
\right],
\] 

where

\begin{equation}\label{eqn: pred-free-dettr}
    \det (J_{E_1})=0 \quad \text{and} \quad \tr (J_{E_1})=-R.
\end{equation}

\noindent The stability of the predator-free equilibrium cannot be determined from the information given in equation \eqref{eqn: pred-free-dettr} via linearization. Here, the equilibrium point is said to be degenerate. The stability of the predator-free equilibrium can be determined by using the center manifold theory.

\subsubsection{Interior Equilibrium}
The Jacobian matrix evaluated at the interior equilibrium ($J_{E_2}$) becomes

\[
J_{E_2}=\left[
\begin{array}{cc}
  J^*_{11} &   J^*_{12}\\
 J^*_{21} &  J^*_{22} \\
\end{array}
\right],
\]  

where 
\begin{align*}
    J^*_{11}&= \frac{C R (A D-E)+R \left(\frac{E}{D}-A\right)^p (A D (p+1)+D K p-E (p+1))}{D K \left(\left(\frac{E}{D}-A\right)^p+C\right)} \\
    J^*_{12}&= -\frac{M (E-A D)}{D \left(\left(\frac{E}{D}-A\right)^p+C\right)}<0\\
    J^*_{21}&=\frac{R^2 (E-D (A+K))^2 \left(\left(\frac{E}{D}-A\right)^p+C\right)^2}{E K^2 M^2}>0 \\
    J^*_{22}&=0. \\
\end{align*}
Now, we note that
\begin{equation}\label{eqn:det}
\det J_{E_2}=-J^*_{12}J^*_{21}=\frac{R^2 (E-A D) (A D+D K-E)^2 \left(\left(\frac{E}{D}-A\right)^p+C\right)}{D E K^2 M}>0
\end{equation}

and

\begin{equation}
\tr J_{E_2}=J^*_{11}= \frac{C R (A D-E)+R \left(\frac{E}{D}-A\right)^p (A D (p+1)+D K p-E (p+1))}{D K \left(\left(\frac{E}{D}-A\right)^p+C\right)}. 
\end{equation}

\noindent The determinant of the interior equilibrium is always positive from equation \eqref{eqn:det} whiles the trace can assume either a positive or a negative number.

\end{document}